\theoremstyle{plain}
\newtheorem{mythm}{Theorem}[section]
\newtheorem{mylemma}[mythm]{Lemma}
\newtheorem{myprop}[mythm]{Proposition}
\newtheorem{mycor}[mythm]{Corollary}
\theoremstyle{definition}
\newtheorem{mydef}[mythm]{Definition}
\newtheorem{myex}[mythm]{Example}
\newtheorem{myrmk}[mythm]{Remark}
\newcommand{\mf}[1]{\ensuremath{\mathfrak{#1}}}
\newcommand{\mb}[1]{\ensuremath{\mathbb{#1}}}
\newcommand{\mbf}[1]{\ensuremath{\mathbf{#1}}}
\newcommand{\ra}{\rightarrow}
\newcommand{\epi}{\twoheadrightarrow}
\newcommand{\mono}{\hookrightarrow}
\newcommand{\Ima}{\ensuremath{{\rm Im}}}
\newcommand{\N}{\mb{N}}
\newcommand{\Z}{\mb{Z}}
\newcommand{\Q}{\mb{Q}}
\newcommand{\R}{\mb{R}}
\newcommand{\C}{\mb{C}}
\newcommand{\F}{\mb{F}}
\newcommand{\sgn}{\ensuremath{{\rm sgn}}}
\newcommand{\End}{\ensuremath{{\rm End}}}
\newcommand{\Hom}{\ensuremath{{\rm Hom}}}
\newcommand{\Gal}{\ensuremath{{\rm Gal}}}
\newcommand{\tr}{\ensuremath{{\rm tr}}}
\newcommand{\charac}{\ensuremath{{\rm char\:}}}
\newcommand{\pl}{\ensuremath{{\rm pl}}}
\newcommand{\pf}{\ensuremath{{\rm pf}}}
\newcommand{\Map}{\ensuremath{{\rm Map}}}
\newcommand{\Spec}{\ensuremath{{\rm Spec}}}
\newcommand{\FS}{\ensuremath{{\rm FS}}}
\newcommand{\FG}{\ensuremath{{\rm FG}}}
\newcommand{\FI}{\ensuremath{{\rm FI}}}
\newcommand{\FFS}{\ensuremath{{\rm FFS}}}
\newcommand{\FFG}{\ensuremath{{\rm FFG}}}
\newcommand{\Ad}{\ensuremath{{\rm Ad}}}
\begin{document}

\title{Pseudocharacters of Classical Groups}
\author{Matthew Weidner\textsuperscript{1}\textsuperscript{*}}
\subjclass[2010]{20G05 (Primary) 13A50 (Secondary)}
\date{\today}
\address{\textsuperscript{1} Department of Mathematics, California Institute of Technology, Pasadena.}
\email{mweidner@alumni.caltech.edu}
\setcounter{Maxaffil}{0}
\renewcommand\Affilfont{\itshape\small}
\maketitle
\vspace{-10pt}

\footnotetext{\textsuperscript{*} Supported by Caltech's Samuel P.\ and Frances Krown SURF Fellowship.}

\begin{abstract}
A $GL_d$-pseudocharacter is a function from a group $\Gamma$ to a ring $k$ satisfying polynomial relations which make it ``look like'' the character of a representation.  When $k$ is an algebraically closed field, Taylor proved that $GL_d$-pseudocharacters of $\Gamma$ are the same as degree-$d$ characters of $\Gamma$ with values in $k$, hence are in bijection with equivalence classes of semisimple representations $\Gamma \ra GL_d(k)$.  Recently, V.\ Lafforgue generalized this result by showing that, for any connected reductive group $H$ over an algebraically closed field $k$ of characteristic 0 and for any group $\Gamma$, there exists an infinite collection of functions and relations which are naturally in bijection with $H^0(k)$-conjugacy classes of semisimple representations $\Gamma \ra H(k)$.  In this paper, we reformulate Lafforgue's result in terms of a new algebraic object called an $\FFG$-algebra.  We then define generating sets and generating relations for these objects and show that, for all $H$ as above, the corresponding $\FFG$-algebra is finitely presented.  Hence we can always define $H$-pseudocharacters consisting of finitely many functions satisfying finitely many relations.  Next, we use invariant theory to give explicit finite presentations of the $\FFG$-algebras for (general) orthogonal groups, (general) symplectic groups, and special orthogonal groups.  Finally, we use our pseudocharacters to answer questions about conjugacy vs.\ element-conjugacy of representations, following Larsen.
\end{abstract}

\section{Introduction}
Pseudocharacters were originally introduced for $GL_2$ by Wiles \cite{wiles} and generalized to $GL_n$ by Taylor \cite{taylor}.  Taylor's result on $GL_n$-pseudocharacters is as follows.  Let $\Gamma$ be a group and $k$ be a commutative ring with identity.  Define a \emph{$GL_n$-pseudocharacter of $\Gamma$ over $k$} to be a set map $T: \Gamma \ra k$ such that
\begin{itemize}
  \item $T(1) = n$
  \item For all $\gamma_1, \gamma_2 \in \Gamma$, $T(\gamma_1\gamma_2) = T(\gamma_2\gamma_1)$
  \item For all $\gamma_1, \dots, \gamma_{n+1} \in \Gamma$,
  \begin{equation}\label{gl_n_relation}
  \sum_{\sigma \in S_{n+1}} \sgn(\sigma) T_\sigma(\gamma_1, \dots, \gamma_{n+1}) = 0,
  \end{equation}
  where $S_{n+1}$ is the symmetric group on $n+1$ letters, $\sgn(\sigma)$ is the permutation sign of $\sigma$, and $T_\sigma$ is defined by
  \[
  T_\sigma(\gamma_1, \dots, \gamma_{n+1}) = T(\gamma_{i_1^{(1)}} \cdots \gamma_{i_{r_1}^{(1)}}) \cdots T(\gamma_{i_1^{(s)}} \cdots \gamma_{i_{r_s}^{(s)}})
  \]
  when $\sigma$ has cycle decomposition $(i_1^{(1)} \dots i_{r_1}^{(1)}) \dots (i_1^{(s)} \dots i_{r_s}^{(s)})$.
\end{itemize}
If $T$ is a $GL_n$-pseudocharacter, then define the kernel of $T$ by
\[
\ker(T) = \{\eta \in \Gamma | \mbox{$T(\gamma\eta) = T(\gamma)$ for all $\gamma \in \Gamma$}\}.
\]
Then:

\begin{mythm}[{\cite[Theorem 1]{taylor}}]
\begin{enumerate}
  \item Let $\rho: \Gamma \ra GL_n(k)$ be a representation.  Then $\tr(\rho)$ is a $GL_n$-pseudocharacter.
  \item Suppose $k$ is a field of characteristic 0, and let $\rho: \gamma \ra GL_n(k)$ be a representation.  Then $\ker(\tr(\rho)) = \ker(\rho^{ss})$, where $\rho^{ss}$ denotes the semisimplification of $\rho$.
  \item Suppose $k$ is an algebraically closed field of characteristic 0.  Let $T: \gamma \ra k$ be a $GL_n$-pseudocharacter.  Then there is a semisimple representation $\rho: \Gamma \ra GL_n(k)$ such that $\tr(\rho) = T$, unique up to conjugation.
  \item If $\Gamma$ and $k$ are taken to be topological, then the above statements hold in topological/continuous form.
\end{enumerate}
\end{mythm}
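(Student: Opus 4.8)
The plan is to handle the four parts in turn; the substance is part~(3).

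\emph{Part~(1).} For $T = \tr(\rho)$ the conditions $T(1) = n$ and $T(\gamma_1\gamma_2) = T(\gamma_2\gamma_1)$ are immediate, so the content is \eqref{gl_n_relation}. I would note that this is a universal polynomial identity with integer coefficients in the entries of $n+1$ generic $n \times n$ matrices, so by base change it suffices to check it over $k = \C$. There, put $V = \C^n$ and let $a = \sum_{\sigma \in S_{n+1}} \sgn(\sigma)\,\sigma$ act on $V^{\otimes(n+1)}$ by permuting tensor factors; this operator is $(n+1)!$ times the projector onto $\Lambda^{n+1}V = 0$, hence $a = 0$. Thus $\tr_{V^{\otimes(n+1)}}((A_1 \otimes \cdots \otimes A_{n+1})\circ a) = 0$ for all $A_i \in \End(V)$, and expanding, $\tr_{V^{\otimes(n+1)}}((A_1 \otimes \cdots \otimes A_{n+1})\circ\sigma)$ is the product over the cycles of $\sigma$ of the traces of the corresponding ordered products of the $A_i$; summing over $\sigma$ with $A_i = \rho(\gamma_i)$ recovers the left side of \eqref{gl_n_relation}.

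\emph{Part~(2).} Since trace is additive on short exact sequences, $\tr(\rho) = \tr(\rho^{ss})$, so one may assume $\rho$ semisimple. If $\rho(\eta) = 1$ then plainly $\eta \in \ker(\tr\rho)$. Conversely let $A \subseteq M_n(k)$ be the $k$-subalgebra generated by $\rho(\Gamma)$; since $k^n$ is a faithful semisimple $A$-module its Jacobson radical vanishes, so $A$ is semisimple, and as $\charac k = 0$ the form $(a,b) \mapsto \tr(ab)$ is non-degenerate on $A$. If $\eta \in \ker(\tr\rho)$ then $\tr(a(\rho(\eta)-1)) = 0$ for all $a \in A$, and since $\rho(\eta) - 1 \in A$, non-degeneracy forces $\rho(\eta) = 1$, i.e.\ $\eta \in \ker(\rho) = \ker(\rho^{ss})$.

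\emph{Part~(3), the main case.} Extend $T$ $k$-linearly to $T\colon k[\Gamma] \to k$; it still satisfies $T(1) = n$, $T(xy) = T(yx)$, and the analogue of \eqref{gl_n_relation} with arguments in $k[\Gamma]$. Set $\mathfrak{a} = \{x \in k[\Gamma] : T(xy) = 0 \ \forall y\}$; cyclicity shows $\mathfrak{a}$ is a two-sided ideal, $T$ descends to $\overline T$ on $B := k[\Gamma]/\mathfrak{a}$, and the induced trace form on $B$ is non-degenerate. \textbf{The principal obstacle is to show that $B$ is a finite-dimensional semisimple $k$-algebra.} The idea is to rewrite \eqref{gl_n_relation} in Cayley--Hamilton form: to $r \in k[\Gamma]$ attach $\chi_r(X) = X^n - T(r)X^{n-1} + \cdots$ whose remaining coefficients are the universal polynomials in $T(r),\dots,T(r^n)$ given by Newton's identities (using $\charac k = 0$); specializing \eqref{gl_n_relation} at $\gamma_1 = \cdots = \gamma_n = r$, $\gamma_{n+1} = s$ gives $c\,T(\chi_r(r)s) = 0$ for a nonzero integer $c$ and all $s$, so $\chi_r(\bar r) = 0$ in $B$. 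Hence every element of $B$ is integral over $k$ of degree $\le n$; combined with non-degeneracy of the trace form and $k = \overline k$ of characteristic $0$, the structure theory of algebras carrying a non-degenerate Cayley--Hamilton trace (Procesi, Rouquier) yields $B \cong \prod_i M_{n_i}(k)$, and I expect proving this cleanly to be the technical heart. Granting it, every trace functional on $B$ is a $k$-combination of the factor traces, so $\overline T = \sum_i \lambda_i \tr_{M_{n_i}(k)}$; evaluating \eqref{gl_n_relation} at a rank-one idempotent of the $i$-th factor and using $\sum_{\sigma \in S_{n+1}} \sgn(\sigma)\,t^{\kappa(\sigma)} = \prod_{l=0}^{n}(t-l)$ (with $\kappa(\sigma)$ the number of cycles) forces $\lambda_i \in \{0,1,\dots,n\}$, while $\overline T(1) = n$ gives $\sum_i \lambda_i n_i = n$. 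Writing $m_i := \lambda_i$ and letting $\rho_i\colon \Gamma \to B \to M_{n_i}(k)$ be the $i$-th projection (a representation, as $\gamma$ is a unit in $k[\Gamma]$, and irreducible by Burnside since the image of $\Gamma$ spans $M_{n_i}(k)$), the semisimple representation $\rho = \bigoplus_i \rho_i^{\oplus m_i}\colon \Gamma \to GL_n(k)$ has $\tr(\rho)(\gamma) = \sum_i m_i \tr(\rho_i(\gamma)) = \overline T(\bar\gamma) = T(\gamma)$.

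\emph{Uniqueness and part~(4).} If $\rho, \rho'$ are semisimple with equal trace $T$, then by linear independence of irreducible characters (Brauer--Nesbitt; using $k = \overline k$, $\charac k = 0$) they have the same constituents with the same multiplicities, hence are conjugate in $GL_n(k)$. For the topological refinement, continuity of $\tr(\rho)$ given continuous $\rho$ is clear; conversely, fixing $\gamma_1,\dots,\gamma_N \in \Gamma$ whose images form a $k$-basis of $B$, non-degeneracy of the trace form writes the matrix entries of $\rho(\gamma)$ as fixed $k$-linear combinations of the $T(\gamma\gamma_j)$, each continuous in $\gamma$ when $T$ and $\Gamma$ are topological, so $\rho$ is continuous.
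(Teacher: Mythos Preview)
The paper does not prove this theorem at all: it is stated in the introduction with the citation \cite[Theorem 1]{taylor} and used only as motivation, with no proof given in the body. So there is nothing in the paper to compare your proposal against.

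That said, your sketch is essentially Taylor's own strategy (extend $T$ linearly to $k[\Gamma]$, pass to the quotient $B$ by the trace-kernel, show $B$ is finite-dimensional semisimple via a Cayley--Hamilton consequence of \eqref{gl_n_relation}, then decompose $\overline T$ over the matrix factors). Your parts (1), (2), and the uniqueness/continuity arguments are correct and standard. The one genuine gap you yourself flag is the step ``non-degenerate trace form $+$ every element satisfies its degree-$n$ characteristic polynomial $\Rightarrow$ $B \cong \prod_i M_{n_i}(k)$'': this is not automatic from integrality alone, and in Taylor's paper it is where the real work happens (one shows $B$ is finite-dimensional by bounding $\dim_k B \le n^2$ via the non-degenerate form and the multilinear identity, then deduces semisimplicity from non-degeneracy of the trace form in characteristic $0$). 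Your citation of Procesi/Rouquier is appropriate but does not substitute for the argument. One small point: your claim that \eqref{gl_n_relation} extends to arguments in $k[\Gamma]$ is correct because each $T_\sigma$ is multilinear once $T$ is extended linearly, but it is worth saying this explicitly since it underlies both the Cayley--Hamilton step and the rank-one-idempotent evaluation.
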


Taylor used $GL_n$-pseudocharacters to construct Galois representations having certain properties \cite[\S 2]{taylor}.

Recently, V.\ Lafforgue formulated an analog of $GL_n$-pseudocharacters which works with $GL_n$ replaced by any connected reductive group $H$.  However, instead of consisting of one function $T: \Gamma \ra k$ satisfying a finite number of relations, these ``pseudocharacters'' consist of an infinite sequence of algebra morphisms satisfying certain properties.  These sequences of morphisms are essentially equivalent to specifying an infinite number of functions $\Gamma^I \ra k$, with $I$ ranging over all finite sets, satisfying an infinite number of relations.

Lafforgue also shows how to derive Taylor's result from the above theorem \cite[Remark 11.8]{lafforgue_french}, using results of Procesi \cite{procesi} which state that the trace function ``generates'' all of the algebras $k[GL_n^I]^{\Ad GL_n}$ (here $\Ad GL_n$ denotes the diagonal conjugation action) and which explicitly describe all of the relations between these trace functions.

In Section \ref{general_results} of this paper, we reformulate Lafforgue's result in terms of a new algebraic structure called an $\FFG$-algebra.  Collections of morphisms $\Xi_n$ as above are recast as morphisms between certain $\FFG$-algebras.  We then use the finiteness theorems of classical invariant theory to show that, for any $H$ as above, the $\FFG$-algebra derived from the invariants of $H$ is finitely presented.  Hence it is always possible to define $H$-pseudocharacters consisting of finitely many functions $\Gamma^I \ra k$ satisfying finitely many relations.

In Section \ref{examples}, we use invariant theoretic-results of Procesi and others \cite{procesi, aslaksen_tan_zhu, rogora} to give explicit finite presentations for the $\FFG$-algebras corresponding to the general and ordinary orthogonal groups $GO_n$ and $O_n$, the general and ordinary symplectic groups $GSp_n$ and $Sp_n$, and the special orthonal group $SO_n$.  By extension, we define explicit pseudocharacters for these groups.

Finally, in Section \ref{conjugacy}, we use our pseudocharacters to investigate the problem of conjugacy vs.\ element-conjugacy for representions $\Gamma \ra H$, when $H$ is a linear algebraic group for which one can define pseudocharacters.  We formulate a general condition in terms of $\FFG$-algebras under which element-conjugacy implies conjugacy.  We then use our explicit pseudocharacters for $GO_n(\C)$, $O_n(\C)$, $GSp_{2n}(\C)$, $Sp_{2n}(\C)$ to prove that for any group $\Gamma$, element-conjugate semisimple representations from $\Gamma$ to one of those groups are automatically conjugate.  Previous results of this form were only known for $O_n(\C)$ and $Sp_{2n}(\C)$, and only for compact $\Gamma$.  We also give a counterexample to the corresponding claim for $SO_{2n}(\C)$ ($n\ge 3$) which is simpler than that used in \cite[Proposition 3.8]{larsen_1}, and which extends that result to $SO_6(\C)$.

\section{General Results on Pseudocharacters}\label{general_results}
In this section, we define $\FFG$-algebras and use them to reformulate V.\ Lafforgue's result.  Section \ref{algebras} introduces $\FFG$-algebras and the closely related $\FI$-algebras and $\FFS$-algebras, modeled after the $\FI$-modules defined in \cite{fi_modules}.  Section \ref{lafforgue} restates V.\ Lafforgue's result in terms of morphisms between particular $\FFG$-algebras.  Finally, Section \ref{finiteness} shows that the $\FFG$-algebra $k[H^\bullet]^{\Ad H}$ (see Example \ref{invariant_example}) appearing in Theorem \ref{lafforgue_thm} is ``finitely presented'' as an $\FFG$-algebra, in an appropriate sense (in fact, it is finitely presented even as an $\FI$-algebra), and we explain how this finite presentation implies the general existence of ``finite'' pseudocharacters.

\subsection{FI-, FFS-, and FFG-algebras.}\label{algebras}
We denote by FI the category of finite sets, FFS the category of free finitely generated semigroups, and FFG the category of free finitely generated groups.  For every finite (nonempty) set $I$, let $\FS(I)$ (resp.\ $\FG(I)$) denote the free semigroup (resp.\ group) generated by $I$.

\begin{mylemma}
The category FFS is generated by the following two types of morphisms:
\begin{itemize}
  \item morphisms $\FS(I) \ra \FS(J)$ that sends generators to generators, i.e., those induced by maps between finite sets $I \ra J$
  \item morphisms
  \begin{align*}
  &\FS(\{x_1, \dots, x_n\}) \ra \FS(\{y_1, \dots, y_{n+1}\}), &x_i \mapsto y_i (i < n), x_n \mapsto y_ny_{n+1}.
  \end{align*}
\end{itemize}
The category FFG is generated by the above two types of morphisms (with $\FS$ replaced by $\FG$) together with:
\begin{itemize}
  \item morphisms
  \begin{align*}
  &\FG(\{x_1, \dots, x_n\}) \ra \FG(\{y_1, \dots, y_n\}), &x_i \mapsto y_i (i < n), x_n \mapsto y_n^{-1}.
  \end{align*}
\end{itemize}
\end{mylemma}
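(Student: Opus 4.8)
The plan is to show that an arbitrary morphism in FFS (resp.\ FFG) can be factored as a composite of the listed generating morphisms, by induction on the total ``size'' of the target data. A morphism $\phi \colon \FS(I) \to \FS(J)$ is determined by the tuple of words $(\phi(x))_{x \in I}$ where $I = \{x_1, \dots, x_n\}$; set $\ell(\phi) = \sum_{x \in I} |\phi(x)|$, the total length of these words. If $\ell(\phi) = n$, then every $\phi(x_i)$ has length one, i.e.\ $\phi$ sends generators to generators, so $\phi$ is of the first type. For the inductive step, suppose some generator $x_i$ has $|\phi(x_i)| \ge 2$; write $\phi(x_i) = w_i w_i'$ as a nontrivial concatenation. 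First I would factor $\phi$ through the ``splitting'' morphism $s \colon \FS(\{x_1, \dots, x_n\}) \to \FS(\{y_1, \dots, y_{n+1}\})$ of the second type (applied after first permuting the generators so that the one being split is in the last slot, using a morphism of the first type) composed with a morphism $\psi \colon \FS(\{y_1, \dots, y_{n+1}\}) \to \FS(J)$ defined by $\psi(y_j) = \phi(x_j)$ for $j < n$, $\psi(y_n) = w_i$, $\psi(y_{n+1}) = w_i'$. Then $\psi \circ s = \phi$ (after undoing the permutation), and $\ell(\psi) = \ell(\phi)$ but with one fewer generator whose image has length $\ge 2$; more precisely the relevant induction is on the number of generators $x$ with $|\phi(x)| \ge 2$, or on $\sum_x \max(|\phi(x)| - 1, 0)$, which strictly decreases. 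Iterating, $\phi$ becomes a composite of second-type morphisms and permutation/renaming morphisms followed by a generator-to-generator morphism, all of which are of the two allowed types.

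For FFG the same strategy applies, but now the words $\phi(x_i)$ involve both generators and their inverses. The extra third-type morphism $\FG(\{x_1,\dots,x_n\}) \to \FG(\{y_1,\dots,y_n\})$ sending $x_n \mapsto y_n^{-1}$ lets us handle inverse letters: given a reduced word $\phi(x_i)$ in the $y_j^{\pm 1}$, I would peel off letters one at a time from the right, using a second-type morphism to split off a new last generator and, when that peeled letter is an inverse $y_j^{-1}$, post-composing with a third-type morphism (sandwiched between renaming morphisms of the first type to move $j$ into the last slot) to flip its sign. One subtlety is that in a group the concatenation used to ``split'' need not be reduced — but this causes no problem, since the generating morphisms are not required to have reduced image, and any word, reduced or not, represents a well-defined group element. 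So by the same induction on $\sum_x \max(|\phi(x)|-1,0)$ (using reduced word length), every morphism in FFG is a composite of the three listed types.

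The main obstacle — really the only point requiring care — is bookkeeping: making the induction parameter genuinely decrease while keeping track of how the auxiliary renaming maps (first type) permute the generators so that the generator one wishes to split or invert always sits in the required last position, and checking that composing with a first-type morphism to ``move it back'' is legitimate. One should also verify the trivial base cases (the empty splitting, i.e.\ $n$ already in final form) and note that first-type morphisms already include all injections, surjections, and bijections between finite sets, so arbitrary relabelings are available for free. Everything else is a direct unwinding of the definition of a free semigroup/group morphism as a choice of word for each generator.
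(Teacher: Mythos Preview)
The paper states this lemma without proof, evidently treating it as an elementary observation, so there is no argument to compare against; your inductive approach (peeling off one letter at a time via the ``splitting'' morphisms, with permutations of the first type to move the relevant generator into the last slot) is the natural way to fill the gap and is essentially correct.

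One small omission in the $\FFG$ case: your induction parameter $\sum_x \max(|\phi(x)|-1,0)$ vanishes once every $\phi(x_i)$ has reduced length $\le 1$, but this base case includes the possibility $\phi(x_i)=1$, which is neither a generator nor the inverse of one, and hence is not immediately covered by a first- or third-type morphism. You should note explicitly that the map sending a generator to the identity is obtainable from the three types: for instance, compose a second-type morphism $x_n \mapsto y_n y_{n+1}$ with a third-type morphism inverting $y_{n+1}$, then a first-type morphism identifying the last two generators, yielding $x_n \mapsto y_n y_n^{-1} = 1$. With that patch the base case is complete and the argument goes through.
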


\begin{mydef}
Fix a commutative ring $k$.  An \emph{FI-algebra} (resp.\ \emph{FFS-algebra, FFG-algebra}) is a functor from FI (resp.\ FFS, FFG) to the category of $k$-algebras.  Morphisms between FI-algebras (resp.\ FFS-algebras, FFG-algebras) are natural transformations of functors.
\end{mydef}

If $A^\bullet$ is an FI-algebra (resp.\ FFS-algebra, FFG-algebra) and $I$ is a finite set, we will use $A^I$ to denote the $k$-algebra corresponding to $I$ under $A^\bullet$, and similary for morphisms $\Theta^\bullet: A^\bullet \ra B^\bullet$.  If $\phi: I \ra J$ (resp.\ $\FS(I) \ra \FS(J)$, $\FG(I) \ra \FG(J)$) is a morphism, then we will use $A^\phi$ to denote the corresponding $k$-algebra morphism $A^I \ra A^J$.

We can define kernels, cokernels, subobjects, quotients, and tensor products over $k$ in the category of FI-algebras (resp.\ FFS-algebras, FFG-algebras) by using the analogous constructions in the category of $k$-algebras, applying those constructions to each $k$-algebra in the image of an FI-algebra.  We say that a morphism $\Theta^\bullet$ is injective (resp.\ surjective, bijective) if each $\Theta^I$ has that property.

\begin{myrmk}
Any FFG-algebra is naturally an FFS-algebra, and any FFS-algebra is naturally an FI-algebra.  A morphism of FFG-algebras is also a morphism of FFS-algebras, and a morphism of FFS-algebras is also a morphism of FI-algebras.
\end{myrmk}

\begin{myex}
Let $\Gamma$ be a group and $R$ be a $k$-algebra.  We define an $\FFG$-algebra $\Map(\Gamma^\bullet, R)$ as follows.  To the finite set $I$, we associate $\Map(\Gamma^I, R)$, the $k$-algebra of all set maps $\Gamma^I \ra R$.  Next, recall that for any finite set $I$, $\Gamma^I = \Hom(\FG(I), \Gamma)$.  Thus for any group homomorphism $\phi: \FG(I) \ra \FG(J)$, we have a natural set map $\Gamma^J \ra \Gamma^I$, which induces a $k$-algebra morphism $\Map(\Gamma^I, R) \ra \Map(\Gamma^J, R)$; we associate this morphism to $\phi$.
\end{myex}

\begin{myex}\label{invariant_example}
Let $V$ be an affine variety over $k$, and let $H$ be a group which acts on $V$.  We define the $\FI$-algebra $k[V^\bullet]^H$ by the association $I \mapsto k[V^I]^H$, where $H$ acts diagonally on $V^I$.  For any set map $\phi: I \ra J$, we get a variety map $V^J \ra V^I$ defined over $k$, and this induces a $k$-algebra morphism $k[V^I]^H \ra k[V^J]^H$, which we associate to $\phi$.  If $V$ is also an algebraic semigroup (resp.\ group) whose multiplication is compatible with the action of $H$, then we can similarly give $k[V^\bullet]^H$ a structure of $\FFS$-algebra (resp.\ $\FFG$-algebra).
\end{myex}

For the remainder of this section, we state definitions and claims for FI-algebras, but they easily generalize to FFS-algebras and FFG-algebras.

\begin{mydef}
Let $A^\bullet$ be an FI-algebra.  Given a subset $\Sigma \subset \sqcup_I A^I$, the \emph{FI-algebra span ${\rm span}_\FI(A^\bullet, \Sigma)$ of $\Sigma$ in $A^\bullet$} is defined to be the minimum sub-FI-algebra of $A^\bullet$ containing each element of $\Sigma$.  An FI-algebra is \textit{finitely generated} if it equals the span of some finite set.
\end{mydef}

There is another way to characterize finite generation, in terms of free FI-algebras.  Let $m$ be a nonnegative integer, and let $\mbf{m} := \{1, 2, \dots, m\}$ denote the typical set of $m$ elements.

\begin{mydef}
The \textit{free FI-algebra of degree $m$}, denoted $F_\FI(m)^\bullet$, is defined by
\begin{align*}
F_\FI(m)^I &:= k[\{x_\psi | \psi \in \Hom_\FI(\mbf{m}, I)\}] \\
F_\FI(m)^\phi &:= (x_\psi \mapsto x_{\phi \circ \psi}).
\end{align*}
In the case of $\FFS$-algebras (resp.\ $\FFG$-algebras), we replace $\Hom_{\FI}(\mbf{m}, I)$ with $\Hom_{\FFS}(\FS(\mbf{m}), \FS(I))$ (resp.\ $\Hom_{\FFG}(\FG(\mbf{m}), \FG(I))$).
\end{mydef}

If $A^\bullet$ is an FI-algebra and $a \in A^{\mbf{m}}$, then it is easy to see that $x_{id_{\mbf{m}}} \mapsto a$ extends to a unique map of FI-algebras $F_\FI(m)^\bullet \ra A^\bullet$, and its image is precisely $\mbox{span}_\FI(A^\bullet, a)$.  Thus:

\begin{myprop}
An FI-algebra $A^\bullet$ is finitely generated iff it admits a surjective morphism $\bigotimes_i F_\FI(m_i) \ra A^\bullet$ for some finite sequence of integers $(m_i)$.
\end{myprop}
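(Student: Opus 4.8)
The plan is to prove the two implications separately, reducing each to the single-generator fact recorded just before the Proposition — that for $a \in A^{\mbf{m}}$ the assignment $x_{id_{\mbf{m}}} \mapsto a$ extends uniquely to a morphism $F_\FI(m)^\bullet \to A^\bullet$ whose image is $\mathrm{span}_\FI(A^\bullet, a)$ — together with three elementary functoriality facts about the $\FI$-algebra span. These are: (i) images and preimages of sub-$\FI$-algebras under a morphism $\Theta^\bullet \colon C^\bullet \to A^\bullet$ are again sub-$\FI$-algebras, and consequently $\Theta(\mathrm{span}_\FI(C^\bullet, \Sigma)) = \mathrm{span}_\FI(A^\bullet, \Theta(\Sigma))$ for any $\Sigma \subset \sqcup_I C^I$; (ii) applying an invertible $\FI$-morphism to an element does not change the $\FI$-algebra span it generates; and (iii) if $B^\bullet = \mathrm{span}_\FI(B^\bullet, \Sigma_B)$ and $C^\bullet = \mathrm{span}_\FI(C^\bullet, \Sigma_C)$, then $B^\bullet \otimes_k C^\bullet = \mathrm{span}_\FI(B^\bullet \otimes_k C^\bullet, \iota_B(\Sigma_B) \cup \iota_C(\Sigma_C))$, where $\iota_B, \iota_C$ are the canonical inclusions. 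Each of (i)--(iii) is proved by the standard ``$\supseteq$ is immediate, and $\subseteq$ because the relevant closure or preimage is a sub-$\FI$-algebra containing the generators'' argument; for (iii) one additionally uses that $B^I \otimes_k C^I$ is generated as a $k$-algebra by the images of $B^I$ and $C^I$, together with naturality of $\iota_B$ and $\iota_C$, and the fact that the tensor product of $\FI$-algebras is formed degreewise, so that $\bigotimes_i F_\FI(m_i)^\bullet$ is the coproduct of the $F_\FI(m_i)^\bullet$ in the category of $\FI$-algebras.

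For the forward direction, suppose $A^\bullet = \mathrm{span}_\FI(A^\bullet, \{a_1, \dots, a_r\})$ with $a_i \in A^{I_i}$. Fixing bijections $I_i \xra{\sim} \mbf{m}_i$ with $m_i := |I_i|$ and invoking (ii), we may assume $a_i \in A^{\mbf{m}_i}$. The single-generator fact produces morphisms $\Theta_i^\bullet \colon F_\FI(m_i)^\bullet \to A^\bullet$ with $\Theta_i(x_{id_{\mbf{m}_i}}) = a_i$; assembling these via the universal property of the tensor product of commutative $k$-algebras in each degree (with naturality inherited from the $\Theta_i^\bullet$) gives a morphism $\Theta^\bullet \colon \bigotimes_i F_\FI(m_i)^\bullet \to A^\bullet$. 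By (i) its image is a sub-$\FI$-algebra of $A^\bullet$, and it contains each $a_i$, hence contains $\mathrm{span}_\FI(A^\bullet, \{a_i\}) = A^\bullet$; thus $\Theta^\bullet$ is surjective.

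For the reverse direction, observe first that $F_\FI(m)^\bullet$ is finitely generated: since $F_\FI(m)^\psi(x_{id_{\mbf{m}}}) = x_\psi$ for every $\psi \in \Hom_{\FI}(\mbf{m}, I)$, every $k$-algebra generator of every $F_\FI(m)^I$ is a transition-image of $x_{id_{\mbf{m}}}$, so $F_\FI(m)^\bullet = \mathrm{span}_\FI(F_\FI(m)^\bullet, x_{id_{\mbf{m}}})$. Iterating (iii) shows $\bigotimes_i F_\FI(m_i)^\bullet$ is finitely generated, by some finite set $\Sigma$. Given a surjective $\Theta^\bullet \colon \bigotimes_i F_\FI(m_i)^\bullet \to A^\bullet$, fact (i) gives $A^\bullet = \Theta\big(\bigotimes_i F_\FI(m_i)^\bullet\big) = \Theta\big(\mathrm{span}_\FI(\bigotimes_i F_\FI(m_i)^\bullet, \Sigma)\big) = \mathrm{span}_\FI(A^\bullet, \Theta(\Sigma))$, and $\Theta(\Sigma)$ is finite, so $A^\bullet$ is finitely generated.

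The argument is entirely formal, and the only step requiring real care is (iii) --- together with the underlying bookkeeping that images and preimages of sub-$\FI$-algebras under morphisms are sub-$\FI$-algebras --- because this is where one must reconcile the ``generated in each degree as a $k$-algebra'' description of the tensor product with the extra closure condition (under all transition maps $A^\phi$) built into the notion of sub-$\FI$-algebra. Everything else is routine. As noted in the surrounding text, the same proof works verbatim with $\FI$ replaced by $\FFS$ or $\FFG$.
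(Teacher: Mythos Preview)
Your proof is correct and follows exactly the route the paper intends: the paper states the Proposition immediately after the single-generator fact and the word ``Thus:'', leaving the details implicit, and your argument is precisely the natural expansion of that hint into a full proof. The only difference is level of detail --- you make explicit the functoriality facts (i)--(iii) and the tensor-product bookkeeping that the paper silently assumes.
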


\begin{mydef}
Let $A^\bullet$ be an FI-algebra. An \textit{FI-ideal of $A^\bullet$} is an association $\mf{a}^\bullet$ taking each finite set $I$ to an ideal $\mf{a}^I$ of $A^I$, such that for all morphisms $\phi \in \Hom_\FI(I, J)$, we have $A^\phi(\mf{a}^I) \subset \mf{a}^J$.  Given a morphism of FI-algebras $\Theta^\bullet: A^\bullet \ra B^\bullet$, we define the \textit{kernel} of $\Theta^\bullet$ to be the association $\ker(\Theta^\bullet)$ taking each finite set $I$ to the ideal $\ker(\Theta^I : A^I \ra B^I)$ of $A^I$.  We define the \textit{radical} of an $\FI$-ideal $\mf{a}^\bullet$ to be the association $I \mapsto \sqrt{\mf{a}^I}$, where the radical is taken in $A^I$.
\end{mydef}

The following lemma is easy.

\begin{mylemma}
\begin{enumerate}[(i)]
  \item Let $\Theta^\bullet: A^\bullet \ra B^\bullet$ be a morphism of FI-algebras.  Then $\ker(\Theta^\bullet)$ is an FI-ideal of $A^\bullet$.
  \item Let $\mf{a}^\bullet$ be an FI-ideal of $A^\bullet$.  Then there exists an FI-algebra $B^\bullet$ and a surjective morphism $\Theta^\bullet: A^\bullet \ra B^\bullet$ such that $\ker(\Theta^\bullet) = \mf{a}^\bullet$.  Furthermore, the pair $(B^\bullet, \Theta^\bullet)$ is unique up to unique isomorphism.
\end{enumerate}
\end{mylemma}

We denote the FI-algebra in part (ii) by $A^\bullet / \mf{a}^\bullet$.

\begin{mydef}
Let $A^\bullet$ be an FI-algebra.  Given a subset $\Sigma \subset \sqcup_I A^I$, we define the \textit{FI-ideal generated by $\Sigma$} to be the minimum FI-ideal of $A^\bullet$ containing each element of $\Sigma$.  We define an FI-ideal to be \textit{finitely generated} if it is generated by some finite set.
\end{mydef}

\subsection{Lafforgue's theorem.}\label{lafforgue}
Let $H$ be a connected reductive group defined over $k$, and let $\Gamma$ be an abstract group.  For any finite set $I$, we let $\Ad H$ denote the diagonal conjugation action of $H$ on $H^I$, and we let $k[H^\bullet]^{\Ad H}$ denote the $\FFG$-algebra in Example \ref{invariant_example} corresponding to this action.  Also let $\Map(\Gamma^\bullet/\Ad \Gamma, k)$ be the sub-$\FFG$-algebra of $\Map(\Gamma^\bullet, k)$ consisting of functions with are invariant under diagonal conjugation by $\Gamma$. Then we can rephrase V.\ Lafforgue's result as follows.

\begin{mythm}[{\cite[Proposition 11.7]{lafforgue_french}}]\label{lafforgue_thm}
Let $H$ be a connected reductive group defined over $k$.  Assume $\charac k = 0$.  Then:
\begin{enumerate}[(i)]
  \item There is a natural bijection between
  \[
  \{\mbox{$H(\overline{k})$-conjugacy classes of semisimple representations $\rho: \Gamma \ra H(\overline{k})$}\}
  \]
  and $\FFS$-algebra morphisms
  \[
  \Theta^\bullet: \overline{k}[H^\bullet]^{\Ad H} \ra \Map(\Gamma^\bullet/\Ad \Gamma, \overline{k}).
  \]
  The bijection is given by sending $\rho: \Gamma \ra H(\overline{k})$ to the $\FFS$-algebra morphism $\Theta^\bullet$ given by
  \[
  \Theta^{\mbf{n}}(f)(\gamma_1, \dots, \gamma_n) = f(\rho(\gamma_1), \dots, \rho(\gamma_n)).
  \]
  
  \item In (i), if $\Theta^\bullet$ restricts to give an $\FFS$-algebra morphism $k[H^\bullet]^{\Ad H} \ra \Map(\Gamma^\bullet/\Ad \Gamma, k)$, then the corresponding conjugacy class contains a representation $\rho: \Gamma \ra H(k')$ for some finite extension $k'/k$.
  
  \item If $\Gamma$ is profinite, $H$ is split over $k$, and $k$ is a finite extension of $\Q_l$ for some $l$, then (i) and (ii) hold with ``representation'' replaced by ``continuous representation'' and with $\Map(\Gamma^\bullet/\Ad \Gamma, \overline{k})$ replaced by the $\FFG$-algebra $C(\Gamma^\bullet/\Ad \Gamma, \overline{k})$ of continuous $\Ad \Gamma$-invariant maps $\Gamma^I \ra \overline{k}$ (and similarly for $\Map(\Gamma^\bullet/\Ad \Gamma, k)$).
\end{enumerate}
\end{mythm}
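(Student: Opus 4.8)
The plan is to prove Theorem~\ref{lafforgue_thm} by exploiting the geometric invariant theory of the $\Ad H$-action on the varieties $H^n$. Since $H$ is reductive and $\charac\overline{k}=0$, for each $n$ the affine GIT quotient $H^n/\!\!/\Ad H=\Spec\big(\overline{k}[H^n]^{\Ad H}\big)$ has $\overline{k}$-points in bijection with the Zariski-closed $\Ad H$-orbits in $H^n(\overline{k})$, and the quotient morphism separates closed orbits. By Richardson's theorem (and its refinements due to Bate--Martin--R\"ohrle), the $\Ad H$-orbit of $(h_1,\dots,h_n)$ is closed exactly when the subgroup it topologically generates is $H$-completely reducible, and a representation $\rho\colon\Gamma\ra H(\overline{k})$ is semisimple exactly when $\overline{\rho(\Gamma)}$ is $H$-completely reducible. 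Two elementary observations will be used throughout: (a) an $\Ad H$-equivariant morphism $f\colon H^m\ra H^n$ (evaluation of a word, coordinate duplication, adjoining a word as a new coordinate, etc.) has a graph morphism $H^m\ra H^m\times H^n$ which is a closed immersion, so $f$ sends closed orbits to closed orbits and induces a morphism of GIT quotients compatible with the FFS/FFG-structure; (b) a homomorphism of semigroups between two groups automatically preserves identities and inverses, hence is a homomorphism of groups.

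The forward direction is routine: for semisimple $\rho$, the functions $\Theta^{\mbf n}(f)\colon(\gamma_1,\dots,\gamma_n)\mapsto f(\rho(\gamma_1),\dots,\rho(\gamma_n))$ are $\Ad\Gamma$-invariant because each $f$ is $\Ad H$-invariant; naturality with respect to coordinate morphisms is immediate, and naturality with respect to the multiplication morphisms $x_n\mapsto y_ny_{n+1}$ is precisely the statement that $\rho$ is multiplicative. Conjugating $\rho$ does not change $\Theta^\bullet$, again by $\Ad H$-invariance, so the construction descends to conjugacy classes. For injectivity, suppose semisimple $\rho_1,\rho_2$ have $\Theta^\bullet_{\rho_1}=\Theta^\bullet_{\rho_2}$. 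Pick a finite $S_0\subset\Gamma$ with $\{\rho_i(\gamma):\gamma\in S_0\}$ Zariski dense in $\overline{\rho_i(\Gamma)}$ for $i=1,2$ (possible since each $\overline{\rho_i(\Gamma)}$ is topologically finitely generated). The tuples $(\rho_i(\gamma))_{\gamma\in S_0}$ then have closed $\Ad H$-orbits and map to the same point of $H^{S_0}/\!\!/\Ad H$, so these orbits coincide; after conjugating $\rho_1$ we may assume $\rho_1|_{S_0}=\rho_2|_{S_0}$. For any $\gamma\in\Gamma$, the same argument applied to $S_0\cup\{\gamma\}$ gives $g\in H(\overline{k})$ with $g\rho_1(\delta)g^{-1}=\rho_2(\delta)$ on $S_0\cup\{\gamma\}$; then $g$ centralizes the dense set $\rho_1(S_0)$, hence all of $\overline{\rho_1(\Gamma)}$, so $\rho_1(\gamma)=\rho_2(\gamma)$.

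For surjectivity, first let $\Gamma$ be finitely generated and fix a finite $\{\gamma_1,\dots,\gamma_n\}\subset\Gamma$ generating $\Gamma$ as a semigroup (for instance a generating set together with its inverses). Evaluating $\Theta^{\mbf n}$ at $(\gamma_1,\dots,\gamma_n)$ gives a $\overline{k}$-algebra map $\overline{k}[H^n]^{\Ad H}\ra\overline{k}$, i.e.\ a point of $H^n/\!\!/\Ad H$, hence a closed orbit; fix a representative $(h_1,\dots,h_n)$. If $w,w'$ are positive words with $w(\gamma_\bullet)=w'(\gamma_\bullet)$ in $\Gamma$, then naturality of $\Theta^\bullet$ along the FFS-morphisms adjoining $w$ and $w'$ as extra generators, together with (a), shows that $(h_\bullet,w(h_\bullet))$ and $(h_\bullet,w'(h_\bullet))$ lie in the same closed orbit of $H^{n+1}$; a conjugating element centralizes $h_1,\dots,h_n$, hence centralizes $w(h_\bullet)$, forcing $w(h_\bullet)=w'(h_\bullet)$. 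Therefore $x_i\mapsto h_i$ descends to a semigroup homomorphism $\rho\colon\Gamma\ra H(\overline{k})$, which by (b) is a group homomorphism; it is semisimple since $\rho(\Gamma)=\langle h_1,\dots,h_n\rangle$ has closed orbit, and applying (a) to the equivariant morphism given by the words expressing an arbitrary tuple $(\delta_1,\dots,\delta_m)$ in the $\gamma_i$ shows $\Theta^\bullet_\rho=\Theta^\bullet$. For general $\Gamma$, write it as a directed union of finitely generated subgroups $\Gamma_\lambda$ and glue the resulting semisimple $\rho_\lambda$ using the uniqueness from the injectivity step; some care is needed because $\rho_\mu|_{\Gamma_\lambda}$ need not be semisimple, but its semisimplification has the same pseudocharacter as $\rho_\lambda$, which is enough to make compatible choices. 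For part (ii), if $\Theta^\bullet$ is defined over $k$ the GIT points above are $k$-rational, so the closed orbits are defined over $k$ and acquire points over a finite extension $k'$, producing a representation into $H(k')$; the obstruction to descending to $k$ itself is a Galois-cohomology class killed by a finite extension. Part (iii) is obtained by running the same construction over the finite quotients of the profinite group $\Gamma$ and deducing continuity of the reconstructed representation from continuity of the pseudocharacter, using compactness of $\Gamma$.

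The crux is the reconstruction in the surjective direction: turning the compatible family of GIT-quotient points packaged by $\Theta^\bullet$ into an honest homomorphism. Two points make this work cleanly. First, although $\Gamma^\bullet$ and $H^\bullet$ are naturally FFG-objects, it suffices that $\Theta^\bullet$ respect only the FFS-structure: one never needs to control $\rho(\gamma^{-1})$ directly, because a multiplicative map out of a group is automatically a homomorphism (this is the $H$-analogue of the fact that a $GL_n$-pseudocharacter determines $\tr\rho(\gamma^{-1})$ via Cayley--Hamilton). Second, evaluation of a word is an equivariant morphism, so it carries the fixed closed orbit to a closed orbit, which is exactly what allows relations in $\Gamma$ to be transported to $H(\overline{k})$. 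The residual friction — and the part I expect to require the most bookkeeping — is the passage from finitely generated to arbitrary $\Gamma$, where semisimplicity is not inherited by subgroups and one must pass to semisimplifications of restrictions before invoking the uniqueness established in the injectivity step.
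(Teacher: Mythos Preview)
The paper does not prove this theorem at all: it is stated as a reformulation of Lafforgue's result and carries the citation \cite[Proposition~11.7]{lafforgue_french}, with no proof environment following it. The only argument the paper supplies in this vicinity is the two-line proof of Corollary~\ref{lafforgue_cor}, which deduces the $\FFG$-version from the $\FFS$-version already granted by Lafforgue. So there is nothing in the paper to compare your argument against; you are supplying a proof where the paper is content to quote one.

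That said, your sketch is broadly along the lines of how such results are actually proved (closed orbits in $H^n/\!\!/\Ad H$ via Richardson, reconstruction of $\rho$ from a compatible system of GIT points), and the key mechanism---that adjoining a word as an extra coordinate is an equivariant closed immersion, so relations in $\Gamma$ transport to relations among the $h_i$---is correct. Two places deserve tightening. First, your general principle~(a) as stated is false: an arbitrary $\Ad H$-equivariant morphism $H^m\to H^n$ need not send closed orbits to closed orbits (think of a projection), and the graph being a closed immersion does not help. What you actually use, and what is true, is that the specific morphisms ``adjoin a word'' are themselves closed immersions, and for the verification $\Theta^\bullet_\rho=\Theta^\bullet$ you only need compatibility of the induced maps on GIT quotients, not preservation of closed orbits. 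Second, the passage from finitely generated to arbitrary $\Gamma$ is, as you note, the genuinely delicate step: restrictions of semisimple representations need not be semisimple, so the $\rho_\lambda$ you build on subgroups are only well-defined up to conjugacy and do not literally glue. One clean way around this is to fix once and for all a finitely generated $\Gamma_0\subset\Gamma$ large enough that $\overline{\rho_0(\Gamma_0)}$ already has the maximal possible dimension and component group among all $\overline{\rho_\lambda(\Gamma_\lambda)}$, pin down $\rho_0$ by a choice of representative, and then show that for every $\gamma$ the extension to $\Gamma_0\cup\{\gamma\}$ is forced (the conjugating element lies in the centralizer of $\overline{\rho_0(\Gamma_0)}$, which by maximality also centralizes the new element). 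Your outline gestures at this but does not carry it out.
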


\begin{mycor}\label{lafforgue_cor}
The above theorem holds with $\FFS$-algebra morphisms replaced by $\FFG$-morphisms.
\end{mycor}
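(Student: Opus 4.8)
The plan is to reduce the corollary to a single statement: that every $\FFS$-algebra morphism $\Theta^\bullet : \overline{k}[H^\bullet]^{\Ad H} \to \Map(\Gamma^\bullet/\Ad\Gamma,\overline{k})$ (and likewise in settings (ii) and (iii)) is automatically a morphism of $\FFG$-algebras. Since any morphism of $\FFG$-algebras is in particular a morphism of $\FFS$-algebras (an $\FFG$-algebra being an $\FFS$-algebra by restriction along $\FFS \to \FFG$), the $\FFG$-morphisms between these two objects form a subset of the $\FFS$-morphisms; so proving that this subset is everything upgrades the bijection of Theorem \ref{lafforgue_thm} to a bijection with $\FFG$-morphisms. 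By the Lemma of Section \ref{algebras} describing generators for the categories $\FFS$ and $\FFG$, the category $\FFG$ is generated by the generator-to-generator morphisms, the ``multiplication'' morphisms, and the ``inversion'' morphisms $\FG(\{x_1,\dots,x_n\}) \to \FG(\{y_1,\dots,y_n\})$, $x_i \mapsto y_i$ ($i<n$), $x_n \mapsto y_n^{-1}$. The first two families already lie in $\FFS$, so an $\FFS$-morphism commutes with the associated structure maps by hypothesis, and only compatibility with the inversion morphisms remains to be checked.

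To check that, I would use the explicit form of the bijection in Theorem \ref{lafforgue_thm}(i): every $\FFS$-morphism $\Theta^\bullet$ in the relevant hom-set is $\Theta^\bullet_\rho$ for some semisimple $\rho : \Gamma \to H(\overline{k})$, with $\Theta^{\mbf{n}}_\rho(f)(\gamma_1,\dots,\gamma_n) = f(\rho(\gamma_1),\dots,\rho(\gamma_n))$. The key point is that ``evaluation at the homomorphism $\rho$'' commutes with \emph{all} $\FFG$ structure maps, inversions included, because both sides are functorial in $\Hom(\FG(-),-)$. Identifying $H^I$ with $\Hom_{\mathrm{Grp}}(\FG(I),H)$ and $\Gamma^I$ with $\Hom_{\mathrm{Grp}}(\FG(I),\Gamma)$, the structure map attached to $\phi \in \Hom_{\FFG}(\FG(I),\FG(J))$ is, on either algebra, pullback along the precomposition map $\xi \mapsto \xi\circ\phi$; and the map $\rho_\ast : \Gamma^I \to H^I$, $\psi \mapsto \rho\circ\psi$, through which $\Theta^\bullet_\rho$ is defined, intertwines these precomposition maps simply because $(\rho\circ\psi)\circ\phi = \rho\circ(\psi\circ\phi)$. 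A one-line diagram chase with this associativity identity shows $\Theta^\bullet_\rho$ commutes with the inversion structure maps, hence is an $\FFG$-morphism; therefore so is every $\FFS$-morphism in the hom-set.

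Consequently the $\FFS$- and $\FFG$-hom-sets coincide, and the bijections of Theorem \ref{lafforgue_thm}(i), (ii), (iii) are already bijections with $\FFG$-algebra morphisms, since passing to a finite extension of scalars (in (ii)) or imposing continuity (in (iii)) does not affect the structure-map computation. I expect the only mildly delicate point to be bookkeeping: one must confirm that the $\FFG$-algebra structure on $k[H^\bullet]^{\Ad H}$ from Example \ref{invariant_example} indeed sends $\phi$ to the ``word-substitution'' variety map $H^J \to H^I$, so that under the identification $H^I = \Hom_{\mathrm{Grp}}(\FG(I),H)$ it becomes precomposition with $\phi$ — after which the verification is the routine naturality argument sketched above.
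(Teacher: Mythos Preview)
Your proposal is correct and follows essentially the same approach as the paper: both note that $\FFG$-morphisms are automatically $\FFS$-morphisms, then use the explicit formula $\Theta^{\mbf{n}}(f)(\gamma_1,\dots,\gamma_n) = f(\rho(\gamma_1),\dots,\rho(\gamma_n))$ supplied by Theorem~\ref{lafforgue_thm} to verify that every $\FFS$-morphism in the relevant hom-set is already an $\FFG$-morphism. Your write-up is more detailed in unpacking why this formula commutes with the inversion structure maps (via the associativity $(\rho\circ\psi)\circ\phi = \rho\circ(\psi\circ\phi)$), but the underlying argument is the same.
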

\begin{proof}
Any $\FFG$-algebra morphism $\overline{k}[H^\bullet]^{\Ad H} \ra \Map(\Gamma^\bullet/\Ad \Gamma, \overline{k})$ is also an $\FFS$-algebra morphism.  Conversely, given an $\FFS$-algebra morphism $\Theta^\bullet: \overline{k}[H^\bullet]^{\Ad H} \ra \Map(\Gamma^\bullet/\Ad \Gamma, \overline{k})$, we get a representation $\rho: \Gamma \ra H(\overline{k})$ by the theorem, and then the relation
\[
\Theta^{\mbf{n}}(f)(\gamma_1, \dots, \gamma_n) = f(\rho(\gamma_1), \dots, \rho(\gamma_n))
\]
shows that $\Theta^\bullet$ is in fact an $\FFG$-algebra morphism.
\end{proof}

\subsection{Explicit descriptions of pseudocharacters.}\label{finiteness}
In this section, we will show that whenever Lafforgue's theorem applies to $H$, the $\FFG$-algebra $k[H^\bullet]^{\Ad H}$ is ``finitely presented'' in an appropriate sense.  In fact, this is true even of $k[H^\bullet]^{\Ad H}$ as an $\FI$-algebra.  As a consequence, it is always possible to define pseudocharacters for $H$ very explicitly, in a sense which will be made clear in Section \ref{examples}.

\begin{mythm}
Assume $k$ is a field of characteristic 0.  Let $H$ be a reductive group over $k$ which acts linearly on a finite-dimensional $k$-vector space $V$.  Then the FI-algebra $k[V^\bullet]^H$ is finitely generated.
\end{mythm}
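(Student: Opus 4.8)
The plan is to reduce everything to the classical finiteness and polarization theorems of invariant theory. By the definition of finite generation, it suffices to produce a \emph{finite} subset $\Sigma \subset \sqcup_I k[V^I]^H$ with ${\rm span}_\FI(k[V^\bullet]^H, \Sigma) = k[V^\bullet]^H$. I will take $\Sigma$ to consist of the full polarizations of a finite set of algebra generators of $k[V^{\oplus p}]^H$, where $p := \dim V$.

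Since $\charac k = 0$, the reductive group $H$ is linearly reductive, so Hilbert's finiteness theorem gives finitely many multihomogeneous algebra generators $g_1, \dots, g_s$ of $k[V^{\oplus p}]^H = ({\rm Sym}((V^*)^{\oplus p}))^H$; let $e_j = \deg g_j$ and let $(d_1, \dots, d_p)$ be the multidegree of $g_j$ (so $\sum_i d_i = e_j$). Fix a finite set $E_j = E_j^{(1)} \sqcup \dots \sqcup E_j^{(p)}$ with $|E_j^{(i)}| = d_i$, and let $\tilde g_j \in k[V^{E_j}]$ be the full polarization of $g_j$, obtained by splitting the $i$-th variable of $g_j$ into the copies of $V$ indexed by $E_j^{(i)}$ and extracting the multilinear term. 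Polarization commutes with the $H$-action, so $\tilde g_j \in k[V^{E_j}]^H$; and since $\charac k = 0$, restitution recovers $g_j$ from $\tilde g_j$ up to a nonzero scalar. More generally, $\tilde g_j$ is symmetric within each block $E_j^{(i)}$, and a short check shows that every partial polarization of $g_j$ --- sitting in an arbitrary indexed family of copies of $V$ --- equals $k[V^\psi](\tilde g_j)$ for a suitable set map $\psi$ out of $E_j$. Set $\Sigma = \{\tilde g_1, \dots, \tilde g_s\}$.

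Now fix a finite set $I$; I must show $k[V^I]^H \subset {\rm span}_\FI(k[V^\bullet]^H, \Sigma)^I$. By the classical polarization theorem of H.~Weyl --- which requires $\charac k = 0$ --- the $k$-algebra $k[V^I]^H$ is generated by the polarizations of elements of $k[V^{\oplus p}]^H$, transported into the copies of $V$ indexed by $I$. Any such element is a polynomial $P(g_1, \dots, g_s)$, and because the polarization operators act as derivations on $k[V^{\oplus \bullet}]$, the Leibniz rule expresses every polarization of $P(g_1, \dots, g_s)$ as a polynomial in polarizations of the individual $g_j$. By the previous paragraph, each such polarization of $g_j$ is of the form $k[V^\psi](\tilde g_j)$ for some set map $\psi : E_j \to I$, hence lies in ${\rm span}_\FI(k[V^\bullet]^H, \Sigma)^I$. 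Therefore $k[V^I]^H \subset {\rm span}_\FI(k[V^\bullet]^H, \Sigma)^I$, and since $I$ was arbitrary the FI-span is all of $k[V^\bullet]^H$.

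The substantive input is Weyl's polarization theorem: it supplies the \emph{uniform} bound that one need only polarize invariants of a fixed number ($p = \dim V$) of copies of $V$, no matter how large $I$ is, which is exactly what prevents $k[V^I]^H$ from requiring generators of unbounded degree. This --- together with Hilbert finiteness of $k[V^{\oplus p}]^H$ --- is also where characteristic $0$ is indispensable (linear reductivity of $H$, and the invertibility of the scalars relating polarization and restitution). Everything else is bookkeeping: the full polarization of an invariant is universal, under FI-pullback, among all of its polarizations, and the derivation property of the polarization operators propagates this observation through the subalgebra generated by $g_1, \dots, g_s$.
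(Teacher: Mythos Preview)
Your proof is correct and follows essentially the same approach as the paper's: both fix $p=\dim V$, invoke Hilbert finiteness to get multihomogeneous generators of $k[V^{\oplus p}]^H$, take their full polarizations as the finite generating set $\Sigma$, and then use the Weyl/Procesi polarization theorem together with the universality of full polarizations under FI-pullback to conclude. Your explicit appeal to the Leibniz rule for polarization operators makes transparent a step the paper leaves implicit (``from which one can see that $k[V^n]^H$ is generated by polarizations of elements of $\Omega$''), but the substance is identical.
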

\begin{proof}
By the Hilbert-Nagata theorem, for every finite set $I$, $k[V^I]^H$ is finitely generated as a $k$-algebra.  Let $d = \dim V$, and let $\Omega$ be a finite set of multihomogenous $k$-algebra generators for $k[V^d]^H$.  Then by \cite[Theorem 11.1.1.1]{procesi_book}, for all $n$, $k[V^n]^H$ is generated by polarizations of elements of $k[V^d]^H$, from which one can see that $k[V^n]^H$ is generated by polarizations of elements of $\Omega$.  In other words, $\varinjlim_n k[V^n]^H$ is generated by polarizations of elements of $\Omega$.

Now easily any polarization of a multihomogeneous function $h$ can be obtained as a further polarization of any full polarization of $h$ (up to a scalar multiple), since $\charac k = 0$.  So, letting $\Sigma$ be a finite set containing one full polarization of each element of $\Omega$, $\varinjlim_n k[V^n]^H$ is also generated by $\Sigma$ under polarization.

Now let $f \in k[V^n]^H$ for some $n$.  We claim that $f$ is in the $\mbf{n}$-part of $\mbox{span}_{\FI}(k[V^\bullet]^H, \Sigma)$.  By the above paragraph, there are elements $g_1, \dots, g_r \in \Sigma$ with polarizations $g_1^1, \dots, g_1^{i_1}, \dots, g_r^1, \dots, g_r^{i_r}$ such that $f$ is in the $k$-algebra generated by the $g_j^l$.  Since any further polarization of a full polarization results from vector variable substitutions in the full polarization, we see that each $g_j^l$ is in the $\FI$-algebra span of $\Sigma$.  Using the natural embeddings $k[V^s]^H \subset k[V^t]^H$ whenever $s \le t$, we can assume that all $g_j^l$ are in $k[V^N]^H$ for some $N$.  Then the image of $f$ under the embedding $k[V^n]^H \subset k[V^N]^H$ lies in the $k$-algebra generated by the $g_j^l$.  Mapping this image of $f$ back to $k[V^n]^H$ using the map $(k[V^\bullet]^H)^\phi$ for some $\phi: \mbf{N} \ra \mbf{n}$ which is the identity on $\mbf{n}$, we thus find that $f$ is in the FI-algebra span of $\Sigma$.
\end{proof}

\begin{mylemma}\label{lem_quotient}
Assume $k$ is a field of characteristic 0.  Let $H$ be a reductive group over $k$ which acts rationally on a finitely generated $k$-algebra $R$, and let $J$ be an ideal of $R$ invariant under $H$.  Then $(R/J)^H = R^H/(R^H \cap J)$.
\end{mylemma}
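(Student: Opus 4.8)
The plan is to prove the two inclusions defining the equality $(R/J)^H = R^H/(R^H \cap J)$ inside $R/J$. One inclusion is formal: the composite $R^H \hookrightarrow R \twoheadrightarrow R/J$ kills $R^H \cap J$ and lands in the $H$-invariants of $R/J$ (since the quotient map is $H$-equivariant), so it factors through an injection $R^H/(R^H \cap J) \hookrightarrow (R/J)^H$. The content is the reverse inclusion: every $H$-invariant element of $R/J$ lifts to an $H$-invariant element of $R$.

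The key tool is the Reynolds operator. Since $H$ is reductive and acts rationally on the finitely generated $k$-algebra $R$ (with $\operatorname{char} k = 0$, so $H$ is linearly reductive), there is a Reynolds operator $\mathcal{R}_R : R \to R^H$: a $k$-linear $H$-equivariant projection onto the invariants which is functorial for $H$-equivariant $k$-linear maps. Because $J$ is an $H$-stable ideal, $\mathcal{R}_R(J) \subseteq J$, and in fact $\mathcal{R}_R(J) = J^H = R^H \cap J$; moreover the quotient $R/J$ again carries a Reynolds operator $\mathcal{R}_{R/J}$, and the diagram relating $\mathcal{R}_R$ and $\mathcal{R}_{R/J}$ via the projection $\pi : R \to R/J$ commutes, i.e. $\pi \circ \mathcal{R}_R = \mathcal{R}_{R/J} \circ \pi$. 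Now take $\bar{x} \in (R/J)^H$ and any lift $x \in R$. Then $\pi(\mathcal{R}_R(x)) = \mathcal{R}_{R/J}(\pi(x)) = \mathcal{R}_{R/J}(\bar{x}) = \bar{x}$, since $\mathcal{R}_{R/J}$ fixes invariants. Thus $\mathcal{R}_R(x) \in R^H$ is an invariant lift of $\bar{x}$, which shows $\bar{x}$ lies in the image of $R^H$, completing the proof.

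The main obstacle — really the only subtlety — is justifying the existence and functoriality of the Reynolds operator in the stated generality: one needs that $R$ is a (possibly infinite) union of finite-dimensional $H$-stable subspaces (rationality of the action), on each of which complete reducibility of linear representations of the linearly reductive group $H$ furnishes a canonical invariant complement, and that these projections are compatible so as to glue to a single $k$-linear $H$-equivariant projection $R \to R^H$; its naturality with respect to the $H$-equivariant surjection $\pi$ is then automatic from uniqueness of the invariant complement. This is standard (see e.g. \cite{procesi_book}), so I would cite it rather than reprove it. An alternative, equally short route avoiding explicit mention of Reynolds operators: apply the exactness of the invariants functor $(-)^H$ on rational $H$-modules (again from linear reductivity) to the short exact sequence $0 \to J \to R \to R/J \to 0$, obtaining $0 \to J^H \to R^H \to (R/J)^H \to 0$ exact, which is exactly the claim after identifying $J^H = R^H \cap J$.
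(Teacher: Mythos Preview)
Your proposal is correct and takes essentially the same approach as the paper: both arguments rest on linear reductivity (complete reducibility of rational $H$-modules in characteristic~0). The paper phrases it as choosing an $H$-module complement $C$ to $J$ in $R$ and observing that the projection $R \twoheadrightarrow C \cong R/J$ restricts to a surjection $R^H \twoheadrightarrow C^H \cong (R/J)^H$, while you use the Reynolds operator (or equivalently exactness of $(-)^H$); these are just different packagings of the same splitting.
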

\begin{proof}
The action of $H$ on $R$ is completely reducible by the hypotheses, so we can find a $k[H]$-module complement to $J$ in $R$; call it $C$.  Easily the projection $R \epi C$ restricts to give a surjective map $R^H \epi C^H$.  Then the identical map $R^H \ra (R/J)^H$ is also surjective, obviously with kernel $R^H \cap J$.
\end{proof}

\begin{mycor}
Assume $k$ is a field of characteristic 0.  Let $H$ be a reductive linear algebraic group over $k$.  Then the $\FFG$-algebra $k[H^\bullet]^{\Ad H}$ is finitely generated as an $\FI$-algebra, hence also as an $\FFS$- and $\FFG$-algebra.
\end{mycor}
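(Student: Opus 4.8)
The plan is to exhibit $k[H^\bullet]^{\Ad H}$ as an $\FI$-algebra quotient of $k[V^\bullet]^H$ for a suitable finite-dimensional linear $H$-representation $V$, and then invoke the preceding theorem together with the fact that an $\FI$-algebra quotient of a finitely generated $\FI$-algebra is finitely generated.

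First I would linearize the conjugation action. Since $H$ is a linear algebraic group, fix a closed embedding $H \mono GL(W)$ with $W$ a finite-dimensional $k$-vector space, and put $V := \End(W) \oplus k$ with $H$ acting linearly by $h\cdot(A,t) = (hAh^{-1}, t)$. Under the standard isomorphism $GL(W) \cong \{(A,t) \in V : t\det(A) = 1\}$, which identifies $GL(W)$ with a closed subvariety of $V$ on which the displayed action restricts to honest conjugation (using $\det(hAh^{-1}) = \det A$), we obtain a closed $H$-stable embedding of $H$ into $V$ carrying the $\Ad H$-action to the restriction of the linear $H$-action on $V$. Let $J \subset k[V]$ denote its ($H$-stable) defining ideal.

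Next, for each finite set $I$ the algebra $k[V^I] = k[V]^{\otimes I}$ contains the ideal $J_I$ generated by the images of $J$ under the $|I|$ coordinate inclusions; this $J_I$ cuts out the closed subvariety $H^I \subseteq V^I$ and is stable under the diagonal $H$-action. By Lemma \ref{lem_quotient}, $k[H^I]^{\Ad H} = (k[V^I]/J_I)^H = k[V^I]^H/(k[V^I]^H \cap J_I)$. Because the embedding $H \mono V$ is $H$-equivariant and compatible with the coordinate structure, the associations $I \mapsto k[V^I]^H \cap J_I$ form an $\FI$-ideal $\mf{j}^\bullet$ of $k[V^\bullet]^H$, and the identifications above amount to an isomorphism $k[V^\bullet]^H/\mf{j}^\bullet \cong k[H^\bullet]^{\Ad H}$ of $\FI$-algebras; in particular there is a surjective morphism of $\FI$-algebras $k[V^\bullet]^H \epi k[H^\bullet]^{\Ad H}$.

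By the preceding theorem applied to $V$, the $\FI$-algebra $k[V^\bullet]^H$ is finitely generated, say spanned by a finite set $\Sigma \subset \sqcup_I k[V^I]^H$. A surjective morphism of $\FI$-algebras commutes with all structure maps and with the $k$-algebra operations, and every element of the target lifts through it; hence the image of $\Sigma$ spans $k[H^\bullet]^{\Ad H}$ as an $\FI$-algebra. Finally, a generating set for an $\FI$-algebra also generates it as an $\FFS$- or $\FFG$-algebra, since a sub-$\FFS$- (resp.\ sub-$\FFG$-) algebra is in particular a sub-$\FI$-algebra, so its $\FI$-span is contained in its $\FFS$- (resp.\ $\FFG$-) span. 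I expect the only genuinely delicate point to be the first step — producing an honest \emph{linear} model of the conjugation action together with a closed equivariant embedding of $H$ — after which everything reduces to Lemma \ref{lem_quotient} and formal manipulation of $\FI$-algebras.
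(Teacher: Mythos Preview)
Your proof is correct and follows essentially the same approach as the paper: embed $H$ into $V = \End(W)\oplus k \cong M_d\times\mb{A}^1$ via $A\mapsto(A,\det(A)^{-1})$, apply the preceding theorem to the linear $\Ad H$-action on $V$, and then use Lemma~\ref{lem_quotient} to realize $k[H^\bullet]^{\Ad H}$ as an $\FI$-algebra quotient of $k[V^\bullet]^H$. You are in fact slightly more explicit than the paper in verifying that the ideals $k[V^I]^H\cap J_I$ assemble into an $\FI$-ideal, which the paper leaves implicit.
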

\begin{proof}
Let $d$ be such that $H$ is an affine sub-group variety of $GL_d$ over $k$.  Using the embedding of $GL_d$ into $M_d \times \mb{A}^1$ given by sending $A \in GL_d(k)$ to $(A, \det(A)^{-1})$, we can consider $GL_d$, hence $H$, as an affine subvariety of $M_d \times \mb{A}^1$.  Then $H$ acts linearly on $M_d(k) \times \mb{A}^1(k)$ (as a $k$-vector space) by conjugation on $M_d(k)$; call this action $\Ad H$.  Obviously this action restricts to give the conjugation action $\Ad H$ of $H$ on itself.

By the theorem, $k[M_d \times \mb{A}^1]^{\Ad H}$ is finitely generated as an $\FI$-algebra.  Now let $\mf{a} \subset k[M_d \times \mb{A}^1]$ be the ideal which cuts out $H$ as a variety.  Then by Lemma \ref{lem_quotient}, for all finite sets $I$,
\[
k[H^I]^{\Ad H} = \frac{k[(M_d \times \mb{A}^1)^I]^{\Ad H}}{\mf{a}^I \cap k[(M_d \times \mb{A}^1)^I]^{\Ad H}}.
\]
Hence we can exhibit $k[H^\bullet]^{\Ad H}$ as a quotient of $k[(M_d \times \mb{A}^1)^\bullet]^{\Ad H}$, and obviously any quotient of a finitely generated FI-algebra is finitely generated.
\end{proof}

Recall that if $A^\bullet$ is finitely generated as an $\FI$-algebra, then there is a surjective morphism $\bigotimes_i F_{\FI}(m_i) \ra A^\bullet$ for some finite sequence of integers $(m_i)$.  We now show that the kernel of such a morphism is always finitely generated as an $\FI$-ideal, hence any finitely generated $\FI$-algebra is actually finitely presented.  To do this, we prove a statement analogous to the Noetherian property of polynomial rings over $k$, as follows.

\begin{myprop}
Let $(m_i)$ be a finite sequence of integers, and let $A^\bullet = \otimes_i F_{\FI}(m_i)$.  Let $\mf{a}^\bullet$ be an FI-ideal of $A^\bullet$.  Then $\mf{a}^\bullet$ is finitely generated.
\end{myprop}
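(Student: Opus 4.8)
The plan is to transport the classical Gröbner-basis proof that polynomial rings are Noetherian into the $\FI$-setting, with a Dickson/Higman-type well-quasi-ordering playing the role of Dickson's lemma. Write $A^I = k[X^I]$, the polynomial ring whose variable set is $X^I := \bigsqcup_i \Hom_\FI(\mbf{m_i}, I)$; for $\phi \in \Hom_\FI(I,J)$ the map $A^\phi$ carries the variable indexed by $(i,\psi)$ to the variable indexed by $(i,\phi\circ\psi)$, so $A^\phi$ takes variables to variables. A first reduction: it suffices to prove that $\mf{a}^\bullet$ is finitely generated as an ideal for the subcategory of injections, and in fact (after fixing bijections $I \cong \mbf{|I|}$) for the subcategory $\FI'$ of order-preserving injections. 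Indeed, $\mf{a}^\bullet$, being an $\FI$-ideal, is a fortiori stable under $\FI'$; a finite subset of $\mf{a}^\bullet$ generating it under $\FI'$ a fortiori generates it under all of $\FI$; and the $\FI$-ideal generated by any subset of $\mf{a}^\bullet$ lies inside $\mf{a}^\bullet$. The gain is that each $A^\iota$ with $\iota \in \FI'$ is an \emph{injective} $k$-algebra homomorphism, hence commutes with the formation of leading terms.

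Next I would fix, for each finite set $I$, a monomial order $<_I$ on $A^I$, chosen compatibly: order the variable $x_{(i,\psi)}$ (with $I$ identified with $\mbf{n}$ and $\psi$ viewed as the tuple $(\psi(1),\dots,\psi(m_i))$) first by $i$ and then reverse-lexicographically on the tuple, and pass to the induced graded reverse-lexicographic order on monomials. One checks $u <_I v \Rightarrow A^\iota(u) <_J A^\iota(v)$ for $\iota \in \FI'$, and since $A^\iota$ is injective this gives $A^\iota(\mathrm{in}(f)) = \mathrm{in}(A^\iota(f))$ for all $f \in A^I$. Consequently the association $I \mapsto \bigl(\text{ideal of }A^I\text{ generated by }\{\mathrm{in}(f) : f\in\mf{a}^I\}\bigr)$ is again an $\FI'$-ideal, which I call the initial ideal $\mathrm{in}(\mf{a})^\bullet$; it is a monomial ideal in each degree.

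The technical heart is a well-quasi-ordering statement: \emph{the set $\bigsqcup_I\{\text{monomials of }A^I\}$, preordered by declaring $u \preceq u'$ whenever $A^\iota(u)$ divides $u'$ for some $\iota \in \FI'$, contains no infinite antichain.} Granting this, the standard argument shows $\mathrm{in}(\mf{a})^\bullet$ is generated as an $\FI'$-ideal by finitely many monomials: otherwise one could recursively pick monomials $v_1,v_2,\dots$ with $v_k$ not in the $\FI'$-ideal generated by $v_1,\dots,v_{k-1}$, and any pair $v_j\preceq v_k$ with $j<k$ contradicts the choice. To prove the well-quasi-ordering statement one encodes a monomial of $A^I$ as a finite word over the alphabet of variable-symbols $(i;a_1,\dots,a_{m_i})$, normalizes by relabelling the finitely many elements of $I$ that occur to an initial segment of $\N$ in increasing order, and appeals to Higman's lemma (finite words over a well-quasi-ordered alphabet are well-quasi-ordered under the subword order), the alphabet being well-quasi-ordered because bounded-length tuples over $\N$ are (Dickson). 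This is the main obstacle, and genuine care is needed: Higman's lemma compares letters independently, whereas here the relabelling $\iota$ must be applied \emph{uniformly} across all letters of the word, so the encoding has to be arranged so that a single global reindexing is recovered. This bookkeeping is exactly the combinatorial input developed in the Noetherianity-up-to-symmetry literature; everything else is a routine transcription of the Gröbner formalism.

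Finally I would lift: if $\mathrm{in}(\mf{a})^\bullet$ is generated as an $\FI'$-ideal by $\mathrm{in}(f_1),\dots,\mathrm{in}(f_r)$ with $f_l\in\mf{a}^{I_l}$, then $f_1,\dots,f_r$ generate $\mf{a}^\bullet$ as an $\FI'$-ideal. Indeed, for $f\in\mf{a}^I$ the leading monomial lies in $\mathrm{in}(\mf{a})^I$, hence is divisible by $A^\iota(\mathrm{in}(f_l)) = \mathrm{in}(A^\iota(f_l))$ for some $l$ and some $\iota \in \FI'$; subtracting the appropriate monomial multiple of $A^\iota(f_l)$ yields $f'\in\mf{a}^I$ with strictly smaller leading monomial, and since $<_I$ well-orders the monomials of $A^I$ this terminates, exhibiting $f$ as an $A^I$-combination of images of the $f_l$. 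Hence $\mf{a}^\bullet$ is finitely generated as an $\FI'$-ideal, therefore as an $\FI$-ideal; the same argument applies to $\FFS$- and $\FFG$-algebras, the only change being that the alphabet in Higman's lemma consists of (reduced) words rather than tuples.
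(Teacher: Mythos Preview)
Your approach is correct in outline but takes a much heavier route than the paper's. The paper's argument is entirely elementary: after reducing to $A^\bullet = F_{\FI}(M)^{\otimes n}$, it asserts that any $\FI$-ideal $\mf{a}^\bullet$ is already generated by its degree-$\mbf{M}$ part $\mf{a}^{\mbf{M}}$, invoking the universal property of free $\FI$-algebras to identify $A^\bullet/\mf{a}^\bullet$ with $A^\bullet/\mf{b}^\bullet$ (where $\mf{b}^\bullet$ is the $\FI$-ideal generated by $\mf{a}^{\mbf{M}}$); finite generation then follows from the ordinary Hilbert basis theorem for the single Noetherian ring $A^{\mbf{M}}$. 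Note this leans on the fact that the paper's $\FI$ is the category of finite sets with \emph{all} maps, not only injections.

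That said, the paper's key step appears to have a gap. Take $M=n=1$, so $A^I = k[x_j : j\in I]$, and set $\mf{a}^I = (x_i - x_j : i,j\in I)$. This is an $\FI$-ideal with $\mf{a}^{\mbf{1}}=0$ but $\mf{a}^{\mbf{2}} = (x_1-x_2) \neq 0$, so $\mf{a}^\bullet$ is \emph{not} generated by $\mf{a}^{\mbf{M}}$, and the claimed induced map $A^\bullet/\mf{a}^\bullet \to A^\bullet/\mf{b}^\bullet$ does not exist. (The proposition itself survives: this ideal is generated by the single element $x_1-x_2 \in \mf{a}^{\mbf{2}}$.) Your Gr\"obner/well-quasi-order route sidesteps this entirely and in fact proves the stronger statement of finite generation already over the subcategory of order-preserving injections. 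The cost is exactly the wqo claim you flag: for $M>1$ the reindexing $\iota$ must be applied uniformly to all components of each variable-index, so this is not a direct consequence of Higman's lemma but needs the genuinely harder combinatorial input from the equivariant-Noetherianity literature (Cohen, Aschenbrenner--Hillar, Draisma, et al.). Your closing remark that ``the same argument applies to $\FFS$- and $\FFG$-algebras'' is likewise more delicate than a change of alphabet suggests, since the variable set there is $\FG(I)^m$ and the relevant wqo under uniform relabelling is again not immediate.
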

\begin{proof}
Let $M = \max\{m_i\}$.  For fixed $i$, let $\iota: \mbf{m_i} \ra \mbf{M}$ be the canonical injection, and define a morphism $\Theta^\bullet: F_{\FI}(M) \ra F_{\FI}(m_i)$ by sending $x_{id_{\mbf{M}}}$ to $x_\iota$.  For any $\FI$-ideal $\mf{b}^\bullet$ of $F_{\FI}(m_i)$, we can define an $\FI$-ideal $(\Theta^\bullet)^{-1}(\mf{b}^\bullet)$ of $F_{\FI}(M)$ by setting $\left((\Theta^\bullet)^{-1}(\mf{b}^\bullet)\right)^I = (\Theta^I)^{-1}(\mf{b}^I)$.  Let $\pi: \mbf{M} \ra \mbf{m_i}$ be some map such that $\pi \circ \iota = id_{\mbf{m_i}}$; then $x_{id_{\mbf{m_i}}} = (F_{\FI}(m_i))^\pi(x_\iota)$, so $\Theta^\bullet$ is surjective.  Hence if $(\Theta^\bullet)^{-1}(\mf{b}^\bullet)$ is finitely generated, then so is $\mf{b}^\bullet$.

Thus WLOG all $m_i = M$ for some integer $M$.  Then $A^\bullet = F_{\FI}(M)^{\otimes n}$ for some integer $n$.  Now let $\mf{b}^\bullet$ be the $\FI$-ideal of $A^\bullet$ generated by $\mf{a}^{\mbf{M}}$.  
Then the identity maps $(A^\bullet/\mf{a}^\bullet)^{\mbf{M}} \ra (A^\bullet/\mf{b}^\bullet)^{\mbf{M}}$, $(A^\bullet/\mf{b}^\bullet)^{\mbf{M}} \ra (A^\bullet/\mf{a}^\bullet)^{\mbf{M}}$ induce maps $A^\bullet/\mf{a}^\bullet \ra A^\bullet/\mf{b}^\bullet$, $A^\bullet/\mf{b}^\bullet \ra A^\bullet/\mf{a}^\bullet$ which are inverses to each other and which agree with the projections $A^\bullet \ra A^\bullet/\mf{a}^\bullet$, $A^\bullet \ra A^\bullet/\mf{b}^\bullet$, by properties of free $\FI$-algebras.  Hence $\mf{a}^\bullet = \mf{b}^\bullet$, so $\mf{a}^\bullet$ is generated by $\mf{a}^{\mbf{M}}$ as an $\FI$-ideal.

From the definition, $A^{\mbf{M}}$ is a finitely generated $k$-algebra, hence is Noetherian.  Thus $\mf{a}^{\mbf{M}}$ is finitely generated as an ideal in $A^{\mbf{M}}$.  Any finite set of generators then provides a finite set of generators for $\mf{a}^\bullet$.
\end{proof}

Now for fixed $H$, by choosing a finite set of generators for $k[H^\bullet]^{\Ad H}$ as an $\FFG$-algebra, as well as a finite set of generators for the $\FFG$-ideal of relations between those generators (or even a set of generators up to radical), we can define pseudocharacters for $H$ in terms of a finite set of functions satisfying finitely many relations.  This technique was first demonstrated in \cite[Remark 11.8]{lafforgue_french}, wherein V.\ Lafforgue implicitly gives a finite presentation for $k[GL_n^\bullet]^{\Ad GL_n}$ and explains how it implies Taylor's original result on $GL_n$-pseudocharacters.  We further illustrate the technique with examples in Section \ref{examples} below.

\section{Explicit Pseudocharacters for Classical Groups}\label{examples}

\subsection{(General) Orthogonal Group}
We now present new results which establish pseudocharacters for the orthogonal and general orthogonal groups.  Assume $k$ is a field of characteristic 0.

Let $GO_n(k) = \{A \in M_n(k) \mid \mbox{for some $\lambda \in k^\times$, $AA^t = \lambda I$}\}$ be the $n$-dimensional general orthogonal group.  It is a connected reductive algebraic group, and it is in fact an affine subvariety of $GL_n$, hence of $M_n \times \mb{A}^1$.  Define a function $\lambda: GO_n(k) \ra k$ by $AA^t = \lambda(A)I$.  Then $\lambda \in k[GO_n]^{\Ad GO_n}$.

\begin{myprop}\label{go_n_gens}
$k[GO_n^\bullet]^{\Ad GO_n}$ is generated as an FFG-algebra by $\tr$ and $\lambda$.
\end{myprop}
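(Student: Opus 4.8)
The plan is to reduce the statement to a known result from classical invariant theory describing generators for the orthogonal invariants of tuples of matrices. Concretely, the key input is Procesi's theorem on $O_n$-invariants (and its extension to $GO_n$): the ring $k[M_n^I]^{O_n}$, where $O_n$ acts by simultaneous conjugation, is generated by the ``mixed trace'' functions $A_{i_1}\cdots A_{i_r} \mapsto \tr(A_{i_1}\cdots A_{i_r})$ together with the ``generalized trace'' functions built using the bilinear form, i.e.\ functions of the form $(A_j) \mapsto \tr(A_{i_1}^{\epsilon_1}\cdots A_{i_r}^{\epsilon_r})$ with each $\epsilon_s \in \{1, t\}$ (transpose). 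First I would recall that on $GO_n$ one has $A^t = \lambda(A)A^{-1}$, so that the transpose of any generator variable can be rewritten, at the cost of introducing the scalar $\lambda$, in terms of the inverse — which is available because we are working with the \emph{FFG}-algebra structure, where the morphism $x_n \mapsto y_n^{-1}$ from the generating lemma is allowed. Thus every transposed-trace generator of $k[GO_n^I]^{\Ad GO_n}$ is expressible as a polynomial in $\lambda$ and ordinary traces of words in the $A_i$ and their inverses, which is exactly the claim that $\tr$ and $\lambda$ generate as an FFG-algebra.

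The steps, in order, are: (1) State precisely the invariant-theoretic generation theorem for $k[M_n^I]^{O_n}$ (Procesi; cited in the paper as \cite{procesi}, with the $GO_n$-version perhaps needing \cite{aslaksen_tan_zhu}), noting that $GO_n$-invariants of $M_n^I$ are obtained by adjoining $\lambda$-type data — or more cleanly, work directly on $GO_n^I \subset GL_n^I$. (2) Using Example \ref{invariant_example} and the surjection $k[(M_n\times\mb{A}^1)^\bullet]^{\Ad GO_n} \epi k[GO_n^\bullet]^{\Ad GO_n}$ (as in the proof of the corollary on finite generation), reduce to checking that the images of the classical generators lie in $\mathrm{span}_{\FFG}(k[GO_n^\bullet]^{\Ad GO_n}, \{\tr, \lambda\})$. (3) For a generator $\tr(A_{i_1}^{\epsilon_1}\cdots A_{i_r}^{\epsilon_r})$, substitute $A^t = \lambda(A)A^{-1}$ at each spot where $\epsilon_s = t$; since $\lambda$ is central, pull all the $\lambda$-factors out front, obtaining $\lambda(A_{j_1})\cdots\lambda(A_{j_m})\cdot\tr(\text{word in the }A_i^{\pm 1})$. (4) Interpret this expression as lying in the FFG-algebra span: the word in the $A_i^{\pm1}$ of length $\ell$ is the image of $x_{\mathrm{id}} \in F_{\FFG}(1)^{\mbf 1}$, i.e.\ of $\tr \in k[GO_n^{\mbf 1}]^{\Ad GO_n}$, under the FFG-morphism $\FG(\mbf 1)\to\FG(I)$ sending the generator to that word (built from the generating morphisms of Lemma~2.1 applied to $\FG$: inversions, the multiplication morphism $x_n\mapsto y_ny_{n+1}$, and set maps); each $\lambda(A_{j_s})$ is likewise the image of $\lambda\in k[GO_n^{\mbf1}]^{\Ad GO_n}$ under the set-map morphism $\FG(\mbf1)\to\FG(I)$ picking out the coordinate $j_s$; and products land in the span since each $A^I$ is a $k$-algebra. (5) Conclude that $\tr$ and $\lambda$ generate.

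The main obstacle I expect is step (1) together with the bookkeeping in step (4): I need the precise form of Procesi's generation theorem for the \emph{general} orthogonal group (not just $O_n$), and I must be careful that the ``words'' appearing are genuinely compositions of the allowed FFG-generating morphisms — in particular that inverses and the merge map $x_n \mapsto y_ny_{n+1}$ suffice to realize an arbitrary word $A_{i_1}^{\pm1}\cdots A_{i_r}^{\pm1}$ as the image of a single generator variable, which is where Lemma~2.1 (the FFG version) does the real work. A secondary subtlety is confirming that $\lambda$ itself, and not merely $\lambda$ composed with something, is the right generator: one should check $\lambda$ is multiplicative ($\lambda(AB) = \lambda(A)\lambda(B)$) so that the FFG-structure maps interact with it correctly, though this is not strictly needed for generation. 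None of these is deep; the content is entirely the translation of the classical first fundamental theorem into the FFG-algebra language set up in Section~\ref{general_results}.
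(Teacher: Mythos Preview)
Your approach is essentially the same as the paper's, and the argument is correct. Two small points where the paper is more explicit than your sketch: (i) you worry about needing a $GO_n$-version of Procesi's theorem, but the paper avoids this simply by noting $O_n \subset GO_n$ gives $k[M_n^m]^{\Ad GO_n} \subset k[M_n^m]^{\Ad O_n}$, and Procesi's generators $\tr(M)$ for the latter are already $GO_n$-invariant, so they generate the former too; (ii) when you pass through $k[(M_n\times\mb{A}^1)^m]^{\Ad GO_n}$ you pick up extra generators $\det^{-1}(A_i)$ from the $\mb{A}^1$-coordinates, which the paper disposes of by writing $\det^{-1}(A)=\det(A^{-1})$ as a polynomial in $\tr(A^{-1}),\dots,\tr(A^{-n})$, hence in the FFG-span of $\tr$ alone.
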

\begin{proof}
Since $GO_n(k) \supset O_n(k)$, $k[M_n^\bullet]^{\Ad GO_n} \subset k[M_n^\bullet]^{\Ad O_n}$.  By Procesi's results on the invariants of $O_n(k)$ acting on matrices by conjugation \cite[Theorem 7.1]{procesi}, for all $m$, $k[M_n^m]^{\Ad O_n}$ is generated as a $k$-algebra by invariants $\tr(M)$, where $M \in \FS(\{A_1, A_1^t, \dots, A_m, A_m^t\})$.  The $\tr(M)$ are obviously also $GO_n(k)$-invariants, so $k[M_n^m]^{\Ad GO_n}$ has the same generators.  Then $k[(M_n \times \mb{A}^1)^m]^{\Ad GO_n}$ is generated as a $k$-algebra by the $\tr(M)$ and by the coordinate functions for the $m$ copies of $\mb{A}^1$, which we will denote $\det^{-1}(A_1), \dots, \det^{-1}(A_m)$.  By Lemma \ref{lem_quotient}, $k[GO_n^m]^{\Ad GO_n}$ is a quotient of $k[(M_n \times \mb{A}^1)^m]^{\Ad GO_n}$ for all $m$, so it is also generated by the invariants $\tr(M)$ and $\det^{-1}(A_i)$.  Then using the identity $A^t = \lambda(A)A^{-1}$ for $A \in GO_n(k)$, we see that any invariant $\tr(M)$ is in the FFG-algebra generated by $\tr$ and $\lambda$.  Also, using the identity $\det^{-1}(A) = \det(A^{-1})$ and the fact that we can express $\det(A^{-1})$ in terms of $\tr(A^{-1}), \dots, \tr(A^{-n})$, we see that any invariant $\det^{-1}(A_i)$ is in the FFG-algebra generated by $\tr$.
\end{proof}

The relations between the invariants are more complicated to describe.  We first summarize Procesi's result on relations between the generators $\tr(M)$ of $k[M_n^m]^{\Ad O_n} = k[M_n^m]^{\Ad GO_n}$.

Let $R$ be the polynomial ring over $k$ with indeterminates $T_M$ as $M$ varies over $\FS(\{A_1, A_1^t, \dots, A_m, A_m^t\})$, except that we make the identifications $T_{MN} = T_{NM}$ and $T_M = T_{M^t}$ for all words $M$ and $N$ (where $M^t$ is defined in the obvious way).  Let $\pi: R \ra k[M_n^m]^{\Ad GO_n}$ be the $k$-algebra homomorphism sending each $T_M$ to $\tr(M)$, which by \cite[Theorem 7.1]{procesi} is surjective.

Given $M_1, M_2, \dots, M_{n+1} \in \FS(\{A_1, A_1^t, \dots, A_m, A_m^t\})$ and an integer $0 \le j \le (n+1)/2$, define $F_{j, n+1}(M_1, M_2, \dots, M_{n+1}) \in R$ as follows.  Let $s$ be given by $n + 1 = 2j + s$.  Let $S$ be a set of formal symbols $(a, b)$, where each $a$ and $b$ is one of the formal symbols $u_1, \dots, u_{n+1}, v_1, \dots, v_{n+1}$.  Let $D^j = \sum_{\sigma \in S_{n+1}} \sgn(\sigma) D^j_\sigma$ be the following $(n+1) \times (n+1)$ determinant, as a function of symbols in $S$:
\[
\begin{vmatrix}
(u_1, u_{j+s+1}) & (u_1, u_{j+s+2}) & \cdots & (u_1, u_{n+1}) & (u_1, v_{j+1}) & (u_1, v_{j+2}) & \cdots & (u_1, v_{n+1}) \\
\vdots \\
(u_{j+s}, u_{j+s+1}) & (u_{j+s}, u_{j+s+2}) & \cdots & (u_{j+s}, u_{n+1}) & (u_{j+s}, v_{j+1}) & (u_{j+s}, v_{j+2}) & \cdots & (u_{j+s}, v_{n+1}) \\
(v_1, u_{j+s+1}) & (v_1, u_{j+s+2}) & \cdots & (v_1, u_{n+1}) & (v_1, v_{j+1}) & (v_1, v_{j+2}) & \cdots & (v_1, v_{n+1}) \\
\vdots \\
(v_j, u_{j+s+1}) & (v_j, u_{j+s+2}) & \cdots & (v_j, u_{n+1}) & (v_j, v_{j+1}) & (v_j, v_{j+2}) & \cdots & (v_j, v_{n+1})
\end{vmatrix}
\]
Next, using the formal identities $(a, b) = (b, a)$ and allowing the symbols $(a, b)$ to commute with each other, write each monomial $D^j_\sigma$ of $D^j$ in the form
\[
D^j_\sigma = (w_{i_1}, \bar{w}_{i_2})(w_{i_2}, \bar{w}_{i_3}) \dots (w_{i_j}, \bar{w}_{i_1}) \cdot (w_{j_1}, \bar{w}_{j_2})(w_{j_2}, \bar{w}_{j_3}) \dots (w_{j_s}, \bar{w}_{j_1}) \dots
\]
where $w_a$ stands for either $u_a$ or $v_a$, and by definition, $\bar{u}_a = v_a$ and $\bar{v}_a = u_a$.  Now define $T^j_\sigma(M_1, \dots, M_{n+1})$ by
\[
T^j_\sigma(M_1, \dots, M_{n+1}) = T_{N_{i_1}N_{i_2} \dots N_{i_j}} T_{N_{j_1}N_{j_2} \dots N_{j_j}} \dots,
\]
where $N_a = M_a$ or $N_a = M_a^t$, according to the inductively defined rules:
\begin{itemize}
  \item $N_{i_1} = M_{i_1}$, if $w_{i_1} = v_{i_1}$; else $N_{i_1} = M_{i_1}^t$
  \item Set $N_{i_{t+1}}$ to be the same type as $N_{i_t}$ (transposed or not transposed) if and only if $w_{i_t}$ and $w_{i_{t+1}}$ stand for instances of the same letter ($u$ or $v$).
\end{itemize}
Then
\[
F_{j, n+1}(M_1, \dots, M_{n+1}) := \sum_{\sigma \in S_{n+1}} \sgn(\sigma) T^j_\sigma(M_1, \dots, M_{n+1})
\]
is the result of replacing each $D^j_\sigma$ with $T^j_\sigma$ in $D^j$.  Because $T_{MN} = T_{NM}$ and $T_M = T_{M^t}$ by assumption, the functions $T^j_\sigma$ are well-defined, hence so is $F_{j, n+1}$.  Note that $F_{0, n+1}(M_1, \dots, M_{n+1})$ reduces to (\ref{gl_n_relation}), the non-trivial relation for $GL_n$-pseudocharacters.

\begin{mythm}[{\cite[Theorem 8.4(a)]{procesi}}]
$\ker(\pi)$ is the ideal of $R$ generated by the $F_{j, n+1}(M_1, \dots, M_{n+1})$, $0 \le j \le (n+1)/2$, as the $M_i$ vary over $\FS(\{A_1, A_1^t, \dots, A_m, A_m^t\})$.
\end{mythm}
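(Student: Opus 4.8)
The plan is to deduce the theorem from the first and second fundamental theorems of invariant theory for the orthogonal group, which is Procesi's route. The first move is the standard dictionary: using the defining bilinear form to identify $M_n \cong V \otimes V$ with $V = k^n$, the conjugation action of $O_n$ becomes the natural action of $O(V)$ on $V\otimes V$, transposition of a matrix becomes the flip of the two tensor factors, and $\tr$ of a word $M\in\FS(\{A_1,A_1^t,\dots,A_m,A_m^t\})$ becomes a complete contraction of tensor slots by the form. With this translation, \cite[Theorem 7.1]{procesi} is exactly the first fundamental theorem --- every $\Ad O_n$-invariant is a $k$-linear combination of such trace monomials --- and the inclusion $F_{j,n+1}(M_1,\dots,M_{n+1})\in\ker(\pi)$ becomes transparent: expanding via the dictionary rewrites $\pi(F_{j,n+1})$ as an alternating sum of complete contractions which is a sum of $(n+1)\times(n+1)$ minors of Gram matrices of vectors in $k^n$, hence vanishes identically. (Equivalently it is the image of the full antisymmetrizer on $n+1$ strands under the map from the Brauer algebra to $\End_{O_n}(V^{\otimes\bullet})$, and that antisymmetrizer is zero because $\dim V = n$.)

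For the converse --- that these elements generate $\ker(\pi)$ --- I would invoke the second fundamental theorem for $O(V)$: every polynomial relation among the scalar products of a tuple of vectors in $k^n$ is a consequence of the $(n+1)\times(n+1)$ minors of their Gram matrix (Weyl; see \cite{procesi_book}), or equivalently the kernel of the surjection $\mathrm{Br}_d(n)\epi\End_{O_n}(V^{\otimes d})$ is generated, as a two-sided ideal, by the antisymmetrizer on any $n+1$ of the $d$ strands. Because $\charac k = 0$, the Reynolds operator of $O(V)$ commutes with the projections onto the symmetric pieces $\mathrm{Sym}^{d_a}(V\otimes V)\subseteq(V\otimes V)^{\otimes d_a}$ cut out by the individual matrices $M_a$, so a complete-reducibility argument of the same flavor as Lemma \ref{lem_quotient} lets one transport the SFT relations through the dictionary and conclude that $\ker(\pi)$ is generated as an ideal by the trace expressions arising from the $(n+1)$-strand antisymmetrizer. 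By substitution it then suffices to exhibit generators that are multilinear in $n+1$ matrix variables, which is precisely the form in which the $F_{j,n+1}$ are written.

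It remains to identify the trace expressions produced by the $(n+1)\times(n+1)$ Gram minors with the $F_{j,n+1}$, as the $M_i$ range over all words and $j$ over $0\le j\le (n+1)/2$. The picture to make rigorous is the following: such a minor singles out $n+1$ ``row'' slots and $n+1$ ``column'' slots among the $2(n+1)$ tensor slots carried by $M_1,\dots,M_{n+1}$; a matrix is \emph{straight} if it contributes one row slot and one column slot, which makes it an ordinary letter $M_a$ inside a word, and \emph{crossed} if it contributes two row slots or two column slots, which forces a transpose $M_a^t$ --- and $j$ counts the matrices contributing two row slots (equivalently two column slots), so that $n+1 = 2j + s$ with $s$ the number of straight matrices. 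Expanding the determinant $D^j$ and reading each monomial $D^j_\sigma$ as a disjoint union of cycles, then running each cycle back through the contraction dictionary with its transpose bookkeeping, yields exactly $T^j_\sigma(M_1,\dots,M_{n+1})$; the two inductive bullet rules defining $T^j_\sigma$ are simply the record of which tensor slot of $M_a$ the contraction enters and leaves by. Summing over $S_{n+1}$ with signs gives $F_{j,n+1}$, and letting the $M_i$ and $j$ vary recovers all the Gram-minor relations.

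The main obstacle is this last combinatorial step: arranging the slot/cycle correspondence so that the transpose-tracking rules for $T^j_\sigma$ fall out of the determinant expansion, and verifying that the particular family of minors indexed by $j\le (n+1)/2$ already captures the image of all the relations coming from the second fundamental theorem. Everything else is the standard FFT/SFT package plus complete reducibility in characteristic $0$; the cleanest way to keep the bookkeeping honest is to work in the Brauer category, where the $F_{j,n+1}$ appear as the images of a single diagrammatic element --- the $(n+1)$-strand antisymmetrizer --- under the various ways its endpoints can be distributed among the $n+1$ copies of $V\otimes V$ supplied by the matrices, indexed by the parameter $j$.
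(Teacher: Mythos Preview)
The paper does not prove this theorem at all: it is quoted verbatim from Procesi \cite[Theorem 8.4(a)]{procesi} and used as a black box, with no argument given. So there is nothing to compare your proposal against in this paper.

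That said, your outline is essentially Procesi's own route in \cite{procesi}: pass from matrices to $V\otimes V$ via the form, invoke the first and second fundamental theorems for $O(V)$ (equivalently, work in the Brauer algebra and use that the kernel of $\mathrm{Br}_d(n)\to\End_{O_n}(V^{\otimes d})$ is generated by the $(n{+}1)$-strand antisymmetrizer), and then translate the resulting Gram-minor relations back through the contraction dictionary to obtain the $F_{j,n+1}$. You have correctly located the one genuinely nontrivial step: the combinatorial identification showing that, as $j$ ranges over $0\le j\le (n{+}1)/2$ and the $M_i$ over all words, the $F_{j,n+1}$ already account for \emph{all} the ways of distributing the $2(n{+}1)$ tensor slots among rows and columns of the Gram minor. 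Your ``straight/crossed'' bookkeeping is the right picture, but as written it is a sketch, not a proof --- in particular you have not argued why one may restrict to the range $j\le (n{+}1)/2$ (this uses the symmetry $T_M = T_{M^t}$ baked into $R$, which pairs up the cases $j$ and $n{+}1{-}j$), nor why the specific determinant $D^j$ with its particular row/column labeling captures a general placement of the antisymmetrizer up to the identifications already imposed on $R$. Filling in that bijection carefully is exactly the content of Procesi's Section 8, and it is more delicate than your last paragraph suggests; if you intend to supply a self-contained proof rather than a citation, that is where the work lies.
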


Now let $\psi$ be the composition of $\pi$ with the map $k[M_n^m]^{\Ad GO_n} \mono k[(M_n \times \mb{A}^1)^m]^{\Ad GO_n} \epi k[GO_n^m]^{\Ad GO_n}$, which is still surjective by the proof of Proposition \ref{go_n_gens}.  Intuitively, one should expect $\ker(\psi)$ to be $\ker(\pi)$ plus the relations of the form $T_{MNN^tP} = T_{MP}T_{NN^t}$, since $GO_n$ is defined by the condition that $NN^t$ is a scalar matrix for all $N \in GO_n(k)$.  The next proposition shows that this is indeed the case, at least up to radical.

\begin{myprop}\label{prop_orthogonal_invariants}
$\ker(\psi)$ is the radical of the ideal generated by $\ker(\pi)$ and the relations $T_{MNN^tP} - T_{MP}T_{NN^t}$ for $M, N, P \in \FS(\{A_1, A_1^t, \dots, A_m, A_m^t\})$.
\end{myprop}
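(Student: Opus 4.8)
The plan is to prove the two containments $\sqrt{\mathfrak{b}}\subseteq\ker(\psi)$ and $\ker(\psi)\subseteq\sqrt{\mathfrak{b}}$, where $\mathfrak{b}\subseteq R$ denotes the ideal generated by $\ker(\pi)$ together with the elements $T_{MNN^tP}-T_{MP}T_{NN^t}$. The first is the routine direction. Since $\psi$ factors through $\pi$ we have $\ker(\pi)\subseteq\ker(\psi)$, and to see that the remaining generators lie in $\ker(\psi)$ one evaluates $T_{MNN^tP}-T_{MP}T_{NN^t}$ under $\psi$ at a point of $GO_n^m$: there the word $N$ evaluates to an element of $GO_n$, so $NN^t$ evaluates to a scalar matrix, and the computation collapses to $0$. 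Hence $\mathfrak{b}\subseteq\ker(\psi)$. Moreover $\ker(\psi)$ is a radical ideal, because $GO_n$ is smooth, so $GO_n^m$ is reduced and $k[GO_n^m]$ --- hence its ring of $\Ad GO_n$-invariants, hence the subring $R/\ker(\psi)$ of that --- is reduced. Therefore $\sqrt{\mathfrak{b}}\subseteq\ker(\psi)$.

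For the reverse containment I would use that $\ker(\psi)$, being radical, equals $I(V(\ker(\psi)))$, so that it suffices to prove the set-theoretic inclusion $V(\mathfrak{b})\subseteq V(\ker(\psi))$ of closed subsets of $\Spec R$, working over $\overline{k}$ (this is harmless, since $R$ is finitely generated over the characteristic-$0$ field $k$ and all objects in sight are defined over $k$). Because $\psi$ is the composite of $\pi\colon R\twoheadrightarrow k[M_n^m]^{\Ad O_n}=k[M_n^m]^{\Ad GO_n}$ (the equality is noted in the proof of Proposition~\ref{go_n_gens}) with restriction of functions along $GO_n^m\hookrightarrow M_n^m$, the set $V(\ker(\psi))$ is the Zariski closure, inside $M_n^m/\!\!/O_n=V(\ker(\pi))$, of the locus of points $[(g_1,\dots,g_m)]$ with all $g_i\in GO_n$. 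On the other hand, since $\ker(\pi)\subseteq\mathfrak{b}$, a point of $V(\mathfrak{b})$ is a closed $O_n$-orbit $[(B_1,\dots,B_m)]$ in $M_n^m(\overline{k})$ on which every relation $\tr(MNN^tP)=\tr(MP)\,\tr(NN^t)$ holds, and the task reduces to showing that such a point lies in that closure.

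The crux, which I would isolate as a lemma, runs as follows. For a \emph{closed} $O_n$-orbit the $*$-subalgebra $\mathcal{B}\subseteq M_n(\overline{k})$ generated by all $B_i$ and $B_i^t$ is semisimple --- the orthogonal counterpart of the fact that a tuple of matrices has a closed $GL_n$-orbit iff the algebra it generates is semisimple, a standard fact in the invariant theory of the classical groups. In characteristic $0$ the trace form of $M_n$ restricts to a nondegenerate bilinear form on $\mathcal{B}$; and since every word $M,P$ in the $A_i,A_i^t$ evaluates into $\mathcal{B}$, the vanishing of the relations forces $N(B)N(B)^t=\mu_N e$ for each word $N$, where $e$ is the unit idempotent of $\mathcal{B}$ --- necessarily symmetric, since $e^t$ is again a two-sided unit of $\mathcal{B}$ --- and $\mu_N\in\overline{k}$. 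Restricting everything to $W=\mathrm{im}(e)$, on which the bilinear form is nondegenerate, $\mathcal{B}$ becomes a $*$-subalgebra of $\End(W)$ and each $B_i|_W$ satisfies $(B_i|_W)(B_i|_W)^*=\mu_i\,\mathrm{id}_W$; in particular $B_i|_W\in GO(W)$ whenever $\mu_i\neq0$. One then exhibits the point $[(B_1,\dots,B_m)]$ in the closure of the $GO_n^m$-locus by extending the operators $B_i|_W$ across the orthogonal decomposition $V=W\oplus W^\perp$ and passing to a limit, accounting separately for the words $N$ with $\mu_N=0$ and for the complementary summand $W^\perp$.

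I expect the obstacle to be concentrated in this last step: converting the closed-orbit hypothesis into semisimplicity of the $*$-algebra, and then bookkeeping the idempotent $e$, the words $N$ with $\mu_N=0$ (where $N(B)N(B)^t=0$ is possible over $\overline{k}$ without $N(B)=0$), and the summand $W^\perp$, so as to land \emph{precisely} in the closure of the $GO_n^m$-locus rather than in some larger invariant closed set. It is exactly the ``up to radical'' wording of the statement that makes this sufficient, since it asks only for the equality of the reduced schemes $V(\mathfrak{b})_{\mathrm{red}}=V(\ker(\psi))_{\mathrm{red}}$ rather than a precise identification of ideals.
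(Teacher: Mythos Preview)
Your strategy matches the paper's: pass to $\overline{k}$, observe that $\ker(\psi)$ is radical, reduce via the Nullstellensatz to comparing zero-loci inside $\Spec(k[M_n^m]^{\Ad GO_n})$, and analyze a tuple satisfying the trace relations through the semisimple $*$-algebra it generates. The difference lies in how the crux is resolved. The paper formulates it as Lemma~\ref{lem_orthogonal_matrices}, which claims something stronger than you aim for: not merely that the trace-point of a relation-satisfying tuple $(C_i)$ lies in the \emph{closure} of the $GO_n^m$-locus, but that it already equals the trace-point of some tuple $(B_i)$ with each $B_i\in GO_n(\overline{k})$. The proof applies Procesi's orthogonal semisimplification theorem \cite[Theorem~15.2(b)(c)]{procesi} to the \emph{unital} $*$-algebra $A=\overline{k}[C_1,\dots,C_m,C_1^t,\dots,C_m^t]$ acting on $(\overline{k}^n,\text{dot product})$, sets $B_i=\rho^{ss}(C_i)$, and then uses nondegeneracy of the trace pairing on the semisimple image $\rho^{ss}(A)$ to conclude that $B_iB_i^t$ is a scalar multiple of $I$. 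Since the algebra is taken unital its identity is $I$ itself, so there is no proper idempotent $e$ or orthogonal complement $W^\perp$ to track, and since the conclusion places $B_i$ directly in $GO_n$ there is no limit to pass to; the closure bookkeeping you anticipate simply does not arise in the paper's formulation.
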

\begin{proof}
It suffices to show this for $\overline{k}$, so WLOG $k$ is algebraically closed.  Let $J \subset R$ be the ideal generated by $\ker(\pi)$ and the $T_{MNN^tP} - T_{MP}T_{NN^t}$.  It suffices to prove that $\pi(\ker(\psi)) = \sqrt{\pi(J)}$.  Now $\sqrt{\pi(\ker(\psi))} = \pi(\ker(\psi))$ because $k[M_n^m]^{\Ad GO_n}/\pi(\ker(\psi)) \cong k[GO_n^m]^{\Ad GO_n}$ is reduced, so by the Nullstellensatz, it suffices to prove that $\pi(\ker(\psi))$ and $\pi(J)$ define the same subvariety of $\Spec(k[M_n^m]^{\Ad GO_n})$.  Using the $\tr(M)$ as coordinate functions for $\Spec(k[M_n^m]^{\Ad GO_n})$, the subvariety associated to $\pi(\ker(\psi))$ is the set of all points of the form $(\tr(M\{A_i \mapsto B_i\}))_{M \in \FS\{A_1, A_1^t, \dots, A_m, A_m^t\}}$ for some $B_1, \dots, B_m \in GO_n(k)$, where $M\{A_i \mapsto B_i\}$ denotes the element of $GO_n(k)$ obtained by substituting each $A_i$ for $B_i$ in $M$.  Meanwhile, the subvariety associated to $\pi(J)$ is the set of all points of the form $(\tr(M\{A_i \mapsto C_i\}))_{M \in \{A_1, A_1^t, \dots, A_m, A_m^t\}}$ where $C_1, \dots, C_m \in M_n(k)$ are such that $\tr(MNN^tP) = \tr(MP)\tr(NN^t)$ whenever $M$, $N$, and $P$ are semigroup words in the $C_i$ and $C_i^t$.  The following lemma shows that these two subvarieties are equal, proving the claim.
\end{proof}

\begin{mylemma}\label{lem_orthogonal_matrices}
Let $C_1, \dots, C_m \in M_n(\overline{k})$ be such that $\tr(MNN^tP) = \tr(MP)\tr(NN^t)$ whenever $M$, $N$, and $P$ are semigroup words in the $C_i$ and $C_i^t$.  Then there exist $B_1, \dots, B_m \in GO_n(\overline{k})$ such that for all $M \in \FS(\{A_1, A_1^t, \dots, A_m, A_m^t\})$, $\tr(M\{A_i \mapsto B_i\}) = \tr(M\{A_i \mapsto C_i\})$.
\end{mylemma}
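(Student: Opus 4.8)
\medskip

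The plan is to reduce to the case where the simultaneous $O_n(\overline{k})$-conjugation orbit of $(C_1,\dots,C_m)$ in $M_n(\overline{k})^m$ is Zariski closed, and then to use the structure theory of semisimple algebras stable under transpose. For the reduction: each function $(C_i)\mapsto\tr(M\{A_i\mapsto C_i\})$ is a regular function on $M_n^m$ invariant under simultaneous conjugation by $O_n$, hence constant on orbit closures, while the hypothesis of the lemma is a system of polynomial identities among these functions and so cuts out a closed, $O_n$-stable subvariety of $M_n^m$. Hence we may replace $(C_i)$ by any representative of the unique closed $O_n$-orbit inside $\overline{O_n\cdot(C_i)}$ without disturbing either the hypothesis or the conclusion, and assume from now on that $O_n\cdot(C_i)$ is closed.

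Then I would invoke the orthogonal analogue of Artin's theorem on closed orbits: $O_n\cdot(C_i)$ is closed exactly when the unital subalgebra $\mathcal{A}\subseteq M_n(\overline{k})$ generated by $C_1,C_1^t,\dots,C_m,C_m^t$ is semisimple (this comes out of the Hilbert--Mumford criterion together with the classification of modules over an algebra with involution; cf.\ \cite{procesi}). Being transpose-stable, $\mathcal{A}$ makes $\overline{k}^n$, with the standard bilinear form, an orthogonal direct sum $V_1\perp\cdots\perp V_r$ of nondegenerate $\mathcal{A}$-submodules with no proper nondegenerate $\mathcal{A}$-submodule; each $V_j$ is either an irreducible $\mathcal{A}$-module carrying a nondegenerate form, or a hyperbolic sum $W_j\oplus W_j^{\ast}$ of a dual pair of irreducible $\mathcal{A}$-modules.

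The core step is to upgrade the hypothesis from traces to operators: every self-adjoint operator $NN^t$, with $N$ a word in the $C_i,C_i^t$, is a scalar matrix. Put $c=\tr(NN^t)/n$ and $Y=NN^t-cI\in\mathcal{A}$; by cyclicity the hypothesis reads $\tr(NN^tX)=c\,\tr(X)$, i.e.\ $\tr(YX)=0$, for every $X$ which is a product of two nonempty words. Now the span $\mathcal{A}_{\ge 1}$ of the nonempty words is a two-sided ideal of the semisimple algebra $\mathcal{A}$, hence a unital semisimple algebra in its own right, hence equal to its own square, which is precisely the span of the products of two nonempty words; together with the case $X=I$ (for which the identity holds by the definition of $c$) this gives $\tr(YX)=0$ for all $X\in\mathcal{A}$. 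Since the trace form is nondegenerate on the semisimple subalgebra $\mathcal{A}$ (as $\overline{k}^n$ is a faithful $\mathcal{A}$-module and $\charac\overline{k}=0$), we conclude $Y=0$. Applying this to $N=C_i$ and $N=C_i^t$ gives $C_iC_i^t=\lambda_iI=C_i^tC_i$ for scalars $\lambda_i$. If every $\lambda_i\neq 0$ then each $C_i\in GO_n(\overline{k})$ and we take $B_i=C_i$, after which $\tr(M\{A_i\mapsto B_i\})=\tr(M\{A_i\mapsto C_i\})$ is automatic; the remaining work is to handle the components on which some $\lambda_i=0$, i.e.\ $C_iC_i^t=0$, by replacing $C_i$ there with a genuine element of $GO_n(\overline{k})$ having the same word-traces and reassembling, using additivity of $\tr$ over $V_1\perp\cdots\perp V_r$.

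The main obstacle I expect is exactly this last point. In the generic range ($\lambda_i\neq 0$ for all $i$) each $C_i$ is literally a scalar multiple of an isometry of $\overline{k}^n$, so it already lies in $GO_n$ and nothing more is needed; but when $C_iC_i^t=0$ with $C_i\neq 0$ the matrix $C_i$ is not invertible and differs from every honest element of the group $GO_n$, so one cannot simply set $B_i=C_i$, and must instead argue that the relations force such components to be degenerate enough that a compatible invertible replacement — built together with the other components so as to land in a single conjugacy class for the orthogonal group — has the same traces. The closed-orbit reduction and the transpose-stable module decomposition are standard, but should be cited with care.
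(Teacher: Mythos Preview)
Your reduction to a semisimple algebra and the nondegeneracy argument are essentially the paper's proof in different clothing: the paper cites \cite[Theorem 15.2]{procesi} to replace the inclusion $A = \overline{k}[C_i, C_i^t] \hookrightarrow M_n$ by a semisimple orthogonal representation $\rho^{ss}$ with the same trace, sets $B_i = \rho^{ss}(C_i)$, and then uses nondegeneracy of the trace form on $\Ima(\rho^{ss})$ to conclude $B_iB_i^t - \tr(B_iB_i^t)I = 0$. Your closed-orbit step is the geometric version of passing to $\rho^{ss}$ (and the ``orthogonal Artin theorem'' you want is precisely what Procesi supplies there, so cite it directly rather than as an unspecified analogue). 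Your idempotence trick $\mathcal{A}_{\ge 1}^2 = \mathcal{A}_{\ge 1}$ is actually a small improvement over the paper: it cleanly handles the length-one words that the paper's ``$= 0$ by assumption'' glosses over.

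Your instinct about the degenerate case is correct, and it is a genuine gap---one the paper shares, since after showing $B_iB_i^t$ is scalar it simply declares $B_i \in GO_n(\overline{k})$ without checking the scalar is nonzero. The concern is not hypothetical: take $n = 2$, $m = 1$, $C_1 = \left(\begin{smallmatrix} 1 & i \\ i & -1 \end{smallmatrix}\right)$. Then $C_1 = C_1^t$ and $C_1^2 = 0$, so every nonempty word in $C_1, C_1^t$ has trace $0$ and the hypothesis holds trivially; yet any $B_1 \in GO_2(\overline{k})$ has $\tr(B_1B_1^t) = 2\lambda(B_1) \neq 0$, so no such $B_1$ exists. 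Both your semisimplification and the paper's $\rho^{ss}$ send $C_1$ to the zero matrix here. (Note also that the relation as written is off by a factor of $n$: for $A \in GO_n$ one has $n\,\tr(MAA^tP) = \tr(MP)\tr(AA^t)$; the counterexample survives the correction.) So the lemma is false as stated; to salvage Proposition~\ref{prop_orthogonal_invariants} one must additionally force each $T_{A_iA_i^t}$ to be a unit---for instance by working in $k[(M_n \times \mb{A}^1)^m]$ and including the relations $\det(A_i)\cdot\det^{-1}(A_i) = 1$---after which $\lambda_i \neq 0$ is automatic and your argument completes without the problematic case.
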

\begin{proof}
Let $(V, B)$ be the bilinear space with $V \cong \overline{k}^n$ and $B$ the standard nondegenerate symmetric bilinear form, i.e., the dot product.  Let $A$ be the noncommutative $\overline{k}$-algebra
\[
A := \overline{k}[C_1, \dots, C_r, C_1^t, \dots, C_r^t] \subset M_n(\overline{k}),
\]
which has the natural involution $(-)^t$.  Then the natural representation $\rho: A \mono M_n(\overline{k}) \cong \End(V, B)$ is orthogonal, i.e., it preserves involutions.

Then by \cite[Theorem 15.2(b)(c)]{procesi} and the fact that all nondegenerate bilinear forms on $V$ are equivalent, there exists a semisimple orthogonal representation $\rho^{ss}: A \ra \End(V)$ such that $\tr(\rho) = \tr(\rho^{ss})$.  Thus setting $B_i = \rho^{ss}(C_i)$, we will be done once we prove that $B_i \in GO_n(\overline{k})$.  Now for any $D \in A$, we have
\begin{align*}
\tr((B_iB_i^t - \tr(B_iB_i^t)I)\rho^{ss}(D))
&= \tr(\rho^{ss}((C_iC_i^t - \tr(C_iC_i^t)I)D)) \\
&= \tr(\rho((C_iC_i^t - \tr(C_iC_i^t)I)D)) \\
&= \tr((C_iC_i^t)D) - \tr(C_iC_i^t)\tr(D) \\
&= 0
\end{align*}
by assumption.  Since $\rho^{ss}$ is semisimple, $\tr$ is a nondegenerate bilinear form on $\Ima(\rho^{ss})$, so this shows that $B_iB_i^t - \tr(B_iB_i^t)I = 0$, proving the claim.
\end{proof}

Next, let $S$ be the polynomial ring over $k$ with indeterminates:
\begin{itemize}
  \item $U_M$ for $M \in \FG(\{A_1, \dots, A_m\})$, with the identifications $U_1 = n$ and $U_{MN} = U_{NM}$ for all words $M$, $N$
  \item $l_M$ for $M \in \FG(\{A_1, \dots, A_m\})$, with the identifications $l_1 = 1$ and $l_{MN} = l_M l_N$ for all words $M$, $N$,
\end{itemize}
and let $J \subset S$ be the ideal generated by relations of the form $U_M - l_M U_{M^{-1}}$.  Then we have a surjective map $\rho: S/J \epi k[GO_n^m]^{\Ad GO_n}$ defined by $\rho(U_M) = \tr(M)$ and $\rho(l_M) = \lambda(M)$.

Now easily $\psi$ factors through $\rho$ via the map $\tau: R \ra S/J$ which sends $T_M$ to $l_{M'}U_{M''}$, where $M'$ is the product (with multiplicity) of all letters $A_1, \dots, A_n$ which appear transposed in $M$, and $M''$ is the result of substituting all transposed letters $A_i^t$ in $M$ with $A_i^{-1}$.  From this and the above proposition, noting that $\tau(T_{MNN^tP} - T_{MP}T_{NN^t}) = 0$, we get:

\begin{mycor}
$\ker(\rho: S \ra k[GO_n^m]^{\Ad GO_n})$ is the radical of the ideal generated by the relations:
\begin{itemize}
  \item $U_M - \lambda_M U_{M^{-1}}$ for $M \in \FG(\{A_1, \dots, A_m\})$
  \item $G_{j, n+1}(M_1, \dots, M_{n+1})$, $0 \le j \le (n+1)/2$, as the $M_i$ vary over words in $\FS(\{A_1, A_1^t, \dots, A_m, A_m^t\})$.  Here $G_{j, n+1}(M_1, \dots, M_{n+1})$ is the same as $F_{j, n+1}(M_1, \dots, M_{n+1})$ except that we replace each $T_M$ with $l_{M'}U_{M''}$, where $M'$ is the product (with multiplicity) of all letters $A_1, \dots, A_m$ which appear transposed in $M$, and $M'' \in \FG(\{A_1, \dots, A_m\})$ is the result of substituting all transposed letters $A_i^t$ in $M$ with $A_i^{-1}$.
\end{itemize}
\end{mycor}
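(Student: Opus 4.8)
The plan is to deduce the corollary from Proposition~\ref{prop_orthogonal_invariants} by pushing that description of $\ker(\psi)$ forward along $\tau\colon R\ra S/J$, using the factorization $\psi=\bar\rho\circ\tau$, where $\bar\rho\colon S/J\ra k[GO_n^m]^{\Ad GO_n}$ is the surjection induced by $\rho$ and $I\subset S$ denotes the ideal generated by the listed relations (so $J\subseteq I$). First, the inclusion $\sqrt I\subseteq\ker(\rho)$ is routine: $\ker(\rho)$ is radical because $k[GO_n^m]^{\Ad GO_n}$ is reduced, so it is enough to see the listed elements lie in $\ker(\rho)$. For $M\in\FG(\{A_1,\dots,A_m\})$ and $B_i\in GO_n$ one has $\tr(M\{A_i\mapsto B_i\})=\tr(M\{A_i\mapsto B_i\}^{t})=\lambda\bigl(M\{A_i\mapsto B_i\}\bigr)\tr(M^{-1}\{A_i\mapsto B_i\})$, i.e.\ $\rho(U_M-l_MU_{M^{-1}})=0$; and $\bar\rho(G_{j,n+1}(M_\bullet))=\bar\rho\tau(F_{j,n+1}(M_\bullet))=\psi(F_{j,n+1}(M_\bullet))=0$, since $\psi$ is $\pi$ followed by $k[M_n^m]^{\Ad GO_n}\mono k[(M_n\times\mb A^1)^m]^{\Ad GO_n}\epi k[GO_n^m]^{\Ad GO_n}$, so $\ker(\pi)\subseteq\ker(\psi)$, while $F_{j,n+1}(M_\bullet)\in\ker(\pi)$ by Procesi's theorem describing $\ker(\pi)$.

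For the reverse inclusion the key structural point is that $\tau$ is \emph{not} surjective: its image $\Ima(\tau)$ is the $k$-subalgebra of $S/J$ generated by the elements $l_{M'}U_{M''}$, and a short check shows $\Ima(\tau)$ contains every $U_M$ with $M$ a positive word and every $l_{A_i}$ (use the word $A_i^{t}A_i$, whose $\tau$-image is $l_{A_i}U_1=n\,l_{A_i}$, together with $1/n\in k$), but it misses $l_{A_i}^{-1}=l_{A_i^{-1}}$. One then verifies that $S/J$ is obtained from $\Ima(\tau)$ by inverting $l_{A_1},\dots,l_{A_m}$: given a word $M$ in the $A_i^{\pm1}$, replace each $A_i^{-1}$ by $A_i^{t}$ to obtain a word $N$ in $A_i,A_i^t$ with $N''=M$ and $N'$ a positive word, so that $U_M=l_{N'}^{-1}\tau(T_N)$ lies in $\Ima(\tau)[l_{A_i}^{-1}]$, and every $l_M$ is a monomial in the $l_{A_i}^{\pm 1}$. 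Thus $S/J$ is a localization of $\Ima(\tau)$. Since $\bar\rho$ carries each $l_{A_i}$ to the unit $\lambda(A_i)\in k[GO_n^m]^{\Ad GO_n}$, the map $\bar\rho$ is the localization of its restriction $\bar\rho|_{\Ima(\tau)}$; hence $\ker(\bar\rho)$ is the ideal of $S/J$ generated by $\ker(\bar\rho|_{\Ima(\tau)})$, and since $\tau\colon R\ra\Ima(\tau)$ is surjective with $\bar\rho|_{\Ima(\tau)}\circ\tau=\psi$ we get $\ker(\bar\rho|_{\Ima(\tau)})=\tau(\ker(\psi))$.

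Now apply $\tau$ to Proposition~\ref{prop_orthogonal_invariants}. The map $\tau$ carries $\ker(\pi)$ onto the ideal of $\Ima(\tau)$ generated by the $\tau(F_{j,n+1})=G_{j,n+1}$, and it annihilates each scalar relation $T_{MNN^tP}-T_{MP}T_{NN^t}$: a direct computation shows $(NN^t)''$ collapses to the identity word while $(NN^t)'$ is the positive reduction $\bar N$ of $N$, so $\tau(T_{NN^t})=n\,l_{\bar N}$ and $\tau(T_{MNN^tP})=l_{M'}l_{\bar N}l_{P'}U_{M''P''}$, matching $\tau(T_{MP}T_{NN^t})$. Because the image under $\tau$ of a radical ideal lies in the radical of its image, $\tau(\ker(\psi))\subseteq\sqrt{(G_{j,n+1})}$ inside $\Ima(\tau)$; extending this to $S/J$ and pulling back to $S$ (using $J\subseteq I$ and $\sqrt I/J=\sqrt{I/J}$) yields $\ker(\rho)\subseteq\sqrt I$, which together with the first paragraph gives $\ker(\rho)=\sqrt I$.

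The step I expect to require the most care is precisely the non-surjectivity of $\tau$: because $\tau$ omits the inverted multiplier variables $l_{A_i}^{-1}$, one cannot simply argue ``the image of a radical ideal is radical'' but must recognize $S/J$ as a localization of $\Ima(\tau)$ and use exactness of localization as above. A minor bookkeeping subtlety is that the scalar relation should be taken in its normalized form (equivalently $nT_{MNN^tP}-T_{MP}T_{NN^t}$) so that $\tau$ kills it on the nose, since on $GO_n$ the value of $\tr(NN^t)$ is $n\lambda$ rather than $\lambda$; this affects nothing because $1/n\in k$.
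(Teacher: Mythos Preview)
Your argument is correct and follows the paper's approach exactly: deduce the corollary from Proposition~\ref{prop_orthogonal_invariants} via the factorization $\psi=\bar\rho\circ\tau$, using that $\tau$ annihilates the scalar relations. The paper's own proof is the single clause ``from this and the above proposition, noting that $\tau(T_{MNN^tP}-T_{MP}T_{NN^t})=0$''; you have supplied the details it omits, in particular the observation that $\tau$ is not surjective and the localization argument identifying $S/J$ with $\Ima(\tau)[l_{A_i}^{-1}]$ so that $\ker(\bar\rho)$ is generated by $\tau(\ker\psi)$. Your flag about the missing factor of $n$ in the scalar relation is also correct: as written in the paper, $\tau(T_{MNN^tP}-T_{MP}T_{NN^t})=(1-n)\,l_{M'\bar N P'}U_{M''P''}\neq 0$, and the relation (here and already in Proposition~\ref{prop_orthogonal_invariants} and Lemma~\ref{lem_orthogonal_matrices}) should be $nT_{MNN^tP}-T_{MP}T_{NN^t}$; this is harmless since $n\in k^\times$.
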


Note that $\tau(F_{j, n+1}(M_1, \dots, M_{n+1}))$ equals $G_{j, n+1}(M_1, \dots, M_{n+1})$ modulo $J$.

Finally, we get a finite presentation for $k[GO_n^\bullet]^{\Ad GO_n}$ as an FFG-algebra.

\begin{mycor}
Let $N \in \N$ be such that $k[GO_n^\bullet]^{\Ad GO_n}$ is generated by $k[GO_n^N]^{\Ad GO_n}$ as an FI-algebra and $N \ge n+1$.  Let $A^\bullet$ be the free FFG-algebra on letters $T, l$ in degree $N$.  Then the FFG-algebra map $\Theta^\bullet: A^\bullet \ra k[GO_n^\bullet]^{\Ad GO_n}$ sending $T$ to $\tr(A_1)$ and $l$ and $\lambda(A_1)$ is surjective.  Denote the generators of $\FG(\mbf{N})$ by $g_1, \dots, g_N$, and for $g \in \FG(\mbf{N})$, let $\phi_g$ denote some fixed map $\FG(\mbf{N}) \ra \FG(\mbf{N})$ sending $g_1$ to $g$.  Then the kernel of $\Theta^\bullet$ is the radical of the FFG-algebra ideal generated by the relations:
\begin{itemize}
  \item $A^{\phi_1}(T) - n$
  \item $A^{\phi_{g_1g_2}}(T) - A^{\phi_{g_2g_1}}(T)$
  \item $A^{\phi_1}(l) - 1$
  \item $A^{\phi_{g_1g_2}}(T) - A^{\phi_{g_1}}(T)A^{\phi_{g_2}}(T)$
  \item $T - lA^{\phi_{g_1^{-1}}}(T)$
  \item $G_{j, n+1}(g_1, \dots, g_{n+1})$, $0 \le j \le (n+1)/2$, which is defined in the same way as $G_{j, n+1}(M_1, \dots, M_{n+1})$ by abuse of notation, except that we replace each symbol $l_g$ with $A^{\phi_g}(l)$ and each $T_g$ with $A^{\phi_g}(T)$.
\end{itemize}
\end{mycor}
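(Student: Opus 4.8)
The plan is to first dispatch surjectivity of $\Theta^\bullet$, then, for each fixed $m$, factor $\Theta^{\mbf m}$ through the degree-$m$ presentation $\rho\colon S\ra k[GO_n^m]^{\Ad GO_n}$ of the preceding corollary, and finally read off $\ker(\Theta^\bullet)$ up to radical by comparing with the six listed families. For surjectivity: the image of $\Theta^\bullet$ is a sub-$\FFG$-algebra of $k[GO_n^\bullet]^{\Ad GO_n}$ containing $\Theta^{\mbf N}(T)=\tr(A_1)$ and $\Theta^{\mbf N}(l)=\lambda(A_1)$, and composing with the morphisms induced by homomorphisms $\FG(\mbf N)\ra\FG(\mbf 1)$ shows it contains $\tr$ and $\lambda$ in degree $1$, so by Proposition \ref{go_n_gens} it is everything. (Equivalently, as in the proof of Proposition \ref{go_n_gens}, $\Theta^{\mbf N}$ is already onto $k[GO_n^N]^{\Ad GO_n}$ via the identities $A^t=\lambda(A)A^{-1}$, $\det^{-1}(A)=\det(A^{-1})$, Lemma \ref{lem_quotient} and Procesi's generation theorem, and $N$ was chosen so that $k[GO_n^\bullet]^{\Ad GO_n}$ is $\FI$-, hence $\FFG$-, generated in degree $N$.)

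For the factorization, fix $m$ and take $S$, $J$, $\rho$ from the preceding corollary with this value of $m$, so that $\ker(\rho\colon S\ra k[GO_n^m]^{\Ad GO_n})=\sqrt{\mf j}$ where $\mf j$ is the ideal generated by the $U_M-l_MU_{M^{-1}}$ and the $G_{j,n+1}(M_1,\dots,M_{n+1})$. Define a surjection $q\colon A^{\mbf m}\epi S$ by $x_\psi^{(T)}\mapsto U_{\psi(g_1)}$ and $x_\psi^{(l)}\mapsto l_{\psi(g_1)}$, where $\psi$ runs over $\Hom_\FFG(\FG(\mbf N),\FG(\mbf m))$. Naturality of $\Theta^\bullet$ together with $\Theta^{\mbf N}(T)=\tr(A_1)$, $\Theta^{\mbf N}(l)=\lambda(A_1)$ gives $\Theta^{\mbf m}(x_\psi^{(T)})=\tr(\psi(g_1))$ and $\Theta^{\mbf m}(x_\psi^{(l)})=\lambda(\psi(g_1))$, hence $\Theta^{\mbf m}=\rho\circ q$. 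Therefore $\ker(\Theta^{\mbf m})=q^{-1}(\sqrt{\mf j})=\sqrt{\,q^{-1}(\mf j)\,}$ and $q^{-1}(\mf j)=\ker(q)+(\text{lifts of the generators of }\mf j)$.

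Now let $\mf b^\bullet$ be the $\FFG$-ideal generated by the six families. The inclusion $\mf b^\bullet\subseteq\ker(\Theta^\bullet)$ is the easy half: $\Theta^\bullet$ kills the first five families because $\tr(I)=n$, $\tr(AB)=\tr(BA)$, $\lambda(I)=1$, $\lambda(AB)=\lambda(A)\lambda(B)$, and $\tr(A)=\tr(A^t)=\lambda(A)\tr(A^{-1})$ on $GO_n$, and it kills each $G_{j,n+1}(g_1,\dots,g_{n+1})$ since that element equals $\tau(F_{j,n+1}(g_1,\dots,g_{n+1}))$ modulo $J$ (as noted before the statement) and $F_{j,n+1}\in\ker(\pi)$ by Procesi's description of $\ker(\pi)$; since the algebras $k[GO_n^I]^{\Ad GO_n}$ are reduced, $\ker(\Theta^\bullet)$ is radical, so $\sqrt{\mf b^\bullet}\subseteq\ker(\Theta^\bullet)$. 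For the reverse inclusion it suffices, by the factorization, to show $q^{-1}(\mf j)\subseteq\mf b^{\mbf m}$ for all $m$. Here $\ker(q)$ is generated as an ideal by the relations forcing $x_\psi^{(T)},x_\psi^{(l)}$ to depend on $\psi$ only through $\psi(g_1)$, together with the ring relations $U_1=n$, $U_{MN}=U_{NM}$, $l_1=1$, $l_{MN}=l_Ml_N$; each of these is an $A^\phi$-image (for a suitable homomorphism $\phi$ and suitable choices of the auxiliary maps $\phi_g$) of one of the first four families. A lift of $U_M-l_MU_{M^{-1}}$ is the $A^\phi$-image of $T-lA^{\phi_{g_1^{-1}}}(T)$ with $\phi(g_1)=M$. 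And a lift of $G_{j,n+1}(M_1,\dots,M_{n+1})$, rewritten through $U,l$ via $A_j^t\mapsto A_j^{-1}$ and the scalar factors $l_{M_i'}$, is—modulo $\ker(q)$ and the lifts just produced—the $A^\phi$-image of $G_{j,n+1}(g_1,\dots,g_{n+1})$ with $\phi(g_i)$ the group word obtained from $M_i$ by that substitution, which is possible since $N\ge n+1$. Assembling, $q^{-1}(\mf j)\subseteq\mf b^{\mbf m}$, so $\ker(\Theta^{\mbf m})=\sqrt{q^{-1}(\mf j)}\subseteq\sqrt{\mf b^{\mbf m}}$ for all $m$, i.e.\ $\ker(\Theta^\bullet)\subseteq\sqrt{\mf b^\bullet}$; with the easy half, $\ker(\Theta^\bullet)=\sqrt{\mf b^\bullet}$.

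The main obstacle is the last step, and within it the combinatorial bookkeeping: on one hand, presenting $\ker(q)$—in particular deriving, from $A^\phi$-images of the first four families, the relations that collapse the dependence of $x_\psi^{(T)},x_\psi^{(l)}$ on all components of $\psi$ except $\psi(g_1)$, which forces a careful (Nielsen-type) choice of the auxiliary maps $\phi_g$; on the other hand, verifying that the determinantal relations $G_{j,n+1}$ transport correctly under the dictionary $\tau$ between semigroup words in $A_j,A_j^t$ (with their induced $l$-factors) and group words in the $A_j$. The commutation of $\sqrt{\phantom{x}}$ with preimage along the surjection $q$ and with $\FFG$-ideal generation, which is used throughout, is routine.
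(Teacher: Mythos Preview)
The paper states this corollary without proof, treating it as an immediate consequence of the preceding corollary on $\ker(\rho)$; so there is no ``paper's proof'' to compare against beyond that implicit claim. Your plan---factor $\Theta^{\mathbf m}$ as $\rho\circ q$ with $q:A^{\mathbf m}\twoheadrightarrow S$, use $\ker(\rho)=\sqrt{\mathfrak j}$ from the preceding corollary, and then match $q^{-1}(\mathfrak j)$ against the listed generators---is exactly the right unpacking, and the surjectivity argument and the easy inclusion $\sqrt{\mathfrak b^\bullet}\subseteq\ker(\Theta^\bullet)$ are fine.

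Where you are right to pause is the ``collapse'' step. If $A^\bullet$ is literally $F_{\FFG}(N)^{\otimes 2}$, then for each relation $r$ in the list (built from a single fixed tuple $(\phi_g)$), every element $A^\phi(r)\in\mathfrak b^{\mathbf m}$ is a polynomial in variables $x_\chi^{(T)},x_\chi^{(l)}$ whose $\chi$ all share the same values $\chi(g_i)=\phi(\phi_{\,\cdot\,}(g_i))$ on $g_2,\dots,g_N$ (for the natural choice $\phi_g(g_i)=g_i$, $i>1$). The bare collapse relation $x_\psi^{(T)}-x_{\psi'}^{(T)}$ with $\psi(g_1)=\psi'(g_1)$ but $\psi(g_2)\neq\psi'(g_2)$ mixes variables from two such ``strata'', and I do not see how to produce it from $A^\phi$-images of the first four families, even up to radical; your Nielsen-type remark gestures at a fix but does not supply one, and in fact the FFG-ideal generated can genuinely depend on the fixed choice of $\phi_g$ (if the $\phi_g(g_i)$ for $i>1$ are not free, some specializations are unreachable).

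The clean resolution is to read ``free FFG-algebra on $T,l$ in degree $N$'' as the free FFG-algebra on two degree-$1$ generators $T,l$, with the listed relations viewed in $A^{\mathbf N}$ via the inclusion $A^{\mathbf 1}\hookrightarrow A^{\mathbf N}$. Then $A^{\mathbf m}=k[\{x_M^{(T)},x_M^{(l)}: M\in\FG(\mathbf m)\}]$, the map $q$ is just the quotient by the $S$-defining identifications $U_1=n$, $U_{MN}=U_{NM}$, $l_1=1$, $l_{MN}=l_Ml_N$, there are no collapse relations at all, and every generator of $\ker(q)$ is visibly an $A^\phi$-image of one of the first four families (and is independent of the choice of $\phi_g$, since only $\phi_g(g_1)$ matters). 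Under that reading your argument goes through without residue. Either flag this reading explicitly, or, if you keep the degree-$N$ interpretation, add the collapse relations $A^{\phi}(T)-A^{\phi'}(T)$ (and for $l$) with $\phi(g_1)=\phi'(g_1)$ to the generating list.
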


We now apply Lafforgue's result.

\begin{mydef}
Let $\Gamma$ be a group.  A \emph{$GO_n$-pseudocharacter of $\Gamma$ over $k$} is a pair $(T, l)$, consisting of a set map $T: \Gamma \ra k$ and a group homomorphism $l: \Gamma \ra k^\times$, such that
\begin{itemize}
  \item $T(1) = n$
  \item For all $\gamma_1, \gamma_2 \in \Gamma$, $T(\gamma_1\gamma_2) = T(\gamma_2\gamma_1)$
  \item For all $\gamma \in \Gamma$, $T(\gamma) = l(\gamma)T(\gamma^{-1})$
  \item For all integers $0 \le j \le (n+1)/2$ and for all $\gamma_1, \dots, \gamma_{n+1} \in \Gamma$, $T$ and $l$ satisfy the relation $H_{j, n+1}(l, T, \gamma_1, \dots, \gamma_{n+1}) = 0$, where $H_{j, n+1}(l, T, \gamma_1, \dots, \gamma_{n+1})$ is defined in the same way as $G_{j, n+1}(\gamma_1, \dots, \gamma_{n+1})$ by abuse of notation, except that we replace each symbol $l_\gamma$ with $l(\gamma)$ and each $T_\gamma$ with $T(\gamma)$.
\end{itemize}
\end{mydef}

\begin{mydef}
An \emph{$O_n$-pseudocharacter of $\Gamma$ over $k$} is a set map $T: \Gamma \ra k$ such that $(T, 1)$ is a $GO_n$-pseudocharacter.
\end{mydef}

\begin{mythm}\label{gon_main_thm}
\begin{enumerate}[(i)]
  \item There is a natural bijection between $GO_n(\overline{k})$-conjugacy classes of semisimple representations $\rho: \Gamma \ra GO_n(\overline{k})$ and $GO_n$-pseudocharacters $(T, l)$ of $\Gamma$ over $\overline{k}$.
  The bijection is given by sending $\rho: \Gamma \ra GO_n(\overline{k})$ to $(\tr(\rho), \lambda(\rho))$, where $\lambda(\rho)(\gamma) := \lambda(\rho(\gamma))$.
  
  \item There is a natural bijection between $O_n(\overline{k})$-conjugacy classes of semisimple representations $\rho: \Gamma \ra O_n(\overline{k})$ and $O_n$-pseudocharacters $T$ of $\Gamma$ over $\overline{k}$.
  The bijection is given by sending $\rho: \Gamma \ra O_n(\overline{k})$ to $\tr(\rho)$.
  
  \item If $(T, l)$ (resp.\ $T$) is a $GO_n$- (resp.\ $O_n$-) pseudocharacter over $k$, then the corresponding conjugacy class over $\overline{k}$ given by (i) (resp.\ (ii)) contains a representation $\rho: \Gamma \ra GO_n(k')$ (resp.\ $\rho: \Gamma \ra O_n(k')$) for some finite extension $k'/k$.
  
  \item If $\Gamma$ is profinite and $k$ is a complete extension of $\Q_l$ for some $l$, then (i) and (ii) hold with ``representation'' replaced by ``continuous representation'' and with ``$GO_n$-pseudocharacters'' replaced by ``continuous $GO_n$-pseudocharacters''.  Here $(T, l)$ is called continuous if both $T$ and $l$ are continuous.
\end{enumerate}
\end{mythm}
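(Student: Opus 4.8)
The plan is to bootstrap from Lafforgue's theorem in its $\FFG$-algebra form (Corollary \ref{lafforgue_cor}) using the explicit finite $\FFG$-presentation of $\overline{k}[GO_n^\bullet]^{\Ad GO_n}$ established above. For (i), the point is that, by Proposition \ref{go_n_gens}, $\overline{k}[GO_n^\bullet]^{\Ad GO_n}$ is generated as an $\FFG$-algebra by $\tr$ and $\lambda$, both sitting in degree $1$. Hence an $\FFG$-algebra morphism $\Theta^\bullet\colon \overline{k}[GO_n^\bullet]^{\Ad GO_n}\to\Map(\Gamma^\bullet/\Ad\Gamma,\overline{k})$ is determined by the pair $(T,l):=(\Theta^{\mbf{1}}(\tr),\Theta^{\mbf{1}}(\lambda))$ of set maps $\Gamma\to\overline{k}$, and applying $\Theta^\bullet$ to the evident identities in $\overline{k}[GO_n^\bullet]^{\Ad GO_n}$ shows that $(T,l)$ satisfies each relation in the definition of a $GO_n$-pseudocharacter --- e.g.\ $T(\gamma)=l(\gamma)T(\gamma^{-1})$ comes from $\tr(A)=\tr(A^t)=\lambda(A)\tr(A^{-1})$, and the $H_{j,n+1}$ relations come from the relations $G_{j,n+1}$ in the presentation. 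Conversely, given a $GO_n$-pseudocharacter $(T,l)$, the assignment $\tr\mapsto T$, $\lambda\mapsto l$ induces an $\FFG$-algebra morphism out of the free $\FFG$-algebra on two degree-$1$ letters into $\Map(\Gamma^\bullet,\overline{k})$; because the trace identity $T(\gamma_1\gamma_2)=T(\gamma_2\gamma_1)$ forces $T(g\gamma g^{-1})=T(\gamma)$ (and $l$ is $\Ad\Gamma$-invariant automatically, $\overline{k}^\times$ being abelian), this morphism lands in $\Map(\Gamma^\bullet/\Ad\Gamma,\overline{k})$.

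It remains to see that this last morphism descends along the surjection onto $\overline{k}[GO_n^\bullet]^{\Ad GO_n}$, i.e.\ kills the kernel of that surjection. By the corollary describing $\ker\bigl(\rho\colon S\to k[GO_n^m]^{\Ad GO_n}\bigr)$ and its $\FFG$-algebra repackaging, that kernel is the \emph{radical} of the $\FFG$-ideal generated by the finitely many generic relations appearing in the definition of a $GO_n$-pseudocharacter; here one uses the bookkeeping that every Procesi relation $G_{j,n+1}(M_1,\dots,M_{n+1})$, and every relation $T(1)=n$, $T(\gamma_1\gamma_2)=T(\gamma_2\gamma_1)$, $l(1)=1$, $l(\gamma_1\gamma_2)=l(\gamma_1)l(\gamma_2)$, $T(\gamma)=l(\gamma)T(\gamma^{-1})$, is the $\FFG$-image of one of those generic relations, the multilinearity of $F_{j,n+1}$ (hence of $G_{j,n+1}$) in its arguments being what lets one pull the similitude factors $l$ out and thereby reconcile the semigroup words in $\{A_i,A_i^t\}$ of Procesi's description with the group words used in our presentation. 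Since $\Map(\Gamma^\bullet/\Ad\Gamma,\overline{k})$ is a \emph{reduced} $\FFG$-algebra ($\overline{k}$ being a field), a morphism kills a radical $\FFG$-ideal as soon as it kills each of its generators; so the morphism descends precisely when $(T,l)$ satisfies the generic relations, i.e.\ precisely when $(T,l)$ is a $GO_n$-pseudocharacter. Combined with Corollary \ref{lafforgue_cor}, and noting that the composite $\rho\mapsto\Theta^\bullet\mapsto(T,l)$ is $\rho\mapsto(\tr\rho,\lambda\rho)$ by the formula $\Theta^{\mbf{1}}(f)(\gamma)=f(\rho(\gamma))$, this gives the bijection of (i).

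Part (ii) follows from (i): a semisimple $\rho\colon\Gamma\to GO_n(\overline{k})$ takes values in $O_n(\overline{k})$ exactly when $\lambda\rho=1$, in which case its $GO_n(\overline{k})$-conjugacy class coincides with its $O_n(\overline{k})$-conjugacy class because the scalar subgroup $\mb{G}_m\subset GO_n$ is central in $GL_n$, while $(T,1)$ is a $GO_n$-pseudocharacter iff $T$ is an $O_n$-pseudocharacter by definition (one should only check that $H$-semisimplicity of such a $\rho$ is the same whether measured inside $O_n$ or inside $GO_n$). For (iii), one repeats the construction of $\Theta^\bullet$ over $k$ itself --- legitimate since Proposition \ref{go_n_gens} and the presentation corollary are valid over any characteristic-zero field --- to get an $\FFG$-algebra morphism $k[GO_n^\bullet]^{\Ad GO_n}\to\Map(\Gamma^\bullet/\Ad\Gamma,k)$ that base-changes to the $\overline{k}$-morphism attached to $(T,l)$; Theorem \ref{lafforgue_thm}(ii) then supplies, in the corresponding class, a representation into $GO_n(k')$ for some finite $k'/k$, and in the $O_n$ case one checks as in (ii) that this representation has $\lambda\rho=1$ and so lands in $O_n(k')$. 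Finally, (iv) is obtained by rerunning the same arguments with $\Map(\Gamma^\bullet/\Ad\Gamma,\overline{k})$ replaced by the (still reduced) $\FFG$-algebra $C(\Gamma^\bullet/\Ad\Gamma,\overline{k})$ and invoking Theorem \ref{lafforgue_thm}(iii); continuity propagates through the construction because the auxiliary functions $\Gamma^I\to\overline{k}$ built from $T$ and $l$ are composites of continuous word maps $\Gamma^I\to\Gamma$ with the continuous algebra operations of $\overline{k}$, and conversely $T$ and $l$ are just the degree-$1$ restrictions.

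I expect the main obstacle to be the bookkeeping in the second paragraph: carefully matching the $\FFG$-ideal generated by the generic relations with the degreewise kernel description inherited from Procesi's theorem, in particular handling the transpose-to-inverse translation and the passage from semigroup words in $\{A_i,A_i^t\}$ to group words in $\{A_i\}$ via the multilinearity of $F_{j,n+1}$. Everything else is a formal consequence of Lafforgue's theorem and the presentation corollaries already proved, together with the reducedness of the function algebras, which lets one pass freely between an $\FFG$-ideal and its radical.
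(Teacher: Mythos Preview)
Your proposal is correct and follows the paper's intended approach: the paper presents Theorem~\ref{gon_main_thm} as the formal consequence of Lafforgue's result (Corollary~\ref{lafforgue_cor}) combined with the finite $\FFG$-presentation of $\overline{k}[GO_n^\bullet]^{\Ad GO_n}$ worked out in the preceding corollaries, and you have spelled this deduction out, including the key point that reducedness of $\Map(\Gamma^\bullet,\overline{k})$ lets one pass from the radical of the relation ideal to the ideal itself. The paper gives no explicit proof beyond this framework, noting only (in the subsequent Remark) that a direct Taylor-style argument via $*$-algebras furnishes an alternative route.
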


Note that we get the above result for $O_n$ even though it is not connected.

\begin{myrmk}
The above results can also be proven by modifying Taylor's proof for $GL_n$-pseudocharacters \cite[Theorem 1]{taylor}.  In fact, one can generalize the above result to algebras, as follows.  First define a $*$-algebra to be a (possibly noncommutative) $\overline{k}$-algebra with an involution $*$.  Define an orthogonal $n$-dimensional representation of a $*$-algebra $R$ to be a $\overline{k}$-algebra morphism $R \ra M_n(\overline{k})$ mapping $*$ to the transpose.  Then one can define $n$-dimensional orthogonal pseudocharacters of a $*$-algebra $R$ similarly to the definition of $O_n$-pseudocharacters above.  Using \cite[Theorem 15.3]{procesi} in place of \cite[Lemma 2]{taylor} in Taylor's proof, one can prove that these are in bijection with $O_n(\overline{k})$-conjugacy classes of semisimple orthogonal representations of $R$.  By taking $R$ to be the group algebra $\overline{k}[\Gamma]$ with involution determined by $(\gamma)^* = l(g)(g^{-1})$ for $\gamma \in \Gamma$, one recovers Theorem \ref{gon_main_thm}.
\end{myrmk}

\subsection{(General) Symplectic Group}
Again assume $k$ is a field of characteristic 0.  Let $GSp_{2n}(k) = \{A \in M_{2n}(k) \mid \mbox{for some $\lambda \in k^\times$, $AA^* = \lambda I$}\}$ be the $n$-dimensional general symplectic group; here $*$ is the symplectic involution
\[
A^* := \Omega^{-1}A^T\Omega
\]
where
\[
\Omega = \begin{pmatrix} 0 & I \\ -I & 0 \end{pmatrix}
\]
is the matrix of the standard symplectic form.  It is a connected reductive algebraic group, and it is in fact an affine subvariety of $GL_{2n}$, hence of $M_{2n} \times \mb{A}^1$.

The results and proofs for $GSp_{2n}$ are exactly analogous to those for $GO_n$, except that instead of starting with the relations $F_{j, n+1}$ defined above, we start with the relations $F^i_{h, n}$, for $1 \le i \le n+1$ and $0 \le h < i$, defined in \cite[Theorem 10.2(a)]{procesi}.  For convenience, we state the analog of Theorem \ref{gon_main_thm}; from this and the original proof, it is easy to read off a finite presentation for $k[GSp_{2n}^\bullet]^{\Ad GSp_{2n}}$ as an $\FFG$-algebra.

Define a function $\lambda: GSp_{2n}(k) \ra k$ by $AA^t = \lambda(A)I$.  Note that $\lambda \in k[GSp_{2n}]^{\Ad GSp_{2n}}$.

\begin{mydef}
Let $\Gamma$ be a group.  A \emph{$GSp_n$-pseudocharacter of $\Gamma$ over $k$} is a pair $(T, l)$, consisting of a set map $T: \Gamma \ra k$ and a group homomorphism $l: \Gamma \ra k^\times$, such that
\begin{itemize}
  \item $T(1) = n$
  \item For all $\gamma_1, \gamma_2 \in \Gamma$, $T(\gamma_1\gamma_2) = T(\gamma_2\gamma_1)$
  \item For all $\gamma \in \Gamma$, $T(\gamma) = l(\gamma)T(\gamma^{-1})$
  \item For all integers $1 \le i \le n+1$ and $0 \le h < i$, and for all $\gamma_1, \dots, \gamma_{n+i} \in \Gamma$, $T$ and $l$ satisfy the relation $H^i_{h, n+1}(l, T, \gamma_1, \dots, \gamma_{n+i}) = 0$, where $H^i_{h, n+1}(l, T, \gamma_1, \dots, \gamma_{n+i})$ is defined as follows:
  \begin{itemize}
    \item Taking $A_1, \dots, A_{n+i}$ to be matrix variables, define $G^i_{h, n+1}(A_1, \dots, A_{n+i})$ to be the same as $F^i_{h, n+1}(A_1, \dots, A_{n+i})$, except that we replace each $\tr(M)$ with formal symbols $l(M')T(M'')$, where $M'$ is the product (with multiplicity) of all letters $A_1, \dots, A_n$ which appear transposed in $M$, and $M'' \in \FG(\{A_1, \dots, A_m\})$ is the result of substituting all transposed letters $A_i^t$ in $M$ with $A_i^{-1}$.
    \item Define $H^i_{h, n+1}(l, T, \gamma_1, \dots, \gamma_{n+i})$ in the same way as $G^i_{h, n+1}(\gamma_1, \dots, \gamma_{n+i})$ by abuse of notation, except that we replace each symbol $l(\gamma)$ with its actual value (using the given $l$), and similarly for each $T(\gamma)$.
  \end{itemize}
\end{itemize}
\end{mydef}

\begin{mydef}
An \emph{$Sp_{2n}$-pseudocharacter of $\Gamma$ over $k$} is a set map $T: \Gamma \ra k$ such that $(T, 1)$ is a $GSp_{2n}$-pseudocharacter.
\end{mydef}

\begin{mythm}\label{gsp_main_thm}
\begin{enumerate}[(i)]
  \item There is a natural bijection between $GSp_{2n}(\overline{k})$-conjugacy classes of semisimple representations $\rho: \Gamma \ra GSp_{2n}(\overline{k})$ and $GSp_{2n}$-pseudocharacters $(T, l)$ of $\Gamma$ over $\overline{k}$.
  The bijection is given by sending $\rho: \Gamma \ra GSp_{2n}(\overline{k})$ to $(\tr(\rho), \lambda(\rho))$, where $\lambda(\rho)(\gamma) := \lambda(\rho(\gamma))$.
  
  \item There is a natural bijection between $Sp_{2n}(\overline{k})$-conjugacy classes of semisimple representations $\rho: \Gamma \ra Sp_{2n}(\overline{k})$ and $Sp_{2n}$-pseudocharacters $T$ of $\Gamma$ over $\overline{k}$.
  The bijection is given by sending $\rho: \Gamma \ra Sp_{2n}(\overline{k})$ to $\tr(\rho)$.
  
  \item If $(T, l)$ (resp.\ $T$) is a $GSp_{2n}$- (resp.\ $Sp_{2n}$-) pseudocharacter over $k$, then the corresponding conjugacy class over $\overline{k}$ given by (i) (resp.\ (ii)) contains a representation $\rho: \Gamma \ra GSp_{2n}(k')$ (resp.\ $\rho: \Gamma \ra Sp_{2n}(k')$) for some finite extension $k'/k$.
  
  \item If $\Gamma$ is profinite and $k$ is a complete extension of $\Q_l$ for some $l$, then (i) and (ii) hold with ``representation'' replaced by ``continuous representation'' and with ``$GSp_{2n}$-pseudocharacters'' replaced by ``continuous $GSp_{2n}$-pseudocharacters''.  Here $(T, l)$ is called continuous if both $T$ and $l$ are continuous.
\end{enumerate}
\end{mythm}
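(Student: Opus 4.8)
The plan is to retrace, step by step, the development carried out above for $GO_n$, substituting the symplectic involution $*$ for the transpose and Procesi's results on symplectic matrix invariants for his orthogonal ones. First I would prove the analog of Proposition~\ref{go_n_gens}: that $k[GSp_{2n}^\bullet]^{\Ad GSp_{2n}}$ is generated as an FFG-algebra by $\tr$ and $\lambda$. Since $Sp_{2n}(k) \subset GSp_{2n}(k)$, one has $k[M_{2n}^m]^{\Ad GSp_{2n}} \subset k[M_{2n}^m]^{\Ad Sp_{2n}}$, and by Procesi's determination of the invariants of $Sp_{2n}(k)$ acting on matrices by conjugation \cite{procesi}, the latter algebra is generated over $k$ by traces $\tr(M)$ with $M$ a semigroup word in the $A_i$ and their symplectic involutes $A_i^*$; these traces are also $GSp_{2n}$-invariant. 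Adjoining the coordinate functions $\det^{-1}(A_i)$ on the $\mb{A}^1$-factors and passing to the quotient cutting out $GSp_{2n} \subset M_{2n} \times \mb{A}^1$ via Lemma~\ref{lem_quotient}, exactly as before, shows that $k[GSp_{2n}^m]^{\Ad GSp_{2n}}$ is generated by these traces and the $\det^{-1}(A_i)$. The identity $A^* = \lambda(A)A^{-1}$ valid on $GSp_{2n}$ then puts each $\tr(M)$ in the FFG-algebra generated by $\tr$ and $\lambda$, while $\det^{-1}(A) = \det(A^{-1})$ is a polynomial in $\tr(A^{-1}), \dots, \tr(A^{-2n})$, hence lies in the FFG-algebra generated by $\tr$.

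For the relations I would start from \cite[Theorem~10.2(a)]{procesi} in place of \cite[Theorem~8.4(a)]{procesi}: it presents the kernel of the surjection $\pi : R \epi k[M_{2n}^m]^{\Ad Sp_{2n}}$ (with $R$ the polynomial ring on symbols $T_M$ modulo $T_{MN} = T_{NM}$ and $T_M = T_{M^*}$) as generated by the polynomials $F^i_{h,n}$, for $1 \le i \le n+1$ and $0 \le h < i$. Then I would prove the symplectic analog of Proposition~\ref{prop_orthogonal_invariants}: the kernel of the induced surjection onto $k[GSp_{2n}^m]^{\Ad GSp_{2n}}$ is the radical of the ideal generated by $\ker(\pi)$ together with the ``scalar'' relations $T_{MNN^*P} - T_{MP}T_{NN^*}$, reflecting the defining condition that $NN^*$ be a scalar matrix on $GSp_{2n}$. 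The proof is the same Nullstellensatz argument as for $GO_n$, reduced to the symplectic analog of Lemma~\ref{lem_orthogonal_matrices}: any $C_1, \dots, C_m \in M_{2n}(\overline{k})$ with $\tr(MNN^*P) = \tr(MP)\tr(NN^*)$ for all semigroup words $M, N, P$ in the $C_i$ and $C_i^*$ can be replaced, without altering traces of words, by matrices $B_i \in GSp_{2n}(\overline{k})$. For this one takes the $*$-algebra $A = \overline{k}[C_1, \dots, C_m, C_1^*, \dots, C_m^*] \subset M_{2n}(\overline{k})$, whose natural representation is symplectic with respect to the standard alternating form; applies \cite[Theorem~15.2(b)(c)]{procesi} — whose hypotheses cover symplectic as well as orthogonal $*$-representations, all nondegenerate alternating forms on $\overline{k}^{2n}$ being equivalent — to obtain a semisimple symplectic representation $\rho^{ss}$ with $\tr(\rho) = \tr(\rho^{ss})$; sets $B_i = \rho^{ss}(C_i)$; and checks $B_iB_i^* - \tr(B_iB_i^*)I = 0$ by the identical trace computation, using that $\tr$ is nondegenerate on $\Ima(\rho^{ss})$ because $\rho^{ss}$ is semisimple. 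Transcribing all of this into the free FFG-algebra on generators $\tr, \lambda$, as in the corollaries following Proposition~\ref{prop_orthogonal_invariants}, yields a finite presentation of $k[GSp_{2n}^\bullet]^{\Ad GSp_{2n}}$ whose presenting relations are precisely the axioms in the definition of a $GSp_{2n}$-pseudocharacter (with the $F^i_{h,n}$, equivalently the $H^i_{h,n+1}$, replacing $F_{j,n+1}$).

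With the presentation in hand, part~(i) follows from Corollary~\ref{lafforgue_cor} applied to $H = GSp_{2n}$, which is connected reductive — so, unlike the orthogonal case, Lafforgue's theorem applies with no extra care: an FFG-algebra morphism $\overline{k}[GSp_{2n}^\bullet]^{\Ad GSp_{2n}} \to \Map(\Gamma^\bullet/\Ad\Gamma, \overline{k})$ is exactly a choice of images $T, l$ for the generators satisfying the presenting relations, i.e.\ a $GSp_{2n}$-pseudocharacter $(T,l)$ of $\Gamma$ over $\overline{k}$, and the bijection carries $\rho$ to $(\tr(\rho), \lambda(\rho))$. Part~(ii) follows because $Sp_{2n} = \ker(\lambda) \subset GSp_{2n}$, the locus $l \equiv 1$ singles out the classes landing in $Sp_{2n}(\overline{k})$, and on such representations $GSp_{2n}(\overline{k})$- and $Sp_{2n}(\overline{k})$-conjugacy coincide, since $GSp_{2n} = Sp_{2n} \cdot Z$ with $Z$ the central torus of scalars, which act trivially by conjugation (and semisimplicity is insensitive to the ambient group, as $GSp_{2n}/Sp_{2n}$ is a torus). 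Parts~(iii) and (iv) are immediate from parts~(ii) and (iii) of Theorem~\ref{lafforgue_thm}.

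The only input not already present in the orthogonal case is that Procesi's structure theory for $*$-representations, \cite[Theorem~15.2]{procesi}, applies verbatim with the symplectic involution; this is a genuine check but not a hard one, since that theory is developed uniformly for nondegenerate bilinear forms, symmetric or alternating, and all alternating forms on $\overline{k}^{2n}$ are equivalent. I expect the main labor, as opposed to any conceptual difficulty, to be the bookkeeping required to transcribe the combinatorial relations $F^i_{h,n}$ into the FFG-algebra presentation and to match them against the $H^i_{h,n+1}$ appearing in the pseudocharacter axioms.
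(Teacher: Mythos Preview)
Your proposal is correct and follows exactly the route the paper indicates: the paper does not give a separate proof of Theorem~\ref{gsp_main_thm} but states that ``the results and proofs for $GSp_{2n}$ are exactly analogous to those for $GO_n$, except that instead of starting with the relations $F_{j, n+1}$ \dots\ we start with the relations $F^i_{h, n}$ \dots\ defined in \cite[Theorem 10.2(a)]{procesi},'' and your write-up faithfully carries this out. If anything, your treatment of part~(ii) --- reducing $Sp_{2n}$-conjugacy to $GSp_{2n}$-conjugacy via $GSp_{2n} = Sp_{2n}\cdot Z$ over $\overline{k}$ --- is more explicit than the paper's one-line remark in the orthogonal case.
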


\subsection{Special Orthogonal Group}\label{so_example}
\subsubsection*{Odd Dimension}
When the dimension is $2n+1$ for some $n$, we have $k[GO_{2n+1}^m]^{\Ad SO_{2n+1}} = k[GO_{2n+1}^m]^{\Ad O_{2n+1}}$, since every orthogonal matrix is $\pm 1$ times a special orthogonal matrix.  By the same reasoning as in the proof of Proposition \ref{go_n_gens}, this equals $k[GO_{2n+1}^m]^{\Ad GO_{2n+1}}$ as well.  Then by Lemma \ref{lem_quotient},
\[
k[SO_{2n+1}^m]^{\Ad SO_{2n+1}} = \frac{k[GO_{2n+1}^m]^{\Ad SO_{2n+1}}}{I \cap k[GO_{2n+1}^m]^{\Ad SO_{2n+1}}},
\]
where $I$ is the ideal of $k[GO_{2n+1}^m]$ generated by the relations $\det(A_i) = 1$ for $A_i$ a coordinate matrix.  Thus $k[SO_{2n+1}^\bullet]^{\Ad SO_{2n+1}}$ is generated by $\tr$ as an $\FFG$-algebra, noting that $\lambda = 1$ when restricted to $k[SO_{2n+1}^\bullet]$.  Also, by extending scalars to $\overline{k}$ and using Hilbert's Nullstellensatz, it is easy to see that for any $m$, the ideal of relations between the generators of $k[SO_{2n+1}^m]^{\Ad SO_{2n+1}}$ is the radical of the ideal generated by the $GO_{2n+1}$ relations with $\lambda = 1$ and the relations $\det(A_i) = 1$ (expressed in terms of $\tr(A_i), \dots, \tr(A_i^{2n+1})$).  Hence the relations between $\tr$ for $k[SO_{2n+1}^\bullet]^{\Ad SO_{2n+1}}$ are generated, up to radical, by the relations for $k[GO_{2n+1}^\bullet]^{\Ad GO_{2n+1}}$ with $\lambda = 1$ and the relation $\det = 1$ expressed in terms of $\tr$.

\begin{mydef} An \emph{(odd-dimensional) $SO_{2n+1}$-pseudocharacter of $G$ over $k$} is an $O_{2n+1}$-pseudocharacter $T: G \ra k$ which additionally satisfies the relation $\det(T)(g) = 1$ for all $g \in G$, where $\det(T)(g)$ is a polynomial in the $T(g^i)$ such that $\det(\tr)(B) = \det(B)$ for all matrices $B$.
\end{mydef}
Then the usual result holds by Corollary \ref{lafforgue_cor} and the above discussion.

\subsubsection*{Even Dimension}
When the dimension is $2n$ for some $n$, the invariant theory of $SO_{2n}$ is more complicated.  Aslaksen, Tan, and Zhu \cite[Theorem 3]{aslaksen_tan_zhu} show that for all $m$, $k[M_{2n}^m]^{\Ad SO_{2n}}$ is generated as a $k$-algebra by $\tr$ and the $n$-argument \textit{linearized Pfaffian} $\pl$, defined as the full polarization of the function
\[
\widetilde{\pf}(W) := \pf(W - W^t)
\]
where $\pf$ is the usual Pfaffian; here the inputs to $\tr$ and $\pl$ are again drawn from $\FS(\{A_1, A_1^t, \dots, A_m, A_m^t\})$.  Then as in Proposition \ref{go_n_gens}, $k[GO_{2n}^\bullet]^{\Ad SO_{2n}}$ is generated as an $\FFG$-algebra by $\tr$, $\pl$, and $\lambda$.

A result due to Rogora \cite{rogora} allows us to determine the relations between these generators up to radical, as follows.

\begin{mylemma}
The $\FFG$-ideal of relations between the generators $\tr$, $\pl$, and $\lambda$ for $k[GO_{2n}^\bullet]^{\Ad SO_{2n}}$ is the radical of the $\FFG$-ideal generated by the relations for $k[GO_{2n}^\bullet]^{\Ad GO_{2n}}$ and the relation described in \cite[Theorem 3.2]{rogora}.
\end{mylemma}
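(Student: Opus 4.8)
The plan is to follow verbatim the strategy of Proposition~\ref{prop_orthogonal_invariants} and Lemma~\ref{lem_orthogonal_matrices}, with the extra generator $\pl$ handled via the sign behaviour of the Pfaffian under the outer automorphism of $O_{2n}$. As in Section~\ref{finiteness} and in the proof of Proposition~\ref{go_n_gens}, it suffices to prove the corresponding statement about ordinary ideals for each fixed number of matrix arguments $m$, and then the passage to the $\FFG$-ideal statement is the same bookkeeping already carried out for $GO_{2n}$. So I would first extend scalars and assume $k=\overline{k}$, fix $m$, and set up the polynomial ring $S_m$ with indeterminates $T_M$ and $l_M$ for $M\in\FG(\{A_1,\dots,A_m\})$ (with the usual identifications $T_{MN}=T_{NM}$, $l_{MN}=l_Ml_N$, $l_1=1$) together with Pfaffian indeterminates for the $n$-ary linearized Pfaffian (with its evident symmetries), and the resulting surjection $\psi_m: S_m \epi k[GO_{2n}^m]^{\Ad SO_{2n}}$ sending $T_M\mapsto\tr$, $l_M\mapsto\lambda$, and the Pfaffian symbols to $\pl$. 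The goal is then to show $\ker\psi_m$ is the radical of the ideal $J_m$ generated by the $GO_{2n}$-relations and the relation of \cite[Theorem 3.2]{rogora}, the latter rewritten in the variables $T_M,l_M,\pl$ via $A_i^t\mapsto l_{A_i}A_i^{-1}$, exactly as $F_{j,n+1}$ was rewritten as $G_{j,n+1}$.

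Since $k[GO_{2n}^m]^{\Ad SO_{2n}}$ is reduced, $\ker\psi_m$ is radical, so by the Nullstellensatz it is enough to check that $J_m$ and $\ker\psi_m$ cut out the same closed subvariety of $\Spec S_m$. Using $\tr,\lambda,\pl$ as coordinate functions, $V(\ker\psi_m)$ is precisely the set of tuples $\bigl(\tr(M\{A_i\mapsto B_i\}),\,\lambda(B_i),\,\pl(\cdots\{A_i\mapsto B_i\})\bigr)$ for $B_1,\dots,B_m\in GO_{2n}(\overline{k})$, while $V(J_m)$ is the set of tuples satisfying the $GO_{2n}$-relations and Rogora's relation. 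The inclusion $V(\ker\psi_m)\subseteq V(J_m)$ is immediate by substituting actual matrices, so the entire content is the reverse inclusion.

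To prove the reverse inclusion I would fix a point of $V(J_m)$, with trace coordinates $(t_M)$, multiplier coordinates $(\ell_i)$, and Pfaffian coordinates $(p_\bullet)$. Discarding the Pfaffian coordinates, the $GO_{2n}$-relations hold for $(t_M),(\ell_i)$, so by the argument of Lemma~\ref{lem_orthogonal_matrices} --- i.e.\ by Procesi's structure theorem for orthogonal representations of $*$-algebras \cite[Theorem 15.2]{procesi}, together with the relations encoding that $NN^t$ is a scalar for $N\in GO_{2n}$, which are among the $GO_{2n}$-relations --- there exist $B_1,\dots,B_m\in GO_{2n}(\overline{k})$ realizing $(t_M)$ and $(\ell_i)$, with the representation $\overline{k}[B_1,\dots,B_m,B_1^t,\dots]\mono M_{2n}(\overline{k})$ semisimple. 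It then remains to match the Pfaffian coordinates, and here the key observation is that for $g\in O_{2n}(\overline{k})$ (the ordinary orthogonal group, $gg^t=I$) one has $\widetilde{\pf}(gWg^{-1})=\det(g)\,\widetilde{\pf}(W)$, since $g^{-1}=g^t$ and $\pf$ scales by $\det$ under congruence; hence replacing each $B_i$ by $gB_ig^{-1}$ for a fixed $g\in O_{2n}(\overline{k})$ with $\det g=-1$ preserves all traces and multipliers but multiplies every $\pl$-coordinate by $-1$. On the other hand, by \cite[Theorem 3.2]{rogora} the relations among $\tr$ and $\pl$ include identities expressing every product of two $\pl$-values as a polynomial in traces, so on $V(J_m)$ the Pfaffian coordinates are determined by the trace coordinates up to a single global sign. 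Comparing the Pfaffian coordinates produced by $(B_i)$ with $(p_\bullet)$ --- which share the same traces --- they agree up to that one global sign, so $(B_i)$ or $(gB_ig^{-1})$ realizes the chosen point; thus $V(J_m)=V(\ker\psi_m)$.

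The main obstacle I anticipate is purely a matter of reading \cite[Theorem 3.2]{rogora} carefully: one must extract from it exactly the "product of two linearized Pfaffians equals a polynomial in traces'' identities in the strength needed, namely enough of them to pin the $\pl$-coordinates down to a single global sign once all traces are fixed. One must also confirm that the rewriting $A_i^t\mapsto l_{A_i}A_i^{-1}$ loses no relations up to radical, but this is the same argument already used to pass from $F_{j,n+1}$ to $G_{j,n+1}$ and raises nothing new. Finally, the apparent degenerate case --- all $\pl$-coordinates of the chosen point vanishing --- is in fact automatic rather than an obstacle, since the matrices $B_i$ we construct have the same traces and hence, by those same quadratic relations, vanishing $\pl$-values as well, so no sign correction is needed there.
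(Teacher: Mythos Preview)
Your approach is correct but takes a genuinely different route from the paper's. You follow the geometric template of Proposition~\ref{prop_orthogonal_invariants}: pass to $\overline{k}$, invoke the Nullstellensatz, and show equality of the two varieties by first constructing matrices $B_i\in GO_{2n}(\overline{k})$ realizing the trace and multiplier coordinates (via the $GO_{2n}$ case already established), and then using Rogora's quadratic relations to see that the Pfaffian coordinates are pinned down by the traces up to a single global sign, which you correct if necessary by conjugating by some $g\in O_{2n}\setminus SO_{2n}$. The paper instead gives a short, purely algebraic argument that never leaves the free $\FFG$-algebra: it observes that conjugation by $O_{2n}\setminus SO_{2n}$ fixes $\tr,\lambda$ and negates $\pl$, so any relation $R$ splits as $R_e+R_o$ into monomials with an even or odd number of $\pl$-factors, and each of $R_e,R_o$ is itself a relation; then $R_e$ is handled by using Rogora's identity to replace each pair of Pfaffians by a polynomial in traces, reducing to a $GO_{2n}$-relation, while $R_o$ is handled by noting that $R_o^2$ is even and hence already in the ideal, so $R_o$ lies in the radical.

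Both arguments rest on the same two ingredients --- Rogora's product-of-Pfaffians identity and the sign behaviour of $\pl$ under the outer automorphism --- but the paper's version is shorter and avoids re-invoking Procesi's structure theorem and the Nullstellensatz. Your version has the virtue of making the geometry explicit (the two $SO_{2n}$-orbits inside an $O_{2n}$-orbit appear as the sign ambiguity), and it parallels the earlier $GO_n$ argument more closely. The one point you flag yourself --- that Rogora's theorem really does express \emph{every} product $\pl\cdot\pl$ as a trace polynomial, which is what forces the single global sign --- is indeed the crux, and it is exactly the same fact the paper uses when it says ``replacing each pair of linearized Pfaffians with a polynomial in traces.''
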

\begin{proof}
Let $R$ be a polynomial in terms of the given generators (i.e., in terms of their images under the internal morphisms in the free $\FFG$-algebra) which maps to 0 in $k[GO_{2n}^\bullet]^{\Ad SO_{2n}}$.  Note that conjugating all inputs to $R$ by an element of $O_{2n}(k) \setminus SO_{2n}(k)$ preserves the value of any generator $\tr(M)$ or $\lambda(M)$ while negating the value of any generator $\pl(M_1, \dots, M_{n})$.  Thus conjugating all inputs of any monomial in $R$ sends that monomial to either itself or its negation; we call the monomial ``even'' in the former case and ``odd'' in the latter case.  Let $R_e$ and $R_o$ be the sums of all even and odd monomials in $R$, respectively.  Then $R_e$ and $-R_0$ are mapped to the same image in $k[GO_{2n}^\bullet]^{\Ad SO_{2n}}$.  Then conjugating all of their image's inputs by an element of $O_{2n}(k) \setminus SO_{2n}(k)$, we see that $R_e$ and $R_0$ also map to the same image in $k[GO_{2n}^\bullet]^{\Ad SO_{2n}}$.  Hence $R_e$ and $R_o$ both map to 0, so that they are both in the $\FFG$-ideal of relations.

It now suffices to show that the even and odd relations are in the given $\FFG$-ideal.  If $R_e$ is an even relation, then each of its monomials consists of traces, lambdas, and pairs of linearized Pfaffians.  After replacing each pair of linearized Pfaffians with a polynomial in traces using the relations described in \cite[Theorem 3.2]{rogora}, we get a polynomial in terms of traces and lambdas which is a $GO_{2n}$-invariant.  Hence $R_e$ is in the given $\FFG$-ideal.  Next, if $R_o$ is an odd relation, then $R_o^2$ is an even relation, hence is in the given $\FFG$-ideal.  Then $R_o$ is in the radical of the given $\FFG$-ideal.
\end{proof}

Then as in the odd dimension case, restricting to $k[SO_{2n}^\bullet]^{\Ad SO_{2n}}$, we find that $k[SO_{2n}^\bullet]^{\Ad SO_{2n}}$ is generated as an $\FFG$-algebra by $\tr$ and $\pl$, and the relations between these generators are generated, up to radical, by the relations for $k[GO_{2n}^\bullet]^{\Ad GO_{2n}}$ with $\lambda = 1$, the relation described in \cite[Theorem 3.2]{rogora}, and the relation $\det = 1$ expressed in terms of $\tr$.

\begin{mydef} An \emph{(even-dimensional) $SO_{2n}$-pseudocharacter of $G$ over $k$} is a pair of functions $T: G \ra k$, $P: G^{n} \ra k$, such that
\begin{itemize}
  \item $T$ is an $O_{2n}$-pseudocharacter of $G$ over $k$
  \item For all $g \in G$, $\det(T)(g) = 1$
  \item For all $g_1, \dots, g_{n}, h_1, \dots, h_{n}$, $P(g_1, \dots, g_{n})P(h_1, \dots, h_{n})$ satisfies the relation in \cite[Theorem 3.2]{rogora} with $P$ in place of $Q$ and $T$ in place of $\tr$.
\end{itemize}
\end{mydef}

Then we have the usual result.

\section{Application: Conjugacy vs.\ Element-Conjugacy}\label{conjugacy}
In this section, we use our pseudocharacters to answer questions about conjugacy vs.\ element-conjugacy of group homomorphisms $\Gamma \ra H$ for $H$ a linear algebraic group, following Larsen \cite{larsen_1, larsen_2}.

\begin{mydef}
Fix a linear algebraic group $H$ over a field $k$, and let $\Gamma$ be another group.  Two homomorphisms $\rho_1, \rho_2: \Gamma \ra H(k)$ are called \emph{globally conjugate} if there exists $h \in H(k)$ such that $\rho_1 = h\rho_2h^{-1}$.  They are called \emph{element-conjugate} if for all $\gamma \in \Gamma$, there exists $h_\gamma \in H(k)$ such that $\rho_1(\gamma) = h_\gamma\rho_2(\gamma)h_\gamma^{-1}$.
\end{mydef}

The ``conjugacy vs.\ element-conjugacy'' question for $H(k)$ asks whether or not element-conjugate semisimple homomorphisms $\Gamma \ra H(k)$ are automatically globally conjugate.

\begin{mydef}
A linear algebraic group $H(k)$ is \emph{acceptable} if element-conjugacy implies global conjugacy for all semisimple representations of arbitrary groups $\Gamma$.  We call $H(k)$ \emph{finite-acceptable} if element-conjugacy implies global conjugacy for all finite groups $\Gamma$, and \emph{compact-acceptable} if element-conjugacy implies conjugacy for all continuous semisimple representations of compact groups $\Gamma$.
\end{mydef}

In \cite{larsen_1, larsen_2}, Larsen mostly classifies the complex and compact simple Lie groups as finite-acceptable or finite-unacceptable (which implies unacceptable).  Recent results by Fang, Han, and Sun \cite{fang_han_sun} show that $GL_n(\C)$, $O_n(\C)$, $Sp_{2n}(\C)$, and their real compact forms are in fact compact-acceptable.

In this section, we give a simple sufficient condition for the acceptability of a connected reductive group $H$ over an algebraically closed field $k$, in terms of the $\FFG$-algebra $k[H^\bullet]^{\Ad H}$.  We then use this condition and the results of Section \ref{examples} to immediately show that $GO_n(\C)$, $O_n(\C)$, $GSp_{2n}(\C)$, and $Sp_{2n}(\C)$ are acceptable (not just finite- or compact-acceptable).  By \cite[Proposition 1.7]{larsen_1}, this also implies that the maximal compact subgroups of these groups are compact-acceptable, a result which was proviously known only for $O_n(\R)$ and $Sp_{2n}(\R)$.

We also use our pseudocharacters for $SO_{2n}$ to give a criterion for when a semisimple representation $\rho: \Gamma \ra SO_{2n}(k)$ is a counterexample to acceptability for $SO_{2n}(k)$.  We then construct a counterexample to acceptability for $SO_{2n}(\C)$ ($n \ge 3$) for the domain group $\Gamma = \Z/4\Z \times \Z/4\Z$; this gives a simpler example than the one in \cite[Proposition 3.8]{larsen_1}, and it additionally shows that $SO_6(\C)$ is unacceptable, a result which was not previously known.

\subsection{General Principles}
Let $H$ be a linear algebraic group.  Suppose that $H$ has pseudocharacters consisting of one-argument functions only.  More formally, let $k$ be an algebraically closed field of characteristic 0, and suppose that there exist invariants $f_1, \dots, f_n \in k[H]^{\Ad H}$ such that for any group $\Gamma$, the map $\rho \mapsto (f_1(\rho), \dots, f_n(\rho))$ induces a bijection between
\[
\{\mbox{$H(k)$-conjugacy classes of semisimple representations $\rho: \Gamma \ra H(k)$}\}
\]
and
\[
\{\mbox{maps $F_1, \dots, F_n: \Gamma \ra k$ satisfying certain fixed relations}\}.
\]
Then $H(k)$ is acceptable: indeed, if $\rho_1, \rho_2: \Gamma \ra H(k)$ are semisimple element-conjugate representations, then for all $\gamma \in \Gamma$, we have $f_i(\rho_1(\gamma)) = f_i(\rho_2(\gamma))$ for $1 \le i \le n$, so $\rho_1$ and $\rho_2$ have the same $H$-pseudocharacter, hence they are conjugate.

When $H$ is connected reductive (or, more generally, when the conclusion of Corollary \ref{lafforgue_cor} holds for $H$, such as for $H = O_n$), we can restate this result as follows.

\begin{mythm}
Let $H$ be an algebraic group over an algebraically closed field $k$ of characteristic 0 such that Corollary \ref{lafforgue_cor} holds for $H$ (e.g., $H$ is connected reductive).  Suppose that $k[H^\bullet]^{\Ad H}$ is generated by $k[H]^{\Ad H}$ as an $\FFG$-algebra.  Then $H(k)$ is acceptable.
\end{mythm}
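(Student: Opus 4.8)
The plan is to deduce this from the informal argument sketched immediately before the theorem, so the main task is to make that argument rigorous using the machinery already developed. Suppose $k[H^\bullet]^{\Ad H}$ is generated as an $\FFG$-algebra by $k[H]^{\Ad H} = k[H^{\mbf 1}]^{\Ad H}$. Since $k[H]^{\Ad H}$ is a finitely generated $k$-algebra (Hilbert--Nagata), pick $k$-algebra generators $f_1, \dots, f_n \in k[H]^{\Ad H}$. First I would observe that because these generate $k[H^\bullet]^{\Ad H}$ as an $\FFG$-algebra, every element of every $k[H^I]^{\Ad H}$ is a polynomial in internal-morphism images of the $f_i$; concretely, an $\FFG$-algebra morphism out of $k[H^\bullet]^{\Ad H}$ is determined by the images of $f_1, \dots, f_n$ in the degree-$\mbf 1$ component of the target.

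Now let $\rho_1, \rho_2 : \Gamma \ra H(k)$ be semisimple element-conjugate representations. By Corollary \ref{lafforgue_cor} (which holds for $H$ by hypothesis), each $\rho_j$ corresponds to an $\FFG$-algebra morphism $\Theta_j^\bullet : k[H^\bullet]^{\Ad H} \ra \Map(\Gamma^\bullet/\Ad\Gamma, k)$, characterized by $\Theta_j^{\mbf n}(f)(\gamma_1, \dots, \gamma_n) = f(\rho_j(\gamma_1), \dots, \rho_j(\gamma_n))$. The next step is to check that $\Theta_1^\bullet = \Theta_2^\bullet$. By the previous paragraph it suffices to check that $\Theta_1^{\mbf 1}(f_i) = \Theta_2^{\mbf 1}(f_i)$ for each $i$, i.e.\ that $f_i(\rho_1(\gamma)) = f_i(\rho_2(\gamma))$ for all $\gamma \in \Gamma$. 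But element-conjugacy means that for each $\gamma$ there is $h_\gamma \in H(k)$ with $\rho_1(\gamma) = h_\gamma \rho_2(\gamma) h_\gamma^{-1}$, and $f_i \in k[H]^{\Ad H}$ is conjugation-invariant, so indeed $f_i(\rho_1(\gamma)) = f_i(\rho_2(\gamma))$. Hence $\Theta_1^{\mbf 1}$ and $\Theta_2^{\mbf 1}$ agree on the $\FFG$-algebra generators, so $\Theta_1^\bullet = \Theta_2^\bullet$ as morphisms of $\FFG$-algebras. Finally, invoking the injectivity half of the bijection in Corollary \ref{lafforgue_cor}, $\rho_1$ and $\rho_2$ lie in the same $H(k)$-conjugacy class, i.e.\ they are globally conjugate; since $\rho_1, \rho_2$ were arbitrary semisimple element-conjugate representations of an arbitrary group $\Gamma$, this shows $H(k)$ is acceptable.

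The one point requiring a little care — and what I'd flag as the main obstacle — is the claim that an $\FFG$-algebra morphism out of $k[H^\bullet]^{\Ad H}$ is pinned down by the degree-$\mbf 1$ images of a set of $k$-algebra generators of $k[H]^{\Ad H}$, given only that those generators generate the whole $\FFG$-algebra. This follows from the universal property: the $\FFG$-algebra map $\bigotimes_i F_{\FFG}(1) \ra k[H^\bullet]^{\Ad H}$ sending the $i$-th free generator to $f_i$ is surjective by hypothesis, and any two $\FFG$-algebra morphisms $k[H^\bullet]^{\Ad H} \ra B^\bullet$ that agree after precomposition with a surjection must be equal; agreement after precomposition is exactly agreement on the $f_i$ in degree $\mbf 1$, since a morphism out of a free $\FFG$-algebra is determined by the images of the free generators. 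All of this is routine from the formalism of Section \ref{algebras}, so no genuinely hard step remains.
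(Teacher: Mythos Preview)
Your proof is correct and takes essentially the same approach as the paper, which offers only the informal argument immediately preceding the theorem as its proof; you have simply made that sketch rigorous via the $\FFG$-algebra formalism. One minor remark: the appeal to Hilbert--Nagata is convenient but inessential, since agreement of $\Theta_1^{\mbf 1}$ and $\Theta_2^{\mbf 1}$ on \emph{all} of $k[H]^{\Ad H}$ (not just a finite generating set) already follows from element-conjugacy, and that is enough to force $\Theta_1^\bullet = \Theta_2^\bullet$ once $k[H^\bullet]^{\Ad H}$ is generated in degree $\mbf 1$.
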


Although it would be convenient if the converse to this theorem were true, it appears to be false.  Upcoming work by Yu \cite[Theorem 4.2(1)]{yu} shows that $SO_4(\R)$ is compact-acceptable, so that $SO_4(\C)$ is finite-acceptable by \cite[Proposition 1.7]{larsen_1}, suggesting that it is also acceptable; meanwhile, from our investigations it appears that the two-argument function $\pl$ in $k[SO_4^\bullet]^{\Ad SO_4}$ is not generated by $k[SO_4]^{\Ad SO_4}$, although we have not proved this definitively.

\subsection{Element-conjugacy vs.\ Conjugacy for \texorpdfstring{$SO_{2n}$}{SO(2n)}}
Let $k$ be an algebraically closed field of characteristic 0, and let $n \ge 2$ be an integer.  We wish to characterize all pairs of semisimple representations $\rho_1, \rho_2: \Gamma \ra SO_{2n}(k)$ which are element-conjugate but not globally conjugate, at least when $\Gamma$ is torsion.  Let $\pl$ denote the linearized antisymmetrized Pfaffian (see Section \ref{so_example} above).  Our result is as follows.

\begin{myprop}
Let $\Gamma$ be a group, and let $\rho: \Gamma \ra SO_{2n}(k)$ be a semisimple representation.  If there exists a semisimple representation $\rho': \Gamma \ra SO_{2n}(k)$ which is element-conjugate but not globally conjugate to $\rho$, then:
\begin{itemize}
  \item For all $\gamma \in \Gamma$, $\det(\rho(\gamma) - \rho(\gamma)^t) = 0$
  \item There exist $\gamma_1, \dots, \gamma_n \in G$ such that $\pl(\rho(\gamma_1), \dots, \rho(\gamma_n)) \neq 0$.
\end{itemize}
If $\Gamma$ is torsion, then the converse holds as well.

When such a $\rho'$ exists, it is unique up to conjugation by $SO_{2n}(k)$, and it is given by
\[
\rho' = X\rho X^{-1}
\]
for any $X \in O_{2n}(k) \setminus SO_{2n}(k)$.
\end{myprop}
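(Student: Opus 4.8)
The plan is to leverage the explicit $SO_{2n}$-pseudocharacter theory from Section \ref{so_example} together with Larsen's characterization of how $SO_{2n}(k)$-conjugacy classes sit inside $O_{2n}(k)$-conjugacy classes. Recall that a semisimple $\rho: \Gamma \ra SO_{2n}(k)$ is determined up to $O_{2n}(k)$-conjugacy by its trace function $\tr(\rho)$ (together with the $\det = 1$ relation, which is automatic), and that a single $O_{2n}(k)$-conjugacy class of semisimple representations splits into either one or two $SO_{2n}(k)$-conjugacy classes; it splits into two precisely when no element of $O_{2n}(k) \setminus SO_{2n}(k)$ centralizes $\rho$, equivalently when the commutant algebra $\mathrm{End}_\Gamma(k^{2n})$ is not stable under the involution composed with conjugation by such an element — concretely, when the centralizer of $\rho(\Gamma)$ in $GL_{2n}(k)$ meets $O_{2n}(k)$ only inside $SO_{2n}(k)$. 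In that split case, the two classes are $\rho$ and $X\rho X^{-1}$ for any $X \in O_{2n}(k)\setminus SO_{2n}(k)$, and these are the only candidates for a $\rho'$ that is element-conjugate (hence $O_{2n}$-conjugate at each element, and in fact globally $O_{2n}$-conjugate, since they share a trace function) but not globally $SO_{2n}$-conjugate to $\rho$.

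The first bullet — $\det(\rho(\gamma) - \rho(\gamma)^t) = 0$ for all $\gamma$ — I would extract as follows. If $\rho$ and $X\rho X^{-1}$ are element-conjugate under $SO_{2n}(k)$, then for each $\gamma$ the matrix $X$ conjugates $\rho(\gamma)$ into its $SO_{2n}(k)$-orbit; combining with an element of $SO_{2n}(k)$ this means $\rho(\gamma)$ is centralized by some element of $O_{2n}(k)\setminus SO_{2n}(k)$. An element $g$ of $SO_{2n}(k)$ that is centralized by an orientation-reversing orthogonal matrix must have eigenvalue multiplicities symmetric in a way that forces $1$ or $-1$ to occur (the orientation-reversing centralizing element swaps an isotropic line-pair), and a short computation with the eigenstructure of orthogonal matrices shows this is equivalent to $\det(g - g^t) = 0$: indeed $g - g^t$ is skew-symmetric, its Pfaffian vanishes iff $g$ has $\pm 1$ as an eigenvalue (up to the relevant parity bookkeeping), which is exactly the condition for the orientation-reversing centralizer to exist. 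The second bullet is the contrapositive of the observation that $\pl(\rho(\gamma_1),\dots,\rho(\gamma_n)) = 0$ for all tuples would say the $\pl$-part of the $SO_{2n}$-pseudocharacter of $\rho$ vanishes identically; but by the pseudocharacter bijection of Section \ref{so_example}, a semisimple $\rho$ with $\pl \equiv 0$ has the same pseudocharacter as $X\rho X^{-1}$ (since $\pl$ is negated by $X$ and everything else is preserved), hence $\rho$ and $X\rho X^{-1}$ are globally $SO_{2n}$-conjugate, and then $\rho$ admits no non-conjugate element-conjugate $\rho'$.

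For the converse when $\Gamma$ is torsion, I would argue: given the two conditions, the nonvanishing of $\pl$ shows the $SO_{2n}$-pseudocharacters of $\rho$ and $X\rho X^{-1}$ differ (they have the same trace part but opposite-signed $\pl$ part, and the $\pl$ part is not identically zero), so by the pseudocharacter bijection $\rho$ and $\rho' := X\rho X^{-1}$ are not globally $SO_{2n}$-conjugate; it remains to show they are element-conjugate, which is where $\det(\rho(\gamma) - \rho(\gamma)^t) = 0$ and torsion-ness enter. For each $\gamma$, since $\rho(\gamma)$ has finite order it is a semisimple orthogonal matrix, and the vanishing Pfaffian condition forces a $\pm 1$ eigenvalue, which lets us build an orientation-reversing orthogonal matrix centralizing $\rho(\gamma)$ (reflect in a hyperplane adapted to the $\pm 1$-eigenspace, a standard construction); then $X$ times this centralizer lies in $SO_{2n}(k)$ and conjugates $\rho(\gamma)$ to $\rho'(\gamma)$. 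Finally, uniqueness of $\rho'$ up to $SO_{2n}(k)$-conjugacy follows because $\rho'$ must be $O_{2n}(k)$-conjugate to $\rho$ (it is element-conjugate, so has the same trace, hence the same $O_{2n}$-class by the $O_{2n}$-pseudocharacter theory), so $\rho'$ lies in one of the at most two $SO_{2n}$-classes inside that $O_{2n}$-class, and it is not the class of $\rho$, leaving only the class of $X\rho X^{-1}$.

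I expect the main obstacle to be the precise eigenvalue/Pfaffian bookkeeping: pinning down exactly why $\det(g - g^t) = 0$ for $g \in SO_{2n}(k)$ is equivalent to the existence of an orientation-reversing orthogonal centralizer, and doing this cleanly over a general algebraically closed field of characteristic $0$ (handling multiplicities and the structure of $g$ on its isotropic subspaces) without lapsing into case analysis. The torsion hypothesis is doing real work here — it guarantees $\rho(\gamma)$ is semisimple as a linear map, which is what makes the eigenvalue argument and the reflection construction go through; without it one would need $\rho(\gamma)$ itself to be a semisimple matrix, which need not follow from $\rho$ being a semisimple representation.
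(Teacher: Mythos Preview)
Your overall strategy is sound and matches the paper on uniqueness and on the second bullet of $(\Rightarrow)$. There is one wobble in your argument for the first bullet: having reduced to ``$\rho(\gamma)$ is centralized by some $Y \in O_{2n}(k)\setminus SO_{2n}(k)$'', your justification that this forces a $\pm 1$ eigenvalue (``the orientation-reversing centralizing element swaps an isotropic line-pair'') is not right---a centralizing $Y$ preserves each eigenspace of $\rho(\gamma)$, it does not swap them---and an eigenspace-by-eigenspace determinant count would in any case need $\rho(\gamma)$ to be semisimple as a matrix, which is not available in this direction since $\Gamma$ is not assumed torsion. The clean fix, which is essentially what the paper does, is pure oddness: for orthogonal $Y$ one has $\widetilde{\pf}(YgY^{-1}) = \pf\bigl(Y(g-g^t)Y^t\bigr) = \det(Y)\,\widetilde{\pf}(g)$, so if $Y$ centralizes $g$ with $\det Y = -1$ then $\widetilde{\pf}(g)=0$ and $\det(g-g^t)=\widetilde{\pf}(g)^2=0$, with no eigenvalue or semisimplicity input needed. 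The paper phrases this as: element-conjugacy makes $\rho|_{\langle\gamma\rangle}$ and $\rho'|_{\langle\gamma\rangle}$ conjugate in $SO_{2n}$, so their $\pl$-values agree; oddness makes them negatives of each other; hence they vanish, and specializing gives $\widetilde{\pf}(\rho(\gamma))=\tfrac{1}{n!}\pl(\rho(\gamma),\dots,\rho(\gamma))=0$.

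For the converse your route is genuinely different from the paper's. You build, for each $\gamma$, an explicit orientation-reversing reflection centralizing $\rho(\gamma)$ (torsion $\Rightarrow$ $\rho(\gamma)$ semisimple; $\det(\rho(\gamma)-\rho(\gamma)^t)=0$ $\Rightarrow$ a $\pm 1$ eigenvalue; the corresponding eigenspace is nondegenerate for a semisimple orthogonal operator, so reflect through the hyperplane orthogonal to a non-isotropic eigenvector), and this directly exhibits $SO_{2n}$-conjugacy of $\rho(\gamma)$ and $\rho'(\gamma)$. The paper instead stays inside the pseudocharacter framework: it shows $\pl(\rho(\gamma^{m_1}),\dots,\rho(\gamma^{m_n}))=0$ for all $m_i$ via the divisibility $\rho(\gamma)-\rho(\gamma)^{-1} \mid \rho(\gamma)^{m}-\rho(\gamma)^{-m}$ in $k[\rho(\gamma),\rho(\gamma)^{-1}]$ (so every $k$-linear combination of the $\rho(\gamma^{m_i})-\rho(\gamma^{m_i})^t$ is a multiple of the singular matrix $\rho(\gamma)-\rho(\gamma)^{-1}$ and hence has vanishing Pfaffian), then invokes Maschke plus the $SO_{2n}$-pseudocharacter bijection on $\langle\gamma\rangle$. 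Your reflection construction is more geometric and sidesteps the divisibility trick entirely; the paper's argument is more uniform with the surrounding invariant-theoretic machinery and yields along the way the slightly stronger intermediate fact that \emph{all} $\pl$-values on each cyclic subgroup vanish.
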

\begin{proof}
\textbf{Uniqueness:} Let $\rho'$ be element-conjugate but not globally conjugate to $\rho$ in $SO_{2n}$.  Then $\rho$ and $\rho'$ are element-conjugate in $O_{2n}$, hence globally conjugate in $O_{2n}$.  Thus there is an $X \in O_{2n}(k)$ such that $\rho' = X\rho X^{-1}$, and necessarily $X \notin SO_{2n}(k)$.  Since $SO_{2n}(k)$ has index 2 in $O_{2n}(k)$, any other choice of $X$ gives a representation which is globally conjugate to $\rho'$ in $SO_{2n}(k)$.

\textbf{Existence, $(\Longrightarrow)$:} Let $\rho'$ be a semisimple representation which is element-conjugate but not globally conjugate to $\rho$.  By the uniqueness proof, we can write $\rho' = X\rho X^{-1}$ as above.  Let $\pl$ denote the linearized antisymmetrized Pfaffian, which is an odd $n$-ary invariant in $k[SO_{2n}^\bullet]^{\Ad SO_{2n}}$.  Here ``odd'' means that
\begin{equation}\label{eq_oddness}
\pl(\rho(\gamma_1), \dots, \rho(\gamma_n)) = -\pl(X\rho(\gamma_1)X^{-1}, \dots, X\rho(\gamma_n)X^{-1}) = -\pl(\rho'(\gamma_1), \dots, \rho'(\gamma_n))
\end{equation}
for all $\gamma_1, \dots, \gamma_n \in \Gamma$.  Since $\rho$ and $\rho'$ are not globally conjugate, they must have different pseudocharacters, and since $\tr(\rho) = \tr(\rho')$ by element-conjugacy, there must exist $\gamma_1, \dots, \gamma_n \in \Gamma$ such that
\[
\pl(\rho(\gamma_1), \dots, \rho(\gamma_n)) \neq \pl(\rho'(\gamma_1), \dots, \rho'(\gamma_n)).
\]
Then by (\ref{eq_oddness}), $\pl(\rho(\gamma_1), \dots, \rho(\gamma_n)) \neq 0$.

Next, since $\rho$ and $\rho'$ are element-conjugate, $\rho|_{\langle \gamma \rangle}$ is conjugate to $\rho'|_{\langle \gamma \rangle}$ in $SO_{2n}$ for each $\gamma \in \Gamma$, so
\[
\pl(\rho(\gamma^{m_1}), \dots, \rho(\gamma^{m_n})) = \pl(\rho'(\gamma^{m_1}), \dots, \rho(\gamma^{m_n}))
\]
for all $\gamma \in \Gamma$ and $m_1, \dots, m_n \in \Z$.  Then by (\ref{eq_oddness}), $\pl(\rho(\gamma^{m_1}), \dots, \rho(\gamma^{m_n})) = 0$.  In particular, $\widetilde{\pf}(\rho(\gamma)) = \frac{1}{n!}\pl(\rho(\gamma), \dots, \rho(\gamma)) = 0$ for all $\gamma \in \Gamma$.  Hence
\[
\det(\rho(\gamma) - \rho(\gamma)^t) = \pf(\rho(\gamma) - \rho(\gamma)^t)^2 = \widetilde{\pf}(\rho(\gamma))^2 = 0.
\]

\textbf{Existence, $(\Longleftarrow)$:} Let $X \in O_{2n}(k) \setminus SO_{2n}(k)$, and set $\rho'(\gamma) = X\rho(\gamma)X^{-1}$.  Then by assumption, there exist $\gamma_1, \dots, \gamma_n$ such that
\[
\pl(\rho(\gamma_1), \dots, \rho(\gamma_n)) \neq -\pl(\rho(\gamma_1), \dots, \rho(\gamma_n)) = \pl(\rho'(\gamma_1), \dots, \rho'(\gamma_n)),
\]
so $\rho$ and $\rho'$ are not globally conjugate.

Now fix $\gamma \in \Gamma$.  Since $\Gamma$ is torsion, Maschke's Theorem implies that both $\rho|_{\langle \gamma \rangle}$ and $\rho'|_{\langle \gamma \rangle}$ are semisimple.  Thus to show that $\rho|_{\langle \gamma \rangle}$ and $\rho'|_{\langle \gamma \rangle}$ are conjugate in $SO_{2n}$, it suffices to show that they have the same $SO_{2n}$-pseudocharacters.  They have the same traces because $\rho$ and $\rho'$ are conjugate in $O_{2n}$.  To show that they have the same linearized Pfaffians, we must show
\[
\pl(\rho(\gamma^{m_1}), \dots, \rho(\gamma^{m_n})) = 0
\]
for all $m_1, \dots, m_n \in \Z$, since the corresponding Pfaffian for $\rho'$ is the negative of that for $\rho$.  By definition, $\pl(\rho(\gamma^{m_1}), \dots, \rho(\gamma^{m_n}))$ is the multilinear term in
\[
\widetilde{\pf}\left(t_1\rho(\gamma^{m_1}) + \dots + t_n\rho(\gamma^{m_n})\right) = \pf\left(t_1(\rho(\gamma^{m_1}) - \rho(\gamma^{m_1})^t) + \dots + t_n(\rho(\gamma^{m_n}) - \rho(\gamma^{m_n})^t\right).
\]
But $\rho(\gamma) - \rho(\gamma)^t = \rho(\gamma) - \rho(\gamma)^{-1}$ divides $\rho(\gamma)^{m_i} - \rho(\gamma)^{-m_i} = \rho(\gamma^{m_i}) - \rho(\gamma^{m_i})^t$ for all $i$, so the assumption $\det(\rho(\gamma) - \rho(\gamma)^t) = 0$ implies that
\[
\det\left(t_1(\rho(\gamma^{m_1}) - \rho(\gamma^{m_1})^t) + \dots + t_n(\rho(\gamma^{m_n}) - \rho(\gamma^{m_n})^t)\right) = 0.
\]
Hence taking the square root, the Pfaffian is zero as well for all values of $t_1, \dots, t_n$.  Thus $\pl(\rho(\gamma^{m_1}), \dots, \rho(\gamma^{m_n})) = 0$, proving the claim.
\end{proof}

\subsection{A Finite Abelian Counterexample for \texorpdfstring{$SO_{2n}$, $n \ge 3$}{SO(2n), n >= 3}}
Let $\Gamma = \Z/4\Z \times \Z/4\Z$, with generators $(1, 0)$ and $(0, 1)$.  Let
\[
A = \begin{pmatrix} 0 & 1 \\ -1 & 0 \end{pmatrix} \in SO_2(\C).
\]
Define a homomorphism $\rho_6: \Gamma \ra SO_6(\C)$ by
\begin{align*}
\rho_6(1, 0) &= A \oplus A \oplus I, \\
\rho_6(0, 1) &= I \oplus A \oplus A.
\end{align*}
Then one can check that $\det(\rho_6(\gamma) - \rho_6(\gamma)^t) = 0$ for all $\gamma \in \Gamma$, while $\pl(\rho_6(1, 0), \rho_6(0, 1), \rho_6(0, 1)) = 16$.  Hence $\rho_6$ is a counterexample to element-conjugacy implying conjugacy for $SO_6(\C)$.

More generally, we have:
\begin{myprop}
Let $\Gamma$ and $A$ be as above.  For any $n \ge 3$, the homomorphism $\rho_{2n}: \Gamma \ra SO_{2n}(\C)$ defined by
\begin{align*}
\rho_{2n}(1, 0) &= A \oplus A \oplus I \oplus \bigoplus_{i=4}^n A, \\
\rho_{2n}(0, 1) &= I \oplus A \oplus A \oplus \bigoplus_{i=4}^n A
\end{align*}
satisfies $\det(\rho_{2n}(\gamma) - \rho_{2n}(\gamma)^t) = 0$ for all $\gamma \in \Gamma$ and $\pl(\rho_{2n}(1, 0), \rho_{2n}(0, 1), \dots, \rho_{2n}(0, 1)) \neq 0$.  Hence $\rho_{2n}$ gives a counterexample to element-conjugacy implying global conjugacy.
\end{myprop}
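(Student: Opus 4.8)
The plan is to deduce this from the preceding Proposition. Since $\Gamma=\Z/4\Z\times\Z/4\Z$ is finite, Maschke's theorem makes every representation $\Gamma\ra SO_{2n}(\C)$ semisimple, and $\Gamma$ is torsion, so the converse assertion of that Proposition applies: it suffices to verify the two displayed conditions, namely $\det(\rho_{2n}(\gamma)-\rho_{2n}(\gamma)^t)=0$ for every $\gamma$ and $\pl(\rho_{2n}(1,0),\rho_{2n}(0,1),\dots,\rho_{2n}(0,1))\neq 0$. Once these hold, the Proposition produces a semisimple $\rho'_{2n}=X\rho_{2n}X^{-1}$ (for any $X\in O_{2n}(\C)\setminus SO_{2n}(\C)$) that is element-conjugate but not globally conjugate to $\rho_{2n}$, which is the asserted counterexample.

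For the determinant condition I would first record the block form: writing an element of $\Gamma$ as $(a,b)$, one has $\rho_{2n}(a,b)=A^{a}\oplus A^{a+b}\oplus A^{b}\oplus A^{a+b}\oplus\cdots\oplus A^{a+b}$, the last portion being $n-3$ copies of $A^{a+b}$ (none when $n=3$). Because $A^{2}=-I$, the block $A^{k}$ is a symmetric matrix whenever $k$ is even, whereas $A^{k}-(A^{k})^{t}=\pm 2A$ has determinant $4$ when $k$ is odd. Hence $\rho_{2n}(\gamma)-\rho_{2n}(\gamma)^{t}$ is block diagonal, and any block whose exponent (one of $a$, $b$, $a+b$) is even is the zero block. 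For any integers $a,b$ the three residues $a,b,a+b$ cannot all be odd, so some block vanishes and $\det(\rho_{2n}(\gamma)-\rho_{2n}(\gamma)^{t})=0$. (This is exactly why three distinct exponents $a$, $a+b$, $b$ are used rather than a single one.)

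For the linearized Pfaffian I would use the description of $\pl$ from Section~\ref{so_example}: $\pl(W_{1},\dots,W_{n})$ is the coefficient of $t_{1}\cdots t_{n}$ in $\widetilde{\pf}(t_{1}W_{1}+\cdots+t_{n}W_{n})=\pf\bigl(\sum_{i}t_{i}(W_{i}-W_{i}^{t})\bigr)$. Put $W_{1}=\rho_{2n}(1,0)$, $W_{2}=\cdots=W_{n}=\rho_{2n}(0,1)$, and $s=t_{2}+\cdots+t_{n}$. A block-by-block computation, using that $\rho_{2n}(1,0)$ is symmetric on the third block and $\rho_{2n}(0,1)$ on the first, shows that $\sum_{i}t_{i}(W_{i}-W_{i}^{t})$ is block diagonal with blocks $2t_{1}A$, $2(t_{1}+s)A$, $2sA$, together with $n-3$ further blocks each equal to $2(t_{1}+s)A$. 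Since the Pfaffian of a block-diagonal skew matrix is the product of the block Pfaffians and $\pf(cA)=c$, this Pfaffian equals $2^{n}\,t_{1}\,s\,(t_{1}+s)^{n-2}$. Extracting the coefficient of $t_{1}\cdots t_{n}$: the explicit factor $t_{1}$ forces the $s^{n-2}$ term of $(t_{1}+s)^{n-2}$, leaving $2^{n}t_{1}s^{n-1}$, and the coefficient of $t_{2}\cdots t_{n}$ in $(t_{2}+\cdots+t_{n})^{n-1}$ is $(n-1)!$. Therefore $\pl(\rho_{2n}(1,0),\rho_{2n}(0,1),\dots,\rho_{2n}(0,1))=2^{n}(n-1)!\neq 0$, which for $n=3$ gives the value $16$ recorded above for $\rho_{6}$.

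All of this is elementary computation; the only delicate point is getting the normalization of the polarization $\pl$ right, but this is pinned down by the formula in Section~\ref{so_example} and cross-checked by the $n=3$ value, so I expect no genuine obstacle.
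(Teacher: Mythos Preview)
Your proof is correct and follows essentially the same approach as the paper: both arguments reduce the determinant condition to the observation that among the exponents $a,\,a+b,\,b$ one is always even (so some $2\times 2$ block of $\rho_{2n}(\gamma)-\rho_{2n}(\gamma)^t$ vanishes), and both compute $\pl$ by factoring the Pfaffian of the block-diagonal skew matrix as a product of $2\times2$ Pfaffians. The only cosmetic difference is that the paper first extracts the multilinear coefficient into the permutation sum $\pl(C_1,\dots,C_n)=\sum_{\sigma\in S_n}\prod_i\pf(C_{\sigma(i)}^{(i)}-(C_{\sigma(i)}^{(i)})^t)$ and then argues positivity (the identity permutation contributes $2^n$), whereas you exploit the repetition $W_2=\cdots=W_n$ to collapse the polynomial to $2^{n}t_1s(t_1+s)^{n-2}$ and read off the exact value $2^{n}(n-1)!$; both computations are equivalent and yield the same number.
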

\begin{proof}
Let $\gamma \in \Gamma$, and write $\rho(\gamma) = \bigoplus_{i=1}^n B^{(i)}$.  We have
\begin{align*}
\det\left( \rho(\gamma) - \rho(\gamma)^t \right)
&= \det\left( \bigoplus_{i=1}^n (B^{(i)} - (B^{(i)})^t) \right) \\
&= \prod_{i=1}^n \det(B^{(i)} - (B^{(i)})^t).
\end{align*}
Hence to show $\det(\rho(\gamma) - \rho(\gamma)^t) = 0$, it suffices to prove that some $2\times 2$ diagonal block $B^{(i)}$ of $\rho(\gamma)$ satisfies $\det(B^{(i)} - (B^{(i)})^t) = 0$.  But one can check that for all $\gamma \in \Gamma$, one of the first three $2 \times 2$ diagonal blocks is a symmetric matrix.

Next, recall that for matrices $C_1, \dots, C_n$, $\pl(C_1, \dots, C_n)$ is defined to be the coefficient of $t_1 \cdots t_n$ in $\pf(t_1(C_1 - C_1^t) + \dots + t_n(C_n - C_n^t))$.  Letting each $C_j = \bigoplus_{i=1}^n C_j^{(i)}$ for some $2 \times 2$ matrices $C_j^{(i)}$, we have
\[
\pf(t_1(C_1 - C_1^t) + \dots + t_n(C_n - C_n^t))
= \prod_{i=1}^n \pf(t_1(C_1^{(i)} - (C_1^{(i)})^t) + \dots + t_1(C_n^{(i)} - (C_n^{(i)})^t).
\]
Now $\pf$ is a linear function of $2 \times 2$ antisymmetric matrices, so this equals
\[
\prod_{i=1}^n \sum_{j=1}^n t_j \pf(C_j^{(i)} - (C_j^{(i)})^t).
\]
Taking the coefficient of $t_1 \cdots t_n$ in this formula, we find that
\[
\pl(C_1, \dots, C_n) = \sum_{\sigma \in S_n} \prod_{i=1}^n \pf(C_{\sigma(i)}^{(i)} - (C_{\sigma(i)}^{(i)})^t).
\]

Finally, note that $\pf(A - A^t) = 2$ and $\pf(I - I^t) = 0$.  Thus
\[
\pl\left(D_1 := \rho_{2n}(1, 0), D_2 := \rho_{2n}(0, 1), \dots, D_n := \rho_{2n}(0, 1)\right)
\]
will be positive so long as for some $\sigma \in S_n$, for all $i$, $D_{\sigma(i)}^{(i)} = A$.  Taking $\sigma$ to be the identity permutation works.
\end{proof}

\begin{mycor}
For all $n \ge 3$ and all odd primes $p$ such that $p \equiv 1 \pmod{4}$, there is a continuous semisimple representation $\rho: \Gal(\overline{\Q_p}/\Q_p) \ra SO_{2n}(\C)$ which is a counterexample to acceptability.
\end{mycor}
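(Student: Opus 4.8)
The plan is to realize the finite abelian group $\Gamma_0 := \Z/4\Z \times \Z/4\Z$ as a continuous quotient of $G := \Gal(\overline{\Q_p}/\Q_p)$ and to pull back the counterexample $\rho_{2n}\colon \Gamma_0 \ra SO_{2n}(\C)$ constructed above along the resulting surjection.

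First I would construct a continuous surjection $\pi\colon G \epi \Gamma_0$. Let $F/\Q_p$ be the unramified extension of degree $4$, so that $\Gal(F/\Q_p) \cong \Z/4\Z$. Since $p \equiv 1 \pmod 4$, we have $4 \mid p-1 = [\Q_p(\mu_p):\Q_p]$, so the totally (tamely) ramified abelian extension $\Q_p(\mu_p)/\Q_p$ contains a subextension $E/\Q_p$ with $\Gal(E/\Q_p) \cong \Z/4\Z$. Because $F$ is unramified and $E$ is totally ramified, $F \cap E = \Q_p$, and since both are abelian over $\Q_p$ this gives $\Gal(FE/\Q_p) \cong \Z/4\Z \times \Z/4\Z = \Gamma_0$; let $\pi$ be the composite of the restriction map $G \epi \Gal(FE/\Q_p)$ with this identification. (Conceptually, by local class field theory $G^{\mathrm{ab}}$ is the profinite completion of $\Q_p^\times \cong \Z \times \Z/(p-1)\Z \times \Z_p$, which admits a surjection onto $(\Z/4\Z)^2$ precisely when $4 \mid p-1$.) The kernel of $\pi$ is $\Gal(\overline{\Q_p}/FE)$, which is open since $FE/\Q_p$ is finite, so $\pi$ is continuous.

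Next I would set $\rho := \rho_{2n} \circ \pi\colon G \ra SO_{2n}(\C)$. This is continuous (the composite of the continuous $\pi$ with a homomorphism out of a finite discrete group) and has finite image, hence is semisimple. By the two preceding propositions (the characterization of element-conjugate-but-not-globally-conjugate pairs for $SO_{2n}$, and the construction of $\rho_{2n}$), there is $X \in O_{2n}(\C) \setminus SO_{2n}(\C)$ for which $\rho_{2n}' := X\rho_{2n}X^{-1}$ is semisimple, element-conjugate to $\rho_{2n}$, but not globally conjugate to it in $SO_{2n}(\C)$. Put $\rho' := \rho_{2n}' \circ \pi = X\rho X^{-1}$. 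For each $g \in G$ the matrices $\rho(g)$ and $\rho'(g)$ are the images of $\pi(g) \in \Gamma_0$ under $\rho_{2n}$ and $\rho_{2n}'$ respectively, hence are $SO_{2n}(\C)$-conjugate, so $\rho$ and $\rho'$ are element-conjugate; and $\rho'$, being conjugate to $\rho$, is semisimple. Finally, if $\rho' = Y\rho Y^{-1}$ for some $Y \in SO_{2n}(\C)$, then surjectivity of $\pi$ forces $\rho_{2n}' = Y\rho_{2n}Y^{-1}$ (evaluate at preimages of the elements of $\Gamma_0$), contradicting the choice of $\rho_{2n}'$. Hence $\rho$ and $\rho'$ are not globally conjugate, so $\rho$ is a counterexample to acceptability for $SO_{2n}(\C)$.

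The one point requiring care is that the $SO_{2n}$ proposition producing $\rho_{2n}'$ assumes the source group is torsion, whereas $G$ is not; this is exactly why I would apply that proposition on the finite quotient $\Gamma_0$ and then transport the conclusion along $\pi$, rather than work with $G$ directly. Apart from this bookkeeping, the only real input is the (standard) local class field theory computation that identifies $p \equiv 1 \pmod 4$ as the condition guaranteeing a $(\Z/4\Z)^2$-quotient of $G$.
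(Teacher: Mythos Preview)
Your proof is correct and follows the same strategy as the paper: realize $\Z/4\Z \times \Z/4\Z$ as a quotient of $\Gal(\overline{\Q_p}/\Q_p)$ and pull back the finite-group counterexample $\rho_{2n}$. The only difference is in how the $(\Z/4\Z)^2$-extension is built: the paper invokes Kummer theory (noting $\mu_4 \subset \Q_p$ and taking $p$ together with a lift of a generator of $\F_p^\times/(\F_p^\times)^4$ inside $\Q_p^\times/(\Q_p^\times)^4$), whereas you take the compositum of the degree-$4$ unramified extension with a degree-$4$ totally ramified subextension of $\Q_p(\mu_p)$. Both constructions are standard and equally valid; your version has the small advantage of making the linear disjointness transparent via ramification, and you also spell out more carefully than the paper why the pullback along a surjection preserves the element-conjugate-but-not-globally-conjugate property.
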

\begin{proof}
We need merely construct a Galois extension $K$ of $\Q_p$ with $\Gal(K / \Q_p) \cong \Z/4\Z \times \Z/4\Z$.  This is easily done using Kummer theory, since by assumption, $\mu_4 \subset \Q_p$, and $p$ together with a lift of some generator for $\F_p^\times / (\F_p^\times)^4$ generate a subgroup of $\Q_p^\times / (\Q_p^\times)^4$ isomorphic to $\Z/4\Z \times \Z/4\Z$.
\end{proof}

\noindent{\bf Acknowledgments:} I would like to heartily thank Xinwen Zhu for proposing the idea for this project and for mentoring me throughout it.

\bibliography{pseudocharacters_v2_bib}{}
\bibliographystyle{plain}

\end{document}